\newtheorem{theorem}{Theorem}[section]
\newtheorem{proposition}[theorem]{Proposition}
\newtheorem{lemma}[theorem]{Lemma}
\newtheorem{corollary}[theorem]{Corollary}
\theoremstyle{definition}
\newtheorem{definition}[theorem]{Definition}
\theoremstyle{remark}
\newtheorem{remark}[theorem]{Remark}
\numberwithin{equation}{section}
\newcommand{\be}{\begin{equation}}
\newcommand{\ee}{\end{equation}}
\newcommand{\bbC}{{\mathbb C}}
\newcommand{\bbR}{{\mathbb R}}
\newcommand{\calU}{{\mathcal U}}
\newcommand{\calL}{{\mathcal L}}
\newcommand{\calP}{{\mathcal P}}
\newcommand{\calH}{{\mathcal H}}
\newcommand{\calS}{{\mathcal S}}
\newcommand{\calN}{{\mathcal N}}
\newcommand{\frakh}{{\mathfrak h}}
\newcommand{\frakp}{{\mathfrak p}}
\newcommand{\inner}[2]{\langle#1,#2\rangle}
\newcommand{\norm}[1]{\lVert#1\rVert}
\newcommand{\PB}[2]{\{  #1\,,\,#2 \}}
\renewcommand{\Box}{\square}
\newcommand{\wt}{\widetilde}
\newcommand{\h}{\hbar}
\newcommand{\hmtwo}{\hbar^{-1/2}}
\newcommand{\hinv}{\hbar^{-1}}
\begin{document}

\title{Semiclassical states associated to isotropic submanifolds of phase space}
\author{V. Guillemin}
\address{Mathematics Department\\
Massachusetts Institute of Technology\\Cambridge, MA 02138}
\email{vwg@math.mit.edu}
\author{A. Uribe}
\address{Mathematics Department\\
University of Michigan\\Ann Arbor, MI 48109}
\email{uribe@umich.edu}
\author{Z. Wang}
\address{School of Mathematical Sciences\\
University of Science and Technology of China\\
Hefei, Anhui 230026, P. R. China.}
\email{wangzuoq@ustc.edu.cn}
\thanks{Z. W. supported in part by  NSF in China 11571331 and 11526212.}

\begin{abstract}
We define classes of quantum states associated to isotropic submanifolds
of cotangent bundles.  The classes are stable under the action of semiclassical
pseudo-differential operators and covariant under the action of semiclassical
Fourier integral operators.  We develop a  
symbol calculus for them; the symbols are symplectic
spinors.  We outline various applications. 
\end{abstract}

\keywords{Semiclassical analysis, Fourier integral operators
on manifolds, Hermite distributions}
\subjclass[2010]{59J40, 81Q20}
\maketitle

\centerline{ Dedicated to Louis Boutet de Monvel}

\setcounter{tocdepth}{2}
\tableofcontents

\section{Introduction}

Among the contributions to mathematics that Boutet de Monvel is 
most remembered for is his work on Hermite distributions and Toeplitz 
operators,  and the purpose of this paper is to give a semi-classical 
account of this theory, an account that is largely inspired by Boutet's 
paper \cite{Bout}  on partial differential equations with multiple 
characteristics (in which he introduces Hermite distributions for the 
first time), his paper \cite{BS} with Sj\"ostrand (in which  the classical 
theory of the Bergman projector for bounded domains in $\bbC^n$ is shown to 
have an elegant and succinct microlocal description as a Toeplitz 
operator), and the monograph, \cite{BG}, (co-written with one of the authors 
of this paper) in which the theory of Toeplitz operators is developed ab 
ovo and the spectral properties of these operators are investigated in 
detail.

In what follows $M$ will be a smooth manifold and $T^*M$ its cotangent bundle. We recall that if one is given a Lagrangian submanifold, $\Lambda$, of $T^*M$ on can associate to $\Lambda$ a space, $I(M, \Lambda)$, of oscillatory functions having the property that their semi-classical wave front sets are contained in $\Lambda$; and one can define for these functions a symbol calculus which has nice functorial properties with respect to composition by semi-classical pseudodifferential operators. (For more details, see, for instance chapter 8 of the reference \cite{GSSemiclassical}.) Our goal below will be to develop an analogue of this theory for isotropic submanifold of $T^*M$ similar to the isotropic analogue of the classical theory of Lagrangian distributions: the theory of Hermite distributions developed by Boutet and his collaborators in the references cited above. (The salient property of Hermite distribution is that their microsupports are contained in an isotropic submanifold, $\Sigma$, of $T^*M$, and this will be the case for our ``semi-classical oscillatory functions of isotropic type" as well.)

We will begin in \S 2 by defining these functions in the special case for which $\Sigma$ is a vector subspace of the zero section in $T^*\bbR^n$ and show that in this ``model" case these functions have nice compositional properties with respect to semi-classical pseudodifferential and Fourier integral operators, have a well-defined symbol calculus and satisfy analogues of the first and second order transport equations described in \cite{Gui}, \S 10. Then in \S 3, we will extend this theory to manifold setting; and, in particular, show how to describe intrinsically the manifold versions of the results in \S 2. 

Finally in \S 4 we will briefly discuss some applications of this theory to partial differential equations and several complex variable theory (applications which we intend to discuss in more detail in a projected sequel to this paper.)

\section{Local theory}

\subsection{The model class}

Let us fix a splitting $\bbR^n = \bbR^k\times\bbR^l$ of Euclidean space, together
with a splitting of the standard coordinates $x = (t, u)$.  The dual coordinates in
$T^*\bbR^n = \bbR^{2n}$ will be denoted $\xi = (\tau, \mu)$.  We denote:
\begin{equation}\label{}
Y = \bbR^k\times\{0\}\subset\bbR^n, \quad \Sigma_0 = \{(t, u=0; \xi=0)\} \subset T^*\mathbb R^n.
\end{equation}
$\Sigma_0$ is an isotropic submanifold of $T^*\bbR^n$, as it is contained in the zero section.

\begin{definition}
Let $r$ be a half-integer.  The class $I^r(\Sigma_0)$ consists of all smooth,
$\h$-dependent functions of the form
\begin{equation}\label{basicdef}
\Upsilon(t,u,\h) = a(t,\hmtwo u,\h): \bbR^n\times (0,\h_0)\to\bbC,
\end{equation}
where $a(t,u,\h)$ has an asymptotic expansion as $\h\to 0$ of the form
\begin{equation}\label{2exp}
a(t,u,\h) \sim \h^{r}\sum_{j=0}^\infty a_j(t,u)\, \h^{j/2},
\end{equation}
where, $\forall j$, $a_j(t,u)$ is 
a Schwartz function in the $u$ variable with estimates locally uniform in $t$.  
More precisely, we assume that for all $j$
\begin{equation}\label{}
\forall   \alpha, \, \beta,\, N, \forall K\subset\bbR^k\ \text{compact}, \ 
\ \exists C\ \text{such that} \ \forall (t,u)\in K\times\bbR^l,
\ |\partial^\alpha_t\partial^\beta_u a_j(t,u)| \leq C \left(1+\norm{u}\right)^{-N},
\end{equation}
and the expansion (\ref{2exp})
means that $\forall \alpha, \, \beta,\, N, \forall K\subset\bbR^k\ \text{compact},\ 
\ \exists C\ \text{such that} \ \forall (t,u)\in K\times\bbR^l,$
\begin{equation}\label{}
\left|\partial^\alpha_t\partial^\beta_u \left(a(t,u)-\sum_{j=0}^Nh^{r+j/2}a_j(t,u)\right)\right| \leq C \hbar^{r+N+1}.
\end{equation}
\end{definition}

The following is not hard to prove:
\begin{lemma}
The semiclassical wave-front set of $a\in I^r(\Sigma_0)$ is contained in $\Sigma_0$.
\end{lemma}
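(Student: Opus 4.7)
The plan is to directly verify the Fourier-transform definition of the semiclassical wavefront set, namely that $(x_0,\xi_0)\notin \wf(\Upsilon)$ iff there is a cutoff $\chi\in C_c^\infty$ with $\chi(x_0)\neq 0$ and a neighborhood $V$ of $\xi_0$ on which $\calF_\h(\chi\Upsilon)(\xi)=O(\h^\infty)$ uniformly. I would split the proof into two cases depending on whether the base point $x_0=(t_0,u_0)$ lies in the projection $Y=\{u=0\}$ of $\Sigma_0$ or not.

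\emph{Case 1: $u_0\neq 0$.} Here one uses only the Schwartz estimates on the $a_j$. Pick a product cutoff $\chi(t)\psi(u)$ with $\psi$ supported in $\{\,|u|\geq c\,\}$ for some $c>0$. On $\operatorname{supp}\psi$ one has $|u/\hhalf|\geq c\hmhalf$, so by \eqref{2exp} and the Schwartz estimate with $N$ arbitrary,
\[
|\chi(t)\psi(u)\,\Upsilon(t,u,\h)|\leq C_N\,\h^{r+N/2}|u|^{-N},
\]
and the analogous bound holds for all derivatives. Hence $\chi\psi\Upsilon=O(\h^\infty)$ in every Schwartz seminorm, and its semiclassical Fourier transform is $O(\h^\infty)$ uniformly in $\xi$. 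So no such $(x_0,\xi_0)$ lies in $\wf(\Upsilon)$.

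\emph{Case 2: $u_0=0$ but $\xi_0=(\tau_0,\mu_0)\neq 0$.} Pick a cutoff $\chi(t,u)$ supported in a small neighborhood of $(t_0,0)$. Making the change of variables $u=\hhalf v$ in the semiclassical Fourier integral yields
\[
\calF_\h(\chi\Upsilon)(\tau,\mu)=\h^{-k/2}\int e^{-it\cdot\tau/\h}\,e^{-iv\cdot\mu/\hhalf}\,\chi(t,\hhalf v)\,a(t,v,\h)\,dt\,dv.
\]
Since $a$ is Schwartz in $v$ uniformly on compact $t$-sets (and the cutoff is bounded), this integral converges absolutely with all $v$-derivatives. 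If $\tau_0\neq 0$, on a neighborhood $V$ where $|\tau|\geq c>0$, I integrate by parts $N$ times in $t$ using $L=(i\h/|\tau|^2)\,\tau\cdot\partial_t$; each application gains a factor of $\h$, and the $t$-derivatives fall on either $\chi$ or the $a_j$, all of which remain Schwartz in $v$. If $\tau_0=0$ but $\mu_0\neq 0$, I integrate by parts $N$ times in $v$ using $L'=(i\hhalf/|\mu|^2)\,\mu\cdot\partial_v$; each iteration gains a factor $\hhalf$, while a $v$-derivative falling on $\chi(t,\hhalf v)$ produces an additional $\hhalf$ (strictly improving the estimate), and a $v$-derivative on $a$ preserves the Schwartz property. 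In both subcases the resulting integral is $O(\h^\infty)$ uniformly on $V$.

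The only step that requires care is verifying that the asymptotic expansion \eqref{2exp} behaves well under the substitution $u=\hhalf v$ and that the remainder estimates survive the integrations by parts; this is a routine consequence of the uniform-in-$t$ Schwartz bounds stipulated in the definition, so there is no genuine obstacle. Combining the two cases shows $\wf(\Upsilon)\subset\Sigma_0$.
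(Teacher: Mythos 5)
The paper offers no proof of this lemma at all — it says only ``the following is not hard to prove'' — so there is no argument of the authors to compare against. Your proposal is a correct and complete direct verification along the expected lines: for base points with $u_0\neq 0$ you exploit the rapid decay of the profiles $a_j$, and for $u_0=0$ with $\xi_0\neq 0$ you rescale $u=\hhalf v$ and integrate by parts in $t$ (gaining $\h$ per step) or in $v$ (gaining $\hhalf$ per step), splitting according to whether $\tau_0\neq 0$ or $\mu_0\neq 0$. Both cases together cover the complement of $\Sigma_0$.

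One small point worth making explicit when writing this up: as literally stated, the definition of $I^r(\Sigma_0)$ only asserts that the remainder $a-\sum_{j\le N}\h^{r+j/2}a_j$ is bounded by $C\,\h^{r+N+1}$ uniformly on $K\times\bbR^l$; it is \emph{not} asserted to be Schwartz in $u$. So the phrase ``$a$ is Schwartz in $v$ uniformly on compact $t$-sets'' is not directly granted by the definition. This does not affect the conclusion, because after the substitution $u=\hhalf v$ the cutoff $\chi(t,\hhalf v)$ confines $v$ to a ball of radius $O(\hmhalf)$: you treat the finitely many Schwartz pieces $\h^{r+j/2}a_j$ via their decay plus the integration by parts, and the remainder contributes at worst $O(\h^{r+N+1-l/2})$ from the volume of the support, which is still $O(\h^{\infty})$ for $N$ large. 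Stating this split explicitly closes the step you flagged as ``routine.''
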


\begin{definition}
Let $\Upsilon\in I^r(\Sigma_0)$ is as in the previous definition.  Then its {\em {rough} symbol}
is the function 
\begin{equation}\label{}
\begin{array}{rcc}
\sigma_\Upsilon:\Sigma_0 & \to & \calS(\bbR^l)\\
(t,0;0) & \mapsto & a_0(t, \cdot).
\end{array}
\end{equation}
where $\calS(\bbR^l)$ denotes the class of Schwartz functions on $\bbR^l$.
\end{definition}

\begin{remark}
We make the following remarks about the symbol, anticipating the general definition:
At every point $s\in\Sigma_0$ the symplectic normal space
\begin{equation}\label{}
\calN_s := \left(T_s\Sigma_0\right)^\circ/T_s\Sigma_0
\end{equation}
can be canonically identified with the $(u,\mu)$ symplectic vector space.
This vector space inherits a polarization $L_s\subset\calN_s$ (a Lagrangian subspace)
from the vertical polarization of $T^*\bbR^n$, namely $L_s = \{u=0\}$.
At every point $s\in \Sigma_0$, $\sigma_\Upsilon(s)$ is to be thought of as a Schwartz function in the quotient
$\calN_s/L_s\cong\bbR^l$.  {To obtain an invariant
version of the symbol, in the manifold case, we will have to 
``decorate" the rough symbol with half-forms, 
so that the invariant symbol will be an object of the form
\begin{equation}\label{}
(t, 0; 0)\mapsto a_0(t, \cdot)\, (\wedge^{1/2} dt)\,
(\wedge^{1/2}du),
\end{equation}
where we henceforth let
\begin{equation}\label{}
\wedge^{1/2} dt = (dt_1\wedge\cdots\wedge dt_k)^{1/2}\quad \text{and}
\quad \wedge^{1/2}du =(du_1\wedge\cdots\wedge du_l)^{1/2}.
\end{equation}
}
\end{remark}

\subsection{Invariance under $\Psi$DOs and the local transport equations}

It is clear from the definition that the class $I^r(\Sigma_0)$ is invariant under the
action of differential operators that are supported near $Y$ (or whose coefficients have
at most polynomial growth in $u$).  We now prove:

\begin{theorem}\label{psiDOInv}
Let $\Upsilon\in I^r(\Sigma_0)$ and $P$ a
semiclassical pseudodifferential operator of degree $d$
on $\bbR^n$ whose Schwartz kernel
is compactly supported in the $u$ variables. Then 
\begin{equation}\label{}
P(\Upsilon)\in I^{r+d}(\Sigma_0)\quad\text{and}\quad \sigma_{P(\Upsilon)}(s) = \frakp(s)\sigma_\Upsilon(s)
\end{equation}
where $\frakp$ is the symbol of $P$.
\end{theorem}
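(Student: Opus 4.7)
First I would realize $P$ as the oscillatory integral
\begin{equation*}
(P\Upsilon)(x)=(2\pi\h)^{-n}\int e^{i(x-y)\cdot\xi/\h}\,p(x,\xi,\h)\,\Upsilon(y)\,dy\,d\xi,\qquad p\sim\h^d\sum_{j\ge 0}p_j(x,\xi)\h^j,
\end{equation*}
with $p_0=\frakp$, and then pass to the rescaled variables that are natural for $I^r(\Sigma_0)$. Splitting $y=(t',u')$ and $\xi=(\tau,\mu)$, I would substitute $u'=\hhalf v'$ in the input, $\mu=\hhalf\nu$ in the $\mu$-integration, and evaluate the output at $u=\hhalf v$. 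The Jacobians $\h^{l/2}$ from $du'$ and from $d\mu$ combine with $(2\pi\h)^{-n}$ to yield $(2\pi)^{-n}\h^{-k}$, and the rapidly oscillating phase $(u-u')\mu/\h$ becomes the ordinary phase $(v-v')\nu$. This gives
\begin{equation*}
(P\Upsilon)(t,\hhalf v)=(2\pi)^{-n}\h^{-k}\!\!\int\! e^{i(t-t')\tau/\h+i(v-v')\nu}\,q(t,v,\tau,\nu,\h)\,a(t',v',\h)\,dt'\,dv'\,d\tau\,d\nu,
\end{equation*}
with $q(t,v,\tau,\nu,\h):=p(t,\hhalf v,\tau,\hhalf\nu,\h)$. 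Thus the conjugation $R_\h^{-1}PR_\h$, where $R_\h f(t,u):=f(t,\hmtwo u)$, is a hybrid pseudodifferential operator on $\bbR^k\times\bbR^l$ that is semiclassical in $(t,\tau)$ and ordinary in $(v,\nu)$.

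Next I would Taylor expand each $p_j(t,\hhalf v,\tau,\hhalf\nu)$ in $(u,\mu)$ at the origin and collect by powers of $\h^{1/2}$,
\begin{equation*}
q(t,v,\tau,\nu,\h)\sim\h^d\sum_{m\ge 0}\h^{m/2}q_m(t,v,\tau,\nu),\quad q_m=\sum_{2j+|\alpha|+|\beta|=m}\frac{v^\alpha\nu^\beta}{\alpha!\beta!}\,(\partial_u^\alpha\partial_\mu^\beta p_j)(t,0,\tau,0),
\end{equation*}
so in particular $q_0(t,v,\tau,\nu)=p_0(t,0,\tau,0)$, which is independent of $v$ and $\nu$. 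Each $q_m$ is a polynomial in $(v,\nu)$ of finite degree, and once $\nu^\beta$ is turned into $(-i\partial_v)^\beta$ upon integrating out $(v',\nu)$, the resulting operators preserve Schwartz decay in $v$ locally uniformly in~$t$. For the leading piece $q_0$, independence from $(v,\nu)$ collapses the $(v',\nu)$ integration and leaves the standard semiclassical PDO in $(t,\tau)$ with symbol $p_0(t,0,\tau,0)$ acting on $a(\cdot,v,\h)$ pointwise in~$v$; by the usual PDO expansion this yields $\h^d p_0(t,0,0,0)\,a(t,v,\h)$ modulo $O(\h^{d+1})$.

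Reassembling and regrouping by half powers of $\h$, one obtains an asymptotic expansion
$(P\Upsilon)(t,\hhalf v)\sim\h^{d+r}\sum_{j\ge 0}\h^{j/2}b_j(t,v)$ in which each $b_j$ lies in $\calS(\bbR^l)$ with the required estimates locally uniformly in $t$, so $P\Upsilon\in I^{r+d}(\Sigma_0)$. The leading coefficient is $b_0(t,v)=p_0(t,0,0,0)\,a_0(t,v)=\frakp(s)\,\sigma_\Upsilon(s)(v)$ at $s=(t,0;0,0)$, which gives the claimed symbolic identity. The compact support of the Schwartz kernel of $P$ in the $u$-variable is what makes the rescaled integral in $u'$ legitimate and justifies the interchange of integration and Taylor expansion; without it one would have to invoke the Schwartz-in-$v'$ decay of $a$ directly. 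The real work in the argument is the bookkeeping: controlling the remainder in the Taylor expansion of $q$ uniformly on compacts in~$t$ and tracking the Schwartz estimates on each $b_j$ through the combined action of polynomial multiplication, differentiation in~$v$, and the semiclassical expansion in~$(t,\tau)$. This is the main (but routine) obstacle, and it is precisely what certifies the $I^{r+d}(\Sigma_0)$ membership.
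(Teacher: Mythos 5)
Your proof is correct and follows essentially the same line as the paper's: write $P$ as an oscillatory integral, rescale $u$ and $\mu$ by $\hbar^{1/2}$, and read off the leading coefficient. The one cosmetic difference is that the paper also rescales $\tau\to\hbar\tau$ and absorbs the $(t',\tau)$-integration into a full Fourier transform of $a$, so that the conjugated operator becomes a clean Fourier-multiplier expression $b(t,u,\hbar)=\frac{1}{(2\pi)^n}\int e^{i(t\tau+u\mu)}p(t,\sqrt\hbar u,\hbar\tau,\sqrt\hbar\mu,\hbar)\hat a(\tau,\mu,\hbar)\,d\tau\,d\mu$, from which the identity $b(t,u,0)=p_0(t,0,0,0)a_0(t,u)$ is immediate; you instead leave the $(t,\tau)$-piece as a genuine semiclassical $\Psi$DO and invoke its standard leading-order expansion. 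Both routes land in the same place, and your version is if anything slightly more explicit about where the half-integer powers and the Schwartz estimates in $v$ come from.
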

\begin{proof}
For simplicity and for the purpose of proving this theorem as well as proving the next two theorems, we write 
\[\Upsilon(t,u,\hbar)  = \hbar^ra(t,\frac u{\sqrt \hbar}, \hbar)=\hbar^r a_0(t,\frac{u}{\sqrt \h})+\hbar^{r+\frac 12}a_1(t, \frac{u}{\sqrt \h})+\hbar^{r+1}a_2(t,\frac{u}{\sqrt \h})\]
and we assume that the symbol of $P$ is 
\[
\hbar^d \frakp(t,u,\tau,\mu) = \hbar^d p_0(t,u,\tau,\mu) + \hbar^{d+1} p_1(t,u,\tau,\mu).
\]
Then 
\[\aligned
P(\Upsilon) (t, u) &= \frac 1{(2\pi\hbar)^n} \iint e^{\frac i\h [(t-\tilde t)\cdot \tau+(y-\tilde u) \cdot \mu]} \hbar^d p(t,u,\tau,\mu, \hbar)\hbar^r a\left(\tilde t, \frac{\tilde u}{\sqrt \h}, \hbar \right)d\tilde t d\tilde u d\tau d\mu  \\
& = \frac {\h^{d+r}}{(2\pi\hbar)^{n}}\int e^{\frac i\hbar (t \cdot \tau + u \cdot \mu)} p(t, u,\tau, \mu, \hbar) \hbar^{\frac l2} \hat a(\frac{\tau}{\h}, \frac{\mu}{\sqrt \h}, \hbar) d\tau  d\mu \\
& = \frac{\hbar^{d+r}}{(2\pi\hbar)^n} \hbar^{\frac l2}\hbar^{k+\frac l2} \int e^{i(t \cdot \tau+\frac{u}{\sqrt\hbar} \cdot \mu)}p(t,u,\hbar \tau, \sqrt \hbar \mu, \h)  \hat a(\tau, \mu, \h) d\tau d\mu \\
& = \hbar^{d+r} b(t,\frac u{\sqrt h}, \hbar),
\endaligned\]
where $\hat a$ is the Fourier transform of $a$, and $b$ is the function 
\begin{equation}\label{PUpsilon}
b(t,u, \hbar) = \frac 1{(2\pi)^n} \int e^{i(t \cdot \tau+u \cdot \mu)}p(t,\sqrt \h u,\hbar \tau, \sqrt \hbar \mu,\hbar) \hat a(\tau, \mu, \hbar) d\tau d\mu.
\end{equation}
In particular, $b(t,u, 0) = p_0(t,0,0,0)a_0(t,u).$
The conclusion follows.
\end{proof}

If $\frakp|_{\Sigma_0} \equiv 0$ then $P(\Upsilon)\in I^{r+d+1/2}(\Sigma_0)$, and one can ask what its symbol is.  
\begin{theorem}\label{localTransportI}
In the situation of Theorem \ref{psiDOInv}, assume that $\frakp|_{\Sigma_0} \equiv 0$.
Then $P(\Upsilon)\in I^{r+d+1/2}(\Sigma_0)$ and
\begin{equation}\label{localTransport1}
\sigma_{P(\Upsilon)}(t,0;0)(u) = 
\left(\sum_{j=1}^l u_j\frac{\partial\frakp}{\partial u_j}(t,0;0)\right)a_0(t,u) + \frac 1i\sum_{j=1}^l\frac{\partial\frakp}{\partial \mu_j}(t,0;0)\frac{\partial a_0}{\partial u_j}(t,u).
\end{equation}
\end{theorem}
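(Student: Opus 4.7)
The plan is to pick up directly from the identity (\ref{PUpsilon}) established in the proof of Theorem \ref{psiDOInv}, namely
\[
P(\Upsilon)(t,u) = \h^{d+r}\, b(t, u/\sqrt{\h}, \h), \qquad
b(t,u,\h) = \frac{1}{(2\pi)^n}\int e^{i(t\cdot\tau+u\cdot\mu)}\, p(t,\sqrt{\h}u,\h\tau,\sqrt{\h}\mu,\h)\,\hat a(\tau,\mu,\h)\,d\tau\,d\mu,
\]
and to extract the next term in the small-$\h$ expansion of $b$ under the hypothesis $\frakp|_{\Sigma_0}\equiv 0$, equivalently $p_0(t,0,0,0)=0$.

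The key step is to Taylor-expand $p(t,\sqrt{\h}u,\h\tau,\sqrt{\h}\mu,\h)$ in powers of $\sqrt{\h}$. The constant term is $p_0(t,0,0,0)$, which vanishes by hypothesis. The next term, of order $\sqrt{\h}$, comes from differentiating $p_0$ once in the scaled variables that carry a factor of $\sqrt{\h}$, i.e.\ in $u$ and $\mu$; the $\h\tau$ variable and the subprincipal piece $\h p_1$ both contribute only at order $\h$. This yields the expansion
\[
p(t,\sqrt{\h}u,\h\tau,\sqrt{\h}\mu,\h) = \sqrt{\h}\Bigl[\sum_{j=1}^l u_j \tfrac{\partial p_0}{\partial u_j}(t,0,0,0) + \sum_{j=1}^l \mu_j \tfrac{\partial p_0}{\partial \mu_j}(t,0,0,0)\Bigr] + O(\h).
\]
Note that only the $u$- and $\mu$-derivatives appear (not $t$ or $\tau$ derivatives), because $Y=\{u=0\}$ and $\Sigma_0=\{u=0,\xi=0\}$.

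Inserting this into the integral for $b$, the leading contribution is of order $\sqrt{\h}$. The factor $u_j$ passes through the integral as multiplication by $u_j$, while the factor $\mu_j$ under the Fourier integral $\tfrac{1}{(2\pi)^n}\int e^{i(t\tau+u\mu)}(\cdot)\hat a_0\,d\tau\,d\mu$ becomes $\tfrac{1}{i}\partial_{u_j}$ acting on $a_0(t,u)$. Reading off the coefficient of $\sqrt{\h}$ produces exactly the right-hand side of (\ref{localTransport1}), and the $O(\h)$ remainder shows $P(\Upsilon)\in I^{r+d+1/2}(\Sigma_0)$.

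The only slightly delicate point, which is the main obstacle, is verifying that the $O(\h)$ remainder term really has an asymptotic expansion of the form (\ref{2exp}) with Schwartz coefficients in $u$, locally uniformly in $t$. This follows from the Schwartz decay of $\hat a$ in $(\tau,\mu)$ together with the polynomial control on derivatives of $p$ in its symbol arguments, by the same stationary-phase/Fourier argument used implicitly in the proof of Theorem \ref{psiDOInv}; iterating the Taylor expansion to arbitrary order gives the full asymptotic expansion.
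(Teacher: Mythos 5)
Your argument matches the paper's proof essentially line for line: both start from the formula (\ref{PUpsilon}), Taylor-expand $p(t,\sqrt\h u,\h\tau,\sqrt\h\mu,\h)$ to order $\sqrt\h$ using the vanishing of $p_0$ at $(t,0,0,0)$, and convert the $\mu_j$ factor into $\tfrac{1}{i}\partial_{u_j}$ via the Fourier inversion formula $\mu_j\hat a_0 = \tfrac1i\widehat{\partial_{u_j}a_0}$. The closing remark about the Schwartz/locally-uniform control of the $O(\h)$ remainder is a reasonable supplementary observation, not a departure from the paper's approach.
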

\begin{proof}
If $p_0(t,0,0,0)=0$, then  
\[
p(t, \sqrt \hbar u, \hbar \tau, \sqrt \hbar \mu, \hbar) = \hbar^{1/2}  \sum_{j=1}^l\left(  u_j \frac{\partial p_0}{\partial u_j} + \mu_j \frac{\partial p_0}{\partial \mu_j} \right)  + O(\hbar). 
\]
Therefore (\ref{PUpsilon}) becomes 
\[\aligned 
b(t,u, \hbar)  & =\hbar^{1/2}  \int e^{i(t \cdot \tau+u \cdot \mu)}
\sum_{j=1}^l\left(  u_j \frac{\partial p_0}{\partial u_j} + \mu_j \frac{\partial p_0}{\partial \mu_j}  \right)  
\hat a_0(\tau, \mu) d\tau d\mu+O(\hbar)
\\& 
=\hbar^{1/2} \int e^{i(t \cdot \tau+u \cdot \mu)}
\sum_{j=1}^l\left(  u_j \frac{\partial p_0}{\partial u_j} \hat a_0(\tau, \mu) + \frac 1i  \frac{\partial p_0}{\partial \mu_j} 
\widehat{\frac{\partial a_0}{\partial u_j}}(\tau, \mu)  \right)  d\tau d\mu+O(\hbar)
\endaligned\]
where we have used that 
$\mu_j\hat a_0(\tau, \mu) = \frac 1i \widehat{\frac{\partial a_0}{\partial u_j}}(\tau, \mu)$.
The functions $\frac{\partial p_0}{\partial u_j},   \frac{\partial p_0}{\partial \mu_j} $ are all evaluated 
at the point $(t,0,0,0)$.  The conclusion follows.
\end{proof}

\begin{remark}
The right-hand side of (\ref{localTransport1}) has a nice interpretation that carries over to the
general case.  
Since $\Sigma_0$ is isotropic, $\frakp|_{\Sigma_0}\equiv 0$
implies that the Hamilton vector field of $\frakp$ at $s\in \Sigma_0$, $\Xi_\frakp(s)$, 
belongs to the annihilator $\left(T_s\Sigma_0\right)^\circ$.  It projects to 
the symplectic normal space, $\calN_s$, and therefore defines 
a Lie algebra element of the Heisenberg group of $\calN_s$.
 (\ref{localTransport1}) is just the action of this Lie algebra element on $\sigma_\Upsilon(s)$.
\end{remark}

One can go further down still, under the following assumption:
\[
\tag{$\ast$}  
\frakp|_{\Sigma_0} \equiv 0\quad\text{and}\quad \Xi_\frakp\ \text{is tangent to}\ \Sigma_0.
\]

\begin{theorem}\label{localTransportII}
In the situation of Theorem \ref{psiDOInv}, assume further that ($\ast$) holds.
Then $P(\Upsilon)\in I^{r+d+1}(\Sigma_0)$ and its symbol is
\begin{equation}\label{localTransport2}
\aligned
\sigma_{P(\Upsilon)}(s)(u) = &
  p_1(s) a_0(t, u)+\frac 1{\sqrt{-1}} \sum  \frac{\partial p_0}{\partial \tau_j}(s) \frac{\partial a_0}{\partial t_j}(t,u) 
\\
+ \frac 12 \sum \left(   \frac{\partial^2p_0}{\partial u_i\partial u_j}(s)  a_0(t, u) u_iu_j\right.
&\left.+\frac 2{\sqrt{-1}}  \frac{\partial^2p_0}{\partial u_i\partial \mu_j}(s)\frac{\partial a_0}{\partial u_j}(t,u) u_i  -      \frac{\partial^2p_0}{\partial \mu_i\partial \mu_j}(s) \frac{\partial^2 a_0}{\partial u_i\partial u_j} (t,u) \right)
\endaligned
\end{equation}

\end{theorem}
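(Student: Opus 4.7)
The plan is to extend the Fourier-integral computation from Theorems \ref{psiDOInv} and \ref{localTransportI} one order further in $\sqrt{\h}$. First I would unpack assumption $(\ast)$ pointwise. Writing the Hamilton vector field
\begin{equation*}
\Xi_\frakp = \sum_j\left[\frac{\partial p_0}{\partial\tau_j}\partial_{t_j} + \frac{\partial p_0}{\partial\mu_j}\partial_{u_j} - \frac{\partial p_0}{\partial t_j}\partial_{\tau_j} - \frac{\partial p_0}{\partial u_j}\partial_{\mu_j}\right]
\end{equation*}
and noting that $T_s\Sigma_0$ at $s=(t,0;0)$ is spanned by the $\partial_{t_j}$, tangency of $\Xi_\frakp$ to $\Sigma_0$ forces $\partial_{u_j}p_0(s) = \partial_{\mu_j}p_0(s) = 0$, while $\partial_{t_j}p_0(s) = 0$ is automatic from $p_0|_{\Sigma_0}\equiv 0$. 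Thus the Taylor expansion of $p_0$ at $s$ has no 0-th or 1-st order terms apart from those in the $\tau$ direction.

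Substituting this Taylor expansion into formula (\ref{PUpsilon}), the $\sqrt{\h}$ term that produced Theorem \ref{localTransportI} now vanishes, and the first surviving contribution appears at order $\h$:
\begin{equation*}
p_0(t,\sqrt{\h}u,\h\tau,\sqrt{\h}\mu) = \h\left[\sum_j\frac{\partial p_0}{\partial\tau_j}\tau_j + \frac{1}{2}\sum_{i,j}\left(\frac{\partial^2 p_0}{\partial u_i\partial u_j}u_iu_j + 2\frac{\partial^2 p_0}{\partial u_i\partial\mu_j}u_i\mu_j + \frac{\partial^2 p_0}{\partial\mu_i\partial\mu_j}\mu_i\mu_j\right)\right] + O(\h^{3/2}),
\end{equation*}
with all derivatives evaluated at $s$. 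Together with the $\h p_1(s) + O(\h^{3/2})$ contribution from the subprincipal symbol, and the observation that the sub-leading coefficients $\h^{1/2}\hat{a}_1,\h\hat{a}_2,\dots$ of $\hat{a}$ only contribute at $O(\h^{3/2})$ after integration, one concludes $b(t,u,\h) = \h\, b_1(t,u) + O(\h^{3/2})$, so $P(\Upsilon)(t,u) = \h^{d+r+1}b_1(t,u/\sqrt{\h}) + O(\h^{d+r+3/2})$ and $b_1$ is identified as the symbol.

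Finally I would evaluate the Fourier integral for $b_1(t,u)$ against $\hat{a}_0(\tau,\mu)$ using $\tau_j\hat{a}_0 = (1/i)\widehat{\partial_{t_j}a_0}$, $\mu_j\hat{a}_0 = (1/i)\widehat{\partial_{u_j}a_0}$, and hence $\mu_i\mu_j\hat{a}_0 = -\widehat{\partial^2_{u_iu_j}a_0}$. Each of the five summands in (\ref{localTransport2}) then arises from one piece of the bracket plus the $p_1(s)a_0$ term, the minus sign on the $\partial^2_{\mu_i\mu_j}p_0\cdot\partial^2_{u_iu_j}a_0$ term coming from the product of two factors of $1/i$. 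The main obstacle is purely bookkeeping: tracking powers of $\sqrt{\h}$ and factors of $i$, and checking that no order-$\h$ contribution is overlooked among cross Taylor terms involving $\tau$ with $u$ or $\mu$, subleading terms of $\hat{a}$ and $p$, and higher Taylor remainders. No new conceptual input beyond Theorem \ref{localTransportI} is needed; assumption $(\ast)$ simply pushes the stationary expansion one order deeper.
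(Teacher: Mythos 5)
Your proposal is correct and matches the paper's proof in all essentials: extract $\partial_{u_j}p_0(s)=\partial_{\mu_j}p_0(s)=0$ from tangency of $\Xi_{p_0}$, Taylor-expand $p(t,\sqrt{\h}u,\h\tau,\sqrt{\h}\mu,\h)$ to order $\h$, substitute into (\ref{PUpsilon}), and convert $\tau_j$, $\mu_j$, $\mu_i\mu_j$ factors against $\hat a_0$ into $t$- and $u$-derivatives of $a_0$ with the appropriate powers of $1/i$. The additional bookkeeping you note (that cross terms in $\tau$ with $u$ or $\mu$ are $O(\h^{3/2})$, and that $\hat a_1,\hat a_2,\ldots$ contribute only at $O(\h^{3/2})$ because the $p$-factor is already $O(\h)$) is accurate and makes explicit what the paper leaves implicit.
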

\begin{proof}
The fact that $\Xi_{p_0}$ is tangent to $\Sigma_0$ implies that at $s=(t,0,0,0)$
\[ 
\frac{\partial p_0}{\partial u_j}(s)=\frac{\partial p_0}{\partial \mu_j}(s)=0, \quad 1 \le j \le l. 
\]
So modulo terms of $o(\hbar)$, one has 
\[\aligned 
p(t,\sqrt \h u, \h \tau, \sqrt \h \mu) = \hbar & p_1(s)   +
\hbar \sum \frac{\partial p_0}{\partial \tau_j}(s)\tau_j 
\\
& + 
\frac \h2 \sum \left( \frac{\partial^2p_0}{\partial u_i\partial u_j}(s)u_iu_j+ 
  \frac{\partial^2p_0}{\partial \mu_i\partial \mu_j}(s)\mu_i\mu_j+ 
 2 \frac{\partial^2p_0}{\partial u_i\partial \mu_j}(s)u_i\mu_j \right)
\endaligned
\]
Substituting this into (\ref{PUpsilon}), 
  the conclusion follows. 
\end{proof} 
\begin{remark}
Once again, the right-hand side of (\ref{localTransport2}) has an 
interesting interpretation, 
 {see Theorem
\ref{symbolcalc2}.  For now we simply point out that the second
line in (\ref{localTransport2}) is the action of the infinitesimal 
metaplectic representation of the hessian of $p_0$ with respect
to the $(u,\mu)$ variables, on the function $a_0(t,\cdot)$.}
\end{remark}

\subsection{Invariance under FIOs preserving $\Sigma_0$}

In this section we prove that the model classes $I^{\bullet}(\Sigma_0)$ are invariant 
under the action of zeroth-order
semiclassical Fourier integral operators associated to (not necessarily homogeneous)
canonical transformations
$f: T^* \bbR^n \to T^* \bbR^n $ that preserve $\Sigma_0$ as a set.  By transplantation, 
this will allow us to define isotropic states on manifolds.  In the next 
section we will study 
how the ({rough})
symbols transform under the action of FIOs.

We state our main theorem: 
\begin{theorem}\label{MLT2}
If $\gamma: T^*\bbR^n \to T^*\bbR^n$ is a symplectomorphism mapping $\Sigma_0$ into $\Sigma_0$ and $F_\gamma$ a semiclassical Fourier integral operator quantizing $\gamma$, then $F$ maps $I(\Sigma_0)$ into $I(\Sigma_0)$. 
\end{theorem}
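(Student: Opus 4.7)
The plan is to deduce the theorem from condition $(\ast)$ of Theorem \ref{localTransportII} via a Hamiltonian-flow argument. Because $I^r(\Sigma_0)$ is a local condition in the tangential variable $t$ and semiclassical FIOs compose, I would first reduce to the case where $\gamma$ is close to the identity. In such a neighborhood I connect $\gamma$ to the identity by a smooth path $\gamma_s$ ($s\in[0,1]$) of symplectomorphisms each preserving $\Sigma_0$ set-wise, generated by a time-dependent Hamiltonian $H_s$ (whose symbol I take to be compactly supported in $u$ so that the quantization is well behaved). Set-preservation forces $\Xi_{H_s}$ to be tangent to $\Sigma_0$; at a point $(t,0;0,0)\in\Sigma_0$ this is equivalent to
\[
\partial_{t_j} H_s = \partial_{u_j} H_s = \partial_{\mu_j} H_s = 0 \quad \text{on } \Sigma_0,
\]
so $H_s|_{\Sigma_0}$ is constant in $t$; subtracting that (time-dependent) constant leaves the flow unchanged and arranges that $H_s$ satisfies hypothesis $(\ast)$.

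Next, I would quantize $\gamma_s$ by a family $F_s$ of zeroth-order semiclassical FIOs with $F_0=\mathrm{Id}$, $F_1=F_\gamma$, obeying the Heisenberg-type evolution $\hbar\,\partial_s F_s = -i\,\widehat{H_s}\,F_s$, where $\widehat{H_s}$ is a degree-$0$ PDO with principal symbol $H_s$. For $\Upsilon\in I^r(\Sigma_0)$, setting $\Upsilon_s := F_s\Upsilon$ gives
\[
\partial_s \Upsilon_s = -i\hbar^{-1}\widehat{H_s}\,\Upsilon_s.
\]
Since $\widehat{H_s}$ is of degree $0$ and its principal symbol $H_s$ satisfies $(\ast)$, Theorem \ref{localTransportII} yields $\widehat{H_s}\colon I^r(\Sigma_0)\to I^{r+1}(\Sigma_0)$; therefore the driving operator $-i\hbar^{-1}\widehat{H_s}$ maps $I^r(\Sigma_0)$ into itself with no net loss of order, which is precisely what is needed for a Duhamel/Picard construction to stay in the class.

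The last step is to build $\Upsilon_s$ inside $I^r(\Sigma_0)$ by constructing its asymptotic symbol: solve the transport equation coming from (\ref{localTransport2}) for the leading Schwartz symbol, which is a Schr\"odinger-type ODE in $u$ driven by the Hessian of $H_s$ and hence preserves $\calS(\bbR^l)$, and iterate term-by-term for the successive coefficients in the $\hbar^{1/2}$-expansion; then Borel-sum to produce $\Upsilon^\sharp_s\in I^r(\Sigma_0)$ solving the evolution equation modulo $O(\hbar^\infty)$. Uniqueness of solutions to this semiclassical evolution equation then identifies $F_\gamma \Upsilon = \Upsilon^\sharp_1$ modulo $O(\hbar^\infty)$, giving $F_\gamma\Upsilon\in I^r(\Sigma_0)$. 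The main obstacle is verifying that the symbol iteration preserves the Schwartz decay in $u$ uniformly in $(s,t)$ on compacts — which reduces to well-posedness of the transport ODE in $\calS(\bbR^l)$ — while a secondary concern, the possible global topological obstruction to joining $\gamma$ to the identity through $\Sigma_0$-preserving symplectomorphisms, is absorbed in the initial reduction to $\gamma$ near the identity.
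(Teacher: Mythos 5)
Your proposal is a genuinely different route from the paper's. The paper proves Theorem \ref{MLT1} first, which factors any symplectomorphism that fixes $\Sigma_0$ pointwise as $\gamma=A\gamma_\phi\gamma_f\gamma_0$, and then establishes invariance of $I^{\bullet}(\Sigma_0)$ for each elementary factor (the lifted diffeomorphism via Lemma \ref{FIOInv0}, the fiber-preserving map $T_\phi$, the partial Fourier transform, and the zero-section fixer via Lemma \ref{gfHtV}) by direct coordinate computation. Your approach replaces this algebraic factorization with a dynamical one: connect $\gamma$ to the identity by a Hamiltonian isotopy, observe that set-preservation of $\Sigma_0$ forces the generating Hamiltonian $H_s$ (after normalizing away a $t$-independent constant) to satisfy hypothesis $(\ast)$, and then run a transport-equation argument using Theorem \ref{localTransportII}. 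The payoff is conceptual economy and the correct order-counting is there: $\widehat{H_s}$ of degree zero satisfying $(\ast)$ maps $I^r\to I^{r+1}$, so $\h^{-1}\widehat{H_s}:I^r\to I^r$. The paper's approach, by contrast, is entirely elementary and self-contained (no appeal to evolution equations), and it also produces the explicit symbol transformation law (\ref{ML24}) along the way, which your argument recovers only implicitly.

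There are, however, gaps that would need to be filled to make your argument complete. First, the opening reduction ``to $\gamma$ near the identity'' is asserted on the grounds that $I^r(\Sigma_0)$ is local in $t$ and FIOs compose, but neither fact produces such a decomposition of $\gamma$ into short pieces; you need the a priori fact that $\gamma$ lies in the identity component of the group of $\Sigma_0$-preserving symplectomorphisms, and that a smooth Hamiltonian isotopy through such maps exists. (This is plausible — the restriction to $\Sigma_0$ lives in $\mathrm{Diff}(\bbR^k)$ and the symplectic normal part in $\mathrm{Sp}(2l)$, both connected — but it is a separate assertion that should be proved, not ``absorbed.'') Second, the quantization step needs more care: you invoke $\h\,\partial_s F_s = -i\widehat{H_s}F_s$, but you must specify a self-adjoint quantization $\widehat{H_s}$ with symbol controlled enough (not just compactly supported in $u$, but with reasonable behavior in $t,\tau,\mu$) that a unitary semiclassical propagator exists and is an FIO quantizing $\gamma_s$. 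Third, the identification $F_\gamma\Upsilon = \Upsilon^\sharp_1 + O(\h^\infty)$ at the end requires a quantitative remainder estimate for the Duhamel formula on the class $I^r$, not merely uniqueness of formal asymptotic solutions; this is a genuine analytic point that the order-by-order iteration does not by itself provide. Finally, as a minor point, note that the theorem as stated allows a general FIO $F_\gamma$ quantizing $\gamma$, whereas your flow constructs a specific one; you should observe, as the paper does, that any two such quantizations differ by a $\Psi$DO and then invoke Theorem \ref{psiDOInv}.
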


Note that, since we already know that the classes $I^{\bullet}(\Sigma_0)$ are invariant under $\Psi$DOs, without loss of generality
we will only consider FIOs preserving $\Sigma_0$ and having an amplitude identically
equal to one.

To prove theorem \ref{MLT2}, we shall first study a number of special cases, and then we will see that the general case is a combination of these special cases.   

\subsubsection{Invariance under FIOs associated with lifted diffeomorphisms}
Let $f: \bbR^n \to \bbR^n$ be a diffeomorphism which maps  $Y$ onto itself. Then $f$ lifts to a symplectomorphism 
\begin{equation}\label{ML9}
\gamma_f: (x, \xi) \mapsto (y, \eta), \quad y=f(x), \xi=df_x^*\eta
\end{equation}
which maps $\Sigma_0$ onto itself and $\gamma_f$ is quantized by the pull-back map, 
\[
f^*: C^\infty(\bbR^n) \to C^\infty(\bbR^n).
\]
We first observe
\begin{lemma}\label{FIOInv0}
The pull-back operator $f^*$ maps $I^r(\Sigma_0)$ onto itself.
\end{lemma}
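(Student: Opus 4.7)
The plan is to use the condition $f(Y)=Y$ to factor out the normal component of $f$, so that the semiclassical scaling $u\mapsto u/\sqrt{\h}$ can be absorbed into a smooth $\h$-dependent reshaping of the symbol.

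First I would write $f(t,u)=(\phi(t,u),\psi(t,u))$. Since $f(Y)\subset Y$ we have $\psi(t,0)=0$, so Taylor's formula with integral remainder gives
\[
\psi(t,u)=A(t,u)\,u,\qquad A(t,u):=\int_0^1 \partial_u\psi(t,su)\,ds,
\]
with $A$ smooth and matrix valued. At $u=0$, $A(t,0)=\partial_u\psi(t,0)$ is invertible, because $df_{(t,0)}$ is block triangular with diagonal blocks $\partial_t\phi(t,0)$ and $\partial_u\psi(t,0)$, and $f$ is a diffeomorphism. Substituting into $\Upsilon(t,u,\h)=a(t,\h^{-1/2}u,\h)$ gives
\[
(f^*\Upsilon)(t,u,\h)=a\bigl(\phi(t,u),\,\h^{-1/2}A(t,u)u,\,\h\bigr).
\]
Setting $v=\h^{-1/2}u$, this is exactly $\tilde a(t,\h^{-1/2}u,\h)$ with
\[
\tilde a(t,v,\h):=a\bigl(\phi(t,\sqrt{\h}\,v),\,A(t,\sqrt{\h}\,v)\,v,\,\h\bigr).
\]

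Next I would Taylor expand $\phi(t,\sqrt{\h}\,v)$ and $A(t,\sqrt{\h}\,v)$ in powers of $\sqrt{\h}$ and substitute into $a\sim \h^r\sum_j \h^{j/2}a_j$. For each $j$, Taylor expanding $a_j\bigl(\phi(t,\sqrt{\h}\,v),A(t,\sqrt{\h}\,v)\,v\bigr)$ in $\sqrt{\h}$ produces a series whose coefficients are polynomials in $v$ times derivatives of $a_j$ evaluated at $(\phi(t,0),A(t,0)\,v)$. Collecting powers of $\h^{1/2}$ yields $\tilde a\sim \h^r\sum_j \h^{j/2}\tilde a_j(t,v)$ with leading term
\[
\tilde a_0(t,v)=a_0\bigl(\phi(t,0),A(t,0)\,v\bigr).
\]

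To verify the Schwartz-type estimates, I would use that $A(t,0)$ is invertible and depends smoothly on $t$, so $v\mapsto A(t,0)\,v$ is a $t$-dependent linear isomorphism. Hence $a_0$ and its derivatives, evaluated at $(\phi(t,0),A(t,0)\,v)$, inherit Schwartz decay in $v$ uniformly on $t$-compacta, and multiplication by a polynomial in $v$ preserves Schwartz behavior. A Taylor remainder estimate applied to $\phi$, $A$, and each $a_j$ then gives the uniform bound on the truncation error demanded in the definition. Surjectivity follows by applying the same argument to $f^{-1}$, which also maps $Y$ onto $Y$, and invoking $f^*\circ (f^{-1})^*=\mathrm{id}$.

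The main obstacle is purely bookkeeping: tracking the polynomial factors in $v$ generated when $\phi$, $A$, and the $a_j$'s are Taylor expanded in $\sqrt{\h}$, and verifying that these polynomial factors are dominated by the Schwartz decay of derivatives of the $a_j$. Once this estimate is pinned down, both the asymptotic expansion and the uniform derivative bounds of the definition drop out directly.
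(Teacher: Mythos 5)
Your proposal is correct and follows the same route as the paper: using $f(Y)\subset Y$ to factor the normal component as $\psi(t,u)=A(t,u)\,u$, substituting, and absorbing the scaling into a new symbol $\tilde a$. The paper's proof stops at writing $a\bigl(f_1(t,u),g_2(t,u)\,u/\sqrt\hbar,\hbar\bigr)$ and asserting membership in $I^r(\Sigma_0)$; you supply the remaining bookkeeping (the Taylor expansion in $\sqrt\hbar$, invertibility of $A(t,0)$ to preserve Schwartz decay, and the argument for surjectivity via $f^{-1}$), all of which is sound.
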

\begin{proof}
If we write $f(t, u) = (\tilde t, \tilde u) =  (f_1(t, u), f_2(t, u))$, then the condition $f(Y) \subset Y$ implies $f_2(t, u)=g_2(t, u)u$. As a consequence,
\[
f^*\Upsilon(t,u, \hbar) = a\left(f_1(t, u), \frac{f_2(t, u)}{\sqrt \hbar}, \hbar \right) = a\left(f_1(t, u), g_2(t,u)\frac{u}{\sqrt \hbar}, \hbar\right) 
\] 
is an element in $I^r(\Sigma_0)$. 
\end{proof}
  
\subsubsection{Invariance under FIOs associated with symplectomorphisms fixing the zero section}

Next we will consider semiclassical FIOs for which  the underlying  canonical transformation $\gamma: T^*\bbR^n \to T^*\bbR^n$ maps the zero section onto itself identically.  Since the zero section is carried out to $T_0^*\bbR^n$ by the symplectomorphism $(x, \xi) \mapsto (-\xi, x)$, it suffices to characterize all symplectomorphisms $\gamma$ which carry the  zero section in $T^*\bbR^n$ identically to $T_0^*  \bbR^n$. 
\begin{lemma}\label{gfHtV}
If $\gamma: T^*\mathbb R^n \to T^*\bbR^n$ is a symplectomorphism mapping the cotangent space $T_0^*\bbR^n=\bbR^n$ identically onto the zero section, $\bbR^n$ in $T^*\bbR^n$, then it has to be horizontal, i.e. defined by a generating function, $\varphi(x, y) \in C^\infty(\bbR^n \times \bbR^n)$, with the property
\[
\gamma(x,\xi)=(y, \eta)  \mbox{\ iff\ } \xi=-\frac{\partial \phi}{\partial x}, \; \eta=\frac{\partial \phi}{\partial y}.
\]
Moreover, $\varphi$ has to be of the form 
\[
\varphi(x, y, \hbar)=x\cdot y+\sum \varphi_{i\!,\!j}(x, y, \hbar)y_iy_j
\] 
and the Fourier integral operator quantizing $\gamma$ has to have a Schwartz kernel of the form 
\begin{equation}\label{ML1}
e^{\frac{i}{\hbar}\left(x \cdot y+\sum \varphi_{i\!,\!j}(x,y,\hbar)y_iy_j \right)}a(x,y,\hbar).
\end{equation}
\end{lemma}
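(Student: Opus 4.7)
The plan is to first show that the twisted graph $\Gamma_\gamma$ is horizontal near the submanifold fixed identically by $\gamma$ (so that a generating function $\varphi(x,y)$ exists), then to extract the shape of $\varphi$ from the identification hypothesis, and finally to quote the standard structure theorem for semiclassical FIO kernels.

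For horizontality, it suffices to check that the block $\partial y/\partial\xi$ of the linearization $D\gamma_s$ is invertible at each point $s$ of the fixed submanifold. The identification hypothesis determines one block-column of $D\gamma_s$ (the restriction to the tangent space of the source submanifold), and the symplectic condition $D\gamma_s^{T}\, J\, D\gamma_s = J$ constrains the remaining entries; a short linear-algebra computation then yields $\partial y/\partial\xi = \pm I$, which is what we need. By the implicit function theorem the projection $\Gamma_\gamma \to \bbR^n_x \times \bbR^n_y$ is a local diffeomorphism near the fixed submanifold, so $\Gamma_\gamma$ is cut out by the stated generating-function relations $\xi = -\partial\varphi/\partial x$, $\eta = \partial\varphi/\partial y$ for some smooth $\varphi(x,y)$.

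To pin down the explicit form of $\varphi$, I substitute the identification $\gamma(x,0) = (0,x)$ into these relations evaluated at $y=0$, obtaining $\partial\varphi/\partial x(x,0) = 0$ and $\partial\varphi/\partial y(x,0) = x$. The first equation says $\varphi(x,0)$ is a constant, which I normalize to zero; the second fixes the linear-in-$y$ Taylor coefficient at $y=0$ to be $x\cdot y$. Two applications of Hadamard's lemma in the $y$ variable then yield
\[
\varphi(x, y) = x\cdot y + \sum_{i,j} \varphi_{ij}(x, y)\, y_i y_j
\]
with smooth coefficients $\varphi_{ij}$. The Schwartz kernel form follows from the standard structure theorem for semiclassical FIOs: a zeroth-order operator quantizing $\gamma$ has kernel $e^{i\varphi(x,y,\hbar)/\hbar}\,a(x,y,\hbar)$ with $\varphi$ a phase function generating $\Gamma_\gamma$ (any $\hbar$-dependent subprincipal corrections to the phase being absorbed into the $\varphi_{ij}$) and $a$ a classical amplitude. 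The main obstacle is the first step: the identification hypothesis is a purely tangential constraint on $\gamma$, and one must use the symplectic condition in an essential way to upgrade it to the transverse statement ``$\partial y/\partial\xi$ is invertible'' that is needed for a generating function to exist.
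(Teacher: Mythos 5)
Your proof is essentially correct and follows the same overall structure as the paper's: first establish that the twisted graph is horizontal, then derive the shape of the phase from the identification hypothesis, and finally invoke the structure theorem for semiclassical FIO kernels. A couple of points are worth noting. For the horizontality step, the paper argues more softly: since $d\gamma_p$ maps the (Lagrangian) tangent to the zero section \emph{onto} the (Lagrangian) vertical at $\gamma(p)$, and $d\gamma_p$ is an isomorphism, $d\gamma_p(\text{vertical})$ must be complementary to the vertical, so no nonzero vertical vector can map to a vertical vector; this uses only invertibility of $d\gamma$, not the symplectic relation. In fact your own setup already shows symplecticity is inessential here: once the identification hypothesis gives the block-column $\bigl(\partial y/\partial x,\,\partial\eta/\partial x\bigr)=(0,I)$, the matrix $D\gamma_s=\begin{pmatrix}0&B\\ I&D\end{pmatrix}$ has $\det D\gamma_s=\pm\det B$, so $B$ is invertible simply because $D\gamma_s$ is. Your closing remark that the symplectic condition is ``essential'' is therefore overstated. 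Also, if you do run the symplectic computation, it pins down $B$ to exactly $-I$ (or $+I$, depending on the sign convention in $J$), not $\pm I$. For the second step, you use both pieces of the identification hypothesis, $\partial\varphi/\partial x(x,0)=0$ and $\partial\varphi/\partial y(x,0)=x$, to read off the linear term $x\cdot y$ directly via Hadamard; the paper uses only the first relation and then normalizes the linear term $\sum y_i\phi_i(x)$ to $x\cdot y$ by a change of variables in $x$ (justified by invariance of $I(\Sigma_0)$ under lifted diffeomorphisms). Your route is a bit more direct given that the lemma says the map is \emph{identically} onto, and is a legitimate shortcut. Finally, note the lemma's statement has the direction flipped relative to the surrounding discussion and the paper's proof (which treat $\gamma$ as carrying the zero section to $T_0^*\bbR^n$, i.e.\ $\gamma(x,0)=(0,x)$); you correctly work with the latter, which is what makes the phase come out as $x\cdot y+\sum\varphi_{ij}y_iy_j$.
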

\begin{proof}
``Horizontality" is equivalent to the condition that the graph, $\Gamma$, of $\gamma$ is nowhere vertical, or alternatively, that for any pair $p, q \in M=T^*\bbR^n$ with $q=\gamma(p)$, $d\gamma_p: T_pM \to T_qM$ does not map vectors $v \ne 0$ tangent to the cotangent fiber of $M$ at $p$ onto vectors, $w=d\gamma_p(v)$ tangent to the cotangent fiber of $M$ at $q$. However in a neighborhood of the zero section in $M$ this is ruled out by the condition  that $\gamma$ maps the zero section $\bbR^n$  of $M$ onto $T_0^* \bbR^n$.

Now assume that $\Gamma$ is horizontal, with generating function $\phi$.  Let $\gamma$ be the corresponding symplectomorphism.   We would like to find the conditions so that $\gamma$ maps the zero section onto $T_0^*\mathbb R^n$. Clearly this is the case if and only if for all $x \in \mathbb R^n$, 
\begin{equation}
\frac{\partial}{\partial x}\phi(x, 0)=0,
\end{equation}
or equivalently if $\phi(x, 0)$ is a constant function of $x$, and without loss of generality we can assume that this constant function is zero, i.e. 
\begin{equation}
\phi(x, y) = \sum y_i \phi_i(x)+\sum y_i y_j \phi_{i, j}(x, y).
\end{equation}
Moreover, for $\phi(x, y)$ to be a generating function of a symplectomorphism we need to require that the matrix 
\begin{equation}
\left[
\frac{\partial^2 \phi}{\partial x_i\partial y_j}(x, y)
\right]
\end{equation}
is everywhere of rank $n$, and hence in particular that $[\frac{\partial \phi_i}{\partial x_j}(x)]$ is invertible. Thus modulo a change of variables (which, as we have seen, will not change the class $I(\Sigma)$) we can assume 
\begin{equation}
\phi(x, y) = \sum x_i y_i + \sum y_iy_j\phi_{i, j}(x, y, \hbar). 
\end{equation}
Hence   the F.I.O. quantizing $\gamma$ is the operator 
\begin{equation}
A_\gamma f(y) = \int e^{\frac i\hbar (x\cdot y + \sum y_iy_j \phi_{ij}(x,y,\hbar))} a(x,y,\hbar)f(x) dx. 
\end{equation}
\end{proof}
 
To obtain the general transformation preserving the zero section, it suffices to pre-compose 
the $\gamma$'s alluded above with the transformation $J:(x,\xi)\mapsto (-\xi,x)$, which is 
associated to the semiclassical Fourier transform.  So we need 

\begin{lemma}
Let $\Sigma_1 = \{(x=0; \tau,\mu=0)\}\subset T_0^*\bbR^n$
and let $I^r(\Sigma_1)$ denote the image of $I^r(\Sigma_0)$ under the semiclassical Fourier
transform.  Then the elements of $I^r(\Sigma_1)$ are precisely the functions of the form
\begin{equation}\label{}
b\left(\hinv t, \hmtwo u, \h \right)
\end{equation}
where $b$ has an asymptotic expansion as before. 
\end{lemma}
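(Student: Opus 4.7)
The plan is to compute the semiclassical Fourier transform of a general element $\Upsilon\in I^r(\Sigma_0)$ directly. Writing $\Upsilon(\tilde t,\tilde u,\h)=a(\tilde t,\hmhalf \tilde u,\h)$ and naming the target-side variables $(t,u)$ as in the statement,
\be
(\calF_\h \Upsilon)(t,u) = (2\pi\h)^{-n/2}\iint e^{-i(\tilde t\cdot t+\tilde u\cdot u)/\h}\,a(\tilde t,\hmhalf \tilde u,\h)\,d\tilde t\,d\tilde u.
\ee
The substitution $\tilde u=\hhalf u'$ contributes a Jacobian $\h^{l/2}$ and rewrites the phase as $-i[\tilde t\cdot(\hinv t)+u'\cdot(\hmhalf u)]$, whereupon the remaining double integral is the ordinary Fourier transform $\hat a$ of $a$ in both variables, evaluated at the rescaled frequencies $(\hinv t,\,\hmhalf u)$. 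Collecting the $\h$-prefactors gives
\be
(\calF_\h \Upsilon)(t,u) = (2\pi)^{-n/2}\,\h^{-k/2}\,\hat a(\hinv t,\,\hmhalf u,\,\h),
\ee
which is visibly of the claimed form $b(\hinv t,\hmhalf u,\h)$ with $b:=(2\pi)^{-n/2}\h^{-k/2}\hat a$.

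Next I would verify that $b$ inherits an asymptotic expansion of the required shape. Since the Fourier transform commutes with asymptotic expansions in $\h$, the expansion $a\sim \h^r\sum_{j\ge 0}\h^{j/2}a_j(\tilde t,\tilde u)$ transfers term by term to $\hat a\sim\h^r\sum\h^{j/2}\hat a_j(t,u)$, and the scalar prefactor $\h^{-k/2}$ simply shifts the overall degree from $r$ to $r-k/2$. Because each $a_j$ is Schwartz in $\tilde u$ locally uniformly in $\tilde t$, Fourier duality in the $\tilde u$-variable guarantees that each $\hat a_j$ is Schwartz in $u$, which is precisely what the phrase ``asymptotic expansion as before" requires on the $\Sigma_1$ side. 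The reverse inclusion — that every function of the form $b(\hinv t,\hmhalf u,\h)$ lies in $\calF_\h I^r(\Sigma_0)$ — is obtained by running exactly the same computation with the inverse semiclassical Fourier transform.

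The main technical subtlety is that elements of $I^r(\Sigma_0)$ are not required to be Schwartz in $\tilde t$ — only smooth with Schwartz bounds in $\tilde u$ locally uniform in $\tilde t$ — so the Fourier integral in the $\tilde t$-direction is a priori only a tempered distribution, and termwise control of the expansion in $\tilde t$ is not immediate. I would handle this by reducing to the case of $\tilde t$-compactly-supported $a$, either via an explicit cutoff $\chi(\tilde t)$ supported near the projection of $\Sigma_0$ or by working modulo $O(\h^\infty)$; the latter is harmless because the semiclassical wavefront set of $\Upsilon$ lies in $\Sigma_0\subset\{\tilde u=0,\,\xi=0\}$, so the contribution from $\tilde t$ outside any fixed compact set is negligible. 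After this reduction the remaining estimates are entirely routine.
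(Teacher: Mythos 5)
Your computation matches the paper's proof: both take the semiclassical Fourier transform of $a(\tilde t,\hmhalf\tilde u,\hbar)$, absorb the $\hinv$ and $\hmhalf$ scalings into the dual variables, and recognize the result as (a constant and a power of $\hbar$ times) the ordinary Fourier transform of $a$; the $\hbar^{-k/2}$ vs.\ $\hbar^{l/2}$ discrepancy between your write-up and the paper's is only an artifact of the normalization chosen for $\calF_\h$ and does not affect the structure of the expansion, which is all the lemma asserts. You are also right to flag that the definition of $I^r(\Sigma_0)$ imposes no decay in $\tilde t$, so the $\tilde t$-integral is a priori only distributional — the paper does pass over this. However, the justification you give for reducing to compact $\tilde t$-support does not hold up: containment of the semiclassical wavefront set in $\Sigma_0$ constrains the microlocal oscillation of $\Upsilon$, not its support or growth (and the $\tilde t$-projection of $\Sigma_0$ is all of $\bbR^k$, so there is no ``small neighborhood of the projection'' to cut to); the tail $\{|\tilde t|>R\}$ need not contribute $O(\hbar^\infty)$. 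The clean fix is simply to assume compact $\tilde t$-support — harmless here, since the lemma is a model computation and Definition~3.1 works with microlocal cut-offs — rather than to argue that the tail is negligible.
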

\begin{proof}
Let $\Upsilon(t,u,\hbar)=a(t, \hbar^{-1/2}u, \hbar)$ be an element in $I^r(\Sigma_0)$. Then
its semi-classical Fourier transform is   
\[
\mathcal F_\hbar \Upsilon(\tau, \mu) = \int e^{-\frac i\hbar (t \cdot \tau+u \cdot \mu)}a(t, \frac u{\sqrt \hbar}, \hbar) dtdu 
= \int e^{-i(t \cdot \frac \tau \hbar + \frac{u}{\sqrt \hbar} \cdot \frac {\mu}{\sqrt \hbar} )}a(t, \frac u{\sqrt \hbar}, \hbar) dtdu ,
\]
which can be written as the form $b\left(\hinv \tau, \hmtwo \mu, \h \right)$, where
\[
b\left(\tau, \mu, \h \right) = \int e^{-i(t\cdot \tau+\frac u{\sqrt\hbar} \cdot \mu)}a(t, \frac u{\sqrt \hbar}, \hbar) dtdu
\]
is the Fourier transform of $a$, and thus has an asymptotic expansion as in (\ref{2exp}).  
\end{proof}
 
Now suppose $b(\frac{t}{\hbar}, \frac{u}{\sqrt \hbar}, \hbar)$ is an isotropic function on $\mathbb R^n$ with microsupport on the subset $\mu=0$ of $T_0^*\bbR^n$ and framed by $T_0^*\bbR^n$.  Let $(Fb)(t, u, \hbar)$ be equal to 
\begin{equation}\label{ML2}
\int e^{\frac i\hbar \left(t \cdot \tilde t+u \cdot \tilde u + 
\sum \tilde t_i\tilde t_j  \varphi_{ij}(t,u,\tilde t,\tilde u, \hbar)+
\sum \tilde u_i \tilde u_j \phi_{ij}(t,u,\tilde t, \tilde u,\hbar)+
\sum \tilde t_i \tilde u_j \psi_{ij}(t,u,\tilde t, \tilde u,\hbar) \right)} 
a(t,u,\tilde t, \tilde u, \hbar)b(\frac{\tilde t}{\hbar}, \frac{\tilde u}{\sqrt \hbar}, \hbar)d\tilde t d \tilde u.
\end{equation}
Replacing in this integral $\tilde t_i$ by $\hbar \tilde t_i$ 
and $\tilde u_i$ by $\sqrt \hbar \tilde u_i$, 
it becomes $\hbar^{k+\frac l2}g(t, \frac{u}{\sqrt \hbar}, \hbar)$, where 
\begin{equation}\label{ML3}
g(t, u, \hbar) = \int e^{i\left( (t,u)\cdot(\tilde t, \tilde u)+\sum \tilde u_i \tilde u_j \tilde \phi_{ij}+
\hbar \sum \tilde t_i \tilde t_j  \tilde \varphi_{ij} +
\sqrt\hbar \sum \tilde t_i \tilde u_j \tilde \psi_{ij} \right)}a(t,u,\hbar \tilde t, \tilde u, \hbar) 
b(\tilde t,\tilde u, \hbar) d\tilde t d \tilde u,
\end{equation}
with 
\[
\tilde \varphi_{ij}(t, u, \tilde t, \tilde u, \hbar)=\varphi_{ij}(t, \sqrt \hbar u, \hbar \tilde t, \sqrt \hbar \tilde u, \h)
\]
and likewise for $\tilde \phi_{ij}$ and $\tilde \psi_{ij}$. 
Note that in particular 
\begin{equation}\label{ML4}
g(t, u, 0) = a(t,u,0,0,0)\int e^{i((t,u)\cdot (\tilde t, \tilde u)+\sum \tilde u_i \tilde u_j \phi_{ij}(t, 0))} b(\tilde t,\tilde u, 0)d\tilde t d\tilde u. 
\end{equation}

Let's next compose the operator (\ref{ML2}) with the semiclassical Fourier transform 
\begin{equation}
\Upsilon = b(\tilde t, \frac{\tilde u}{\sqrt \hbar}) \mapsto \hat b(\frac{\tilde t}{\hbar}, \frac{\tilde u}{\sqrt \hbar}).
\end{equation}
This gives us an operator 
\begin{equation}\label{ML6}
\Upsilon \mapsto \int e^{\frac i\hbar \left( (t,u)\cdot (\tilde t,\tilde u)+\sum \tilde t_i \tilde t_j  \varphi_{ij} +\sum \tilde u_i \tilde u_j \phi_{ij} + \sum \tilde t_i \tilde u_j \psi_{ij} \right) }  a(t,u,\tilde t, \tilde u, \hbar)\hat b(\frac{\tilde t}{\hbar}, \frac{\tilde u}{\sqrt \hbar}) d\tilde t d\tilde u
\end{equation}
and by the lemma \ref{gfHtV} above this operator is the quantization of a symplectomorphism $\gamma: T^*\bbR^n \to T^*\bbR^n$ mapping the zero section identically onto itself. Moreover, the operator (\ref{ML6}) maps the space of functions $I^\bullet(\Sigma_0)$ onto itself when $\Sigma_0$ is the subset $\tilde u=0$ of the zero section of $T^*\bbR^n$ (by the calculation we've just made). Also note that if we set $\hbar=0$ we get by the formulae (\ref{ML3}) and (\ref{ML4}) the expression 
\begin{equation}\label{ML7}
\hbar^{k+\frac l2} g(t, u, 0) = a(t,u,0,0,0)\int e^{i (t,u)\cdot (\tilde t,\tilde u)}e^{i \sum \tilde u_i \tilde u_j \phi_{ij}(t, 0)} \hat b(\tilde t,\tilde u, 0)d\tilde t d\tilde u
\end{equation}
for the leading term in (\ref{ML6}). In other words for $t$ fixed the function 
\[
\sigma_{t}(g)(u):=g(t, u, 0)
\]
is the function 
\begin{equation}\label{ML8}
\sigma_t(a)(F_2^{-1} e^{i\sum \lambda_{ij}^t u_i u_j} F_2) \sigma_{t}(b), \quad \sigma_{t}(f) = f(t, u, 0),
\end{equation}
where $F_2$ is the Fourier transform $h(u) \mapsto \int e^{-iu\tilde u}h(u)du$ and $\lambda_{ij}^{t}$ is the constant $\phi_{ij}(t, 0)$. 
 
\subsubsection{Invariance under FIOs associated to fiber-preserving symplectomorphisms}

 Let $\phi: \bbR^n \to \bbR$ be a $C^\infty$ function with the property 
\begin{equation}\label{ML10}
\phi(t, 0)=\frac{\partial \phi}{\partial x}(t, 0) = 0.
\end{equation}
Then the symplectomorphism 
\begin{equation}\label{ML11}
\gamma_\phi: (x, \xi) \mapsto (x, \xi+d\phi(x))
\end{equation}
preserves $\Sigma_0$ and maps the zero section in $T^*\bbR^n$ onto the Lagrangian submanifold 
\begin{equation}\label{ML12}
\Lambda_\phi = \{(x, d\phi(x))\ : x \in \bbR^n \}.
\end{equation}
Moreover, $\gamma_\phi$ has a natural quantization, the semi-classical Fourier integral operator 
\begin{equation}
\label{ML13}
T_\phi f(x) = e^{\frac{i\phi}{\hbar}}f(x). 
\end{equation}
\noindent{\bf Claim:} This operator preserves $I^\bullet(\Sigma_0)$. 
\begin{proof}
Given $\Upsilon=a(t, \frac{u}{\sqrt \hbar}, \hbar) \in I(\Sigma_0)$ let $\Upsilon_1=e^{\frac{i\phi}{\hbar}}\Upsilon$. By (\ref{ML10}), 
\[
\phi(t, u) = \sum \frac{u_r}{\hbar^{1/2}} \frac{u_s}{\hbar^{1/2}} \psi_{r,s}(t, \hbar^{1/2}\frac{u}{\hbar^{1/2}}) = \psi(t, \frac{u}{\hbar^{1/2}}, \hbar),
\]
hence 
\begin{equation}\label{ML14}
\Upsilon_1=\Upsilon_2(t, \frac{u}{\hbar^{1/2}}, \hbar)
\end{equation}
where $\Upsilon_2(t, u, \hbar)=e^{i\sum u_r u_s \psi_{r,s}(t, \hbar^{1/2}u)}a(t,u, \hbar)$ and hence 
is in $I(\Sigma_0)$. 
\end{proof}
We also note for future reference that 
\begin{equation}\label{ML15}
\Upsilon_1(t, u, 0)=e^{i\sum u_r u_s \psi_{r,s}(t, 0)}a(t,u,0).
\end{equation}

\subsubsection{Invariance under partial Fourier transforms}

Let $T^*\bbR^n=T^*\bbR^k \times T^*\bbR^l$ and let $\gamma: T^*\bbR^n \to T^*\bbR^n$ be the symplectomorphism which is equal to the identity on $T^*\bbR^k$ and the map, $(u, \eta) \mapsto (-\eta, u)$ on $T^*\bbR^l$. This symplectomorphism maps the zero section, $\Sigma_0$ in $T^*\bbR^k$ identically onto itself and maps the zero section of $T^*\bbR^n$ onto the conormal bundle of $\Sigma_0$ in $T^*\bbR^n$. Moreover its quantization is the semiclassical Fourier transform 
\begin{equation}\label{ML16}
f(t, u) \mapsto \int e^{\frac{iu\mu}{\hbar}} f(t, \tilde u) d\tilde u
\end{equation}
in the variable $u$ with $t$ held fixed. In particular it maps $\Upsilon(t, u, \hbar)=a(t, \frac{u}{\sqrt{\hbar}}, \hbar)$ in $I^0(\Sigma_0)$ onto 
\begin{equation}\label{ML17}
\Upsilon_1(t, u, \hbar) =\hbar^{\frac l2} \hat a(t, \frac{u}{\sqrt \hbar}, \hbar)
\end{equation}
in $I^{\frac l2}(\Sigma_0)$ where $\hat a(t, u, \hbar)$ is the classical Fourier transform of $a$ in the variable $u$ with $\hbar$ and $t$ held fixed. 

We note for future reference that if $\sigma_t(\Upsilon)(u)=a(t, u, 0)$ and $\sigma_t(\Upsilon_1)(u)=\hat a(t, u, 0)$, then 
\begin{equation}\label{ML18}
\sigma_t(\Upsilon_1) = \int e^{iu \cdot \mu} \sigma_t(\mu) d\mu. 
\end{equation}

\subsubsection{Decomposition of symplectomorphisms preserving $\Sigma_0$}

We first describe the linear symplectomorphisms which preserves $\Sigma_0$. If $A: T^*\bbR^n \to T^*\bbR^n$ is a linear symplectomorphism and is the identity on $\Sigma_0$, then it is the identity on $T^*\bbR^n/\Sigma_0^o$ and hence is basically a linear symplectomorphism of $\Sigma_0^o/\Sigma_0=T^*\bbR^l$. As above let $(u_1, \cdots, u_l, \mu_1, \cdots, \mu_l)$ be cotangent coordinates on $T^*\bbR^l$. Then by a standard theorem in symplectic linear algebra the group of linear symplectomorphisms of $T^*\bbR^l$ is generated by linear mappings of type I, I\!I and I\!I\!I, i.e. linear maps of the form 
\begin{equation}\label{MLI}
\begin{pmatrix} B & 0 \\ 0 & (B^t)^{-1} \end{pmatrix},
\end{equation}
of the form 
\begin{equation}\label{MLII}
\begin{pmatrix} I & C \\ 0 & I \end{pmatrix},
\end{equation}
and of the form 
\begin{equation}\label{MLIII}
\begin{pmatrix} 0 & I \\ -I & 0 \end{pmatrix},
\end{equation}

We will prove below 
\begin{theorem}\label{MLT1}
If $\gamma: T^*\bbR^n \to T^*\bbR^n$ is a symplectomorphism which maps $\Sigma_0$ onto $\Sigma_0$ and is the identity on $\Sigma_0$, then it is a composition 
\begin{equation}\label{ML20}
\gamma = A \gamma_\phi \gamma_f \gamma_0,
\end{equation}
where $\gamma_0$ is a symplectomorphism whose restriction to the zero section of $T^*\bbR^n$ is the identity, $\gamma_f$ is the symplectomorphism (\ref{ML9}), $\gamma_\phi$ is the symplectomorphism (\ref{ML11}) and $A$ a linear symplectomorphism whose restriction to $\Sigma_0$ is the identity. 
\end{theorem}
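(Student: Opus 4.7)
The plan is to peel the four factors off $\gamma$ from the left in succession, working in a neighborhood of a chosen base point $s_0\in\Sigma_0$; such a local decomposition is all that is needed for the microlocal application in Theorem \ref{MLT2}.

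First I take $s_0$ to be the origin of $T^*\bbR^n$, which lies in $\Sigma_0$ and is fixed by $\gamma$. The differential $d\gamma_0$ is a linear symplectomorphism of $T_0T^*\bbR^n\cong T^*\bbR^n$ which, by the hypothesis that $\gamma$ is the identity on $\Sigma_0$, restricts to the identity on $T_0\Sigma_0$. Since $\Sigma_0$ is the linear subspace through the origin with tangent space $T_0\Sigma_0$, setting $A:=d\gamma_0$ produces a linear symplectomorphism of $T^*\bbR^n$ whose restriction to $\Sigma_0$ is the identity. Put $\tilde\gamma:=A^{-1}\gamma$, so that $d\tilde\gamma_0=\mathrm{id}$ and $\tilde\gamma$ still fixes $\Sigma_0$ pointwise.

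Next let $Z_0$ denote the zero section and consider the Lagrangian $L:=\tilde\gamma(Z_0)$. Because $\tilde\gamma$ fixes $\Sigma_0\subset Z_0$ pointwise we have $\Sigma_0\subset L$; because $T_0L=d\tilde\gamma_0(T_0Z_0)=T_0Z_0$, the Lagrangian $L$ is horizontal on a neighborhood of the origin, so it is the graph $\Lambda_\phi=\{(x,d\phi(x))\}$ of the differential of a smooth function $\phi$. The inclusion $\Sigma_0\subset L$ forces $d\phi$ to vanish along $Y$, and after subtracting a constant we also arrange $\phi|_Y=0$; these are exactly the requirements (\ref{ML10}). Hence $\gamma_\phi^{-1}\tilde\gamma$ carries $Z_0$ back into $Z_0$ and is still the identity on $\Sigma_0$. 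Its restriction to $Z_0\cong\bbR^n$ is a local diffeomorphism $f$ with $f|_Y=\mathrm{id}$; in particular $f(Y)\subset Y$, so its symplectic lift $\gamma_f$ given by (\ref{ML9}) is of the allowed form. Since $\gamma_f$ and $\gamma_\phi^{-1}\tilde\gamma$ agree on $Z_0$, the composition $\gamma_0:=\gamma_f^{-1}\gamma_\phi^{-1}A^{-1}\gamma$ is a symplectomorphism that restricts to the identity on the zero section. Rearranging gives the desired factorization $\gamma=A\gamma_\phi\gamma_f\gamma_0$.

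The crux of the argument is the horizontality step: once the linear part $A=d\gamma_0$ has been factored out so that $d\tilde\gamma_0=\mathrm{id}$, the Lagrangian $\tilde\gamma(Z_0)$ becomes tangent to $Z_0$ at the origin and is therefore horizontal in a neighborhood, which is precisely what produces a generating function $\phi$ with the vanishing behavior (\ref{ML10}). This is also what makes the decomposition inherently local. The remaining steps are essentially bookkeeping, and the lower-order components $\gamma_\phi$, $\gamma_f$, $\gamma_0$ fall out automatically once $A$ has normalized the tangent behavior of $\gamma$ at the base point.
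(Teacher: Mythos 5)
Your proof is correct and takes a genuinely different route from the paper's. The paper finds the linear factor $A$ by a parametric transversality argument: it considers the family $df:\mathrm{Sp}(\bbR^{2l})\times T\bbR^n\to T(T^*\bbR^n)$ obtained by varying $A\in\mathrm{Sp}(\bbR^{2l})$, observes that this family is transversal to the vertical subbundle, and invokes Thom transversality to conclude that for some $A$ the Lagrangian $A^{-1}\gamma(Z_0)$ is horizontal. You instead make the explicit choice $A:=d\gamma_0$, so that $d\tilde\gamma_0=\mathrm{id}$ forces $\tilde\gamma(Z_0)$ to be tangent to $Z_0$ at the base point and therefore horizontal on a neighborhood. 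Your route is more elementary and constructive (no genericity theorem needed), at the cost of being manifestly local: away from the base point $s_0$ the differential $d\tilde\gamma_p$ need not be the identity, so horizontality of $\tilde\gamma(Z_0)$ is guaranteed only near $s_0$. The paper's transversality argument is aimed at producing a single $A$ that works on all of $\bbR^n$ (though making that fully rigorous requires controlling properness of the projection of the resulting Lagrangian, which the paper glosses over). Since the application in Theorem \ref{MLT2} is carried out via a microlocal partition of unity, the local statement you prove is in fact all that is used, and your observation to that effect is apt. Once $A$ and $\phi$ have been extracted, your bookkeeping for the remaining factors $\gamma_f$ and $\gamma_0$ matches the paper's.
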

\begin{proof}
Let $M=T^*\bbR^n$ and let $(TM)_{vert}$ be the vertical subbundle of $TM$, i.e. for each $p \in M$ the vectors in $(T_pM)_{vert}$ are vectors which are tangent at $p$ to the fiber at $p$ of the projection, $T^*\bbR^n \to \bbR^n$. Given $A \in Sp(\bbR^{2l})$, we get from $A^{-1}\gamma$, by restriction to the zero section, $\bbR^n$, of $M$ a map, $f_A: \bbR^n \to M$, a map, $df_A: T\bbR^n \to TM$ and as we vary $A$, a map 
\begin{equation}\label{ML21}
df: Sp(\bbR^{2l}) \times T\bbR^n \to TM.
\end{equation}
It is easy to see that this map is transversal to $(TM)_{vert}$, and hence, by Thom transversality, there exists an $A \in Sp(\bbR^{2l})$ such that $df_A$ is transversal to $(TM)_{vert}$, i.e. the graph of $f_A: \bbR^n \to T^*\bbR^n$ is horizontal. In particular, its image is a horizontal Lagrangian submanifold of $M$ of the form $\mathrm{Image}(\gamma_\phi|_{\bbR^n})$, where $\gamma_\phi$ is the symplectomorphism (\ref{ML11}). Thus 
\begin{equation}\label{ML22}
\gamma=A \gamma_\phi   \gamma_f   \gamma_0,
\end{equation}
where $\gamma_f$ is the symplectomorphism (\ref{ML9}), $\gamma_\phi$ the symplectomorphism (\ref{ML11}) and $\gamma_0$ a symplectomorphism which is the identity on the zero section of $T^*\bbR^n$. 
\end{proof}

Finally we note that if $A$ is a linear symplectomorphism of the form (\ref{MLI}) it is a symplectomorphism of type $\gamma_f$, if it is of the form (\ref{MLII}) it is of type $\gamma_\phi$, and if it is of the form (\ref{MLIII}) its quantization is the partial Fourier transform (\ref{ML16}).

\subsubsection{Proof of theorem \ref{MLT2}}
By theorem \ref{MLT1} it suffices to prove this for the quantization (\ref{ML13}) of $\gamma_\phi$, the quantization $f^*$ of $\gamma_f$, the quantization (\ref{ML16}) of the linear symplectomorphism (\ref{MLIII}) and all quantizations of $\gamma_0$. However, for the quantizations of $\gamma_\phi$, $\gamma_f$ and the symplectomorphism (\ref{MLIII}) that we've just alluded to, this assertion follows from lemma \ref{gfHtV}.
\hfill{$\Box$}

\subsection{Transformation of the symbols under FIOs preserving $\Sigma_0$}

Given an element, $\Upsilon=\hbar^l a(t, \frac{u}{\sqrt \hbar}, \hbar)$ of $I^l(\Sigma_0)$ we have defined its symbol 
\[
\sigma(\Upsilon)_t \in \mathcal S(\bbR^l)
\]
at $t \in \Sigma_0$ to be the Schwartz function 
\[
\sigma(\Upsilon)(t) = a_t(u) := a(t, u, 0).
\]
Next we will discuss the ``symbolic calculus" of these symbols, i.e. describe how they transform under composition by the FIO's in theorem \ref{MLT2}. In view of the factorization, $\gamma=A \gamma_\phi \gamma_f \gamma_0$ in theorem \ref{MLT1} it suffices to describe how they transform for the FIO's (\ref{ML6}), (\ref{ML13}) and (\ref{ML16}) and for the pullback map 
\[
f^*: \mathcal S(\bbR^n) \to \mathcal S(\bbR^n)
\]
quantizing $\gamma_f$. However, for the FIO (\ref{ML6}) we showed that symbols transform by the formula (\ref{ML8}), for the FIO (\ref{ML13}) by the formula (\ref{ML15}) and for the FIO (\ref{ML16}) by the formula (\ref{ML18}). Moreover, in addition it's easy to check that for the pull-back map $f^*$, symbols transform by the recipe 
\begin{equation}\label{ML23}
\sigma_t(f^*\Upsilon) (u)= \sigma_{f(t)}(\Upsilon)((df_t)^{-1}(u)).
\end{equation}
These formulas, by the way, have a nice abstract interpretation which we'll discuss later in this paper. Namely if $\gamma: T^*\bbR^n \to T^*\bbR^n$ is a symplectomorphism whose restriction to $\Sigma_0$ is the identity then for $t \in \Sigma_0$ the linear map $(d\gamma)_t$ restrict to a linear symplectomorphism of the symplectic normal bundle to $\Sigma_0$ at $t$, or, in other words, an element $(L_\gamma)_t$ of $Sp(\bbR^{2l})$; and for the Fourier integral operator, $F_\gamma$, quantizing $\gamma$ in each of the cases above 
\begin{equation}\label{ML24}
\sigma_t(F\Upsilon) =(L_\gamma)_t^\# \sigma_t(\Upsilon),
\end{equation}
where $(L_\gamma)_t^\#$ is the metaplectic representation of $(L_\gamma)_t$ on the Schwartz space $\mathcal S(\bbR^l)$.

\section{Global theory}
\newcommand{\Mp}{{\rm Mp}}
\newcommand{\Sp}{{\rm Sp}}
\newcommand{\Gl}{{\rm Gl}}

We begin with the global definition on manifolds.  We will keep the notation 
of \S 2 for the model spaces $I^r(\Sigma_0)$.

\begin{definition}\label{generaldef}
Let $M$ be a smooth manifold of dimension $n$, and $\Sigma\subset T^*M$ an isotropic 
submanifold of
its cotangent bundle, of codimension $n+l$.  
Let $r$ be a half-integer.  Then the space $I^r(\Sigma)$ is defined
as the set of all $\h$-dependent functions, $\Upsilon:M\times (0,\h_0)\to\bbC$
with wave-front set contained in $\Sigma$, such that there exists a microlocal partition of unity
$\{\chi_\ell\}$ of a neighborhood of $\Sigma$, and zeroth order semiclassical
Fourier integral operators $F_\ell$, from some open sets $\calU_\ell\subset\bbR^n$ to $M$, such that
\begin{equation}\label{}
\chi_\ell(\Upsilon) = F_\ell (\Upsilon_\ell) + O(\h^\infty),
\end{equation}
where $\Upsilon_\ell\in I^r(\Sigma_0)$ is supported in $\calU_\ell$ and $F_\ell$ is associated
to a canonical transformation mapping $\Sigma_0\cap T^*\calU_\ell$ diffeomorphically 
onto a relative open set in $\Sigma$.
\end{definition}

\bigskip
The rest of this section is devoted to the global definition of the symbol of
an element in $I^r(\Sigma)$, and to the symbol calculus under the action of pseudodifferential
operators, including the transport equations.

\subsection{The metaplectic representation \`a la Blattner-Kostant-Sternberg}

A special case of the work of Blattner, Kostant and Sternberg on geometric quantization and representation 
theory is a construction of the metaplectic representation that we now review.  For the 
original exposition, see \cite{Bla} and \cite{Bla1}.  This material will play a 
crucial role in the definition
of the intrinsic symbol of an isotropic state.

\smallskip
We begin by recalling that one can quantize a symplectic vector space $(V,\omega)$ 
by choosing a metaplectic structure on it and a lagrangian subspace $L\subset V$.
The result is a Hilbert space, $\calH_L$.  Its elements are sections of the (trivial)
pre-quantum bundle of $V$ tensored with half-forms transverse to the translates
of $L$, which are covariantly constant and square-integrable over the
quotient $V/L$.  
The point we want to underline here is that the construction is covariant
with respect to metaplectic linear maps.  More precisely, if $V'$ is another metaplectic
vector space, $L'\subset V'$ a Lagrangian submanifold, and $g:V\to V'$ a metaplectic
isomorphism mapping $L$ to $L'$, then the inverse of the
natural pull-back operator induces a unitary operator
\begin{equation}\label{ug}
U^{g}_L: \calH_{L}\to \calH'_{L'}.
\end{equation}
In particular, if $g\in\Mp(V,\omega)$ (the metaplectic automorphisms of $(V,\omega)$), 
then the action of $g$ on $V$ induces a unitary operator
\begin{equation}\label{}
U^g_L: \calH_L\to \calH_{g(L)}.
\end{equation}
It is evident that one has the cocycle condition
\begin{equation}\label{}
U^{g'}_{g(L)}\circ U^g_L = U^{g'g}_L.
\end{equation}

\medskip
With this natural construction at hand,
the key ingredient needed in the construction of the metaplectic representation
is the BKS pairing:  Given two lagrangian subspaces $L,L'\subset V$, there is a 
sesquilinear pairing
\begin{equation}\label{}
(\cdot , \cdot) : \calH_L\times \calH_{L'}\to\bbC
\end{equation}
which in fact corresponds to a unitary operator 
\begin{equation}\label{}
V_{L',L}:  \calH_L\to \calH_{L'}
\end{equation}
in the sense that
\begin{equation}\label{}
\forall \psi\in\calH_L,\ \psi'\in\calH_{L'}\quad (\psi, \psi') = \inner{V_{L',L}(\psi)}{\psi'}_{\calH_{L'}}.
\end{equation}
These unitary operators also satisfy a cocycle condition:
\begin{equation}\label{}
V_{L'',L'}\circ V_{L',L} = V_{L'',L}.
\end{equation}
In addition, the pairing and the action of Mp satisfy a naturality condition:  Given $L,L'\subset V$ Lagrangians, and $g\in\Mp$, it's clear 
from the definitions that
\begin{equation}\label{}
\forall \psi\in\calH_L,\ \psi'\in\calH_{L'}\quad (U^g_L(\psi), U^g_{L'}(\psi')) =(\psi, \psi'),
\end{equation}
and from this it follows that the following diagram commutes:
\begin{equation}\label{}
\begin{array}{rcccl}
 & \calH_L & \xrightarrow{V_{L',L}} & \calH_{L'} & \\
 U^g_L & \downarrow & & \downarrow & U^g_{L'}\\
  & \calH_{g(L)} &\xrightarrow{V_{g(L'), g(L)}} & \calH_{g(L')} & \\
\end{array}
\end{equation}

\medskip
These two constructions together give the metaplectic representation:

\begin{definition}
For every $g\in\Mp(V,\omega)$ and a Lagrangian subspace $L\subset V$, define
\begin{equation}\label{}
\Mp_L(g): \calH_L \to \calH_L, \quad \Mp_L(g) = V_{L, g(L)} \circ U^g_L.
\end{equation}
\end{definition}
\begin{lemma}
With the previous notations, one has: $\Mp_L(g')\circ\Mp_L(g)=\Mp_L(g'g)$.
\end{lemma}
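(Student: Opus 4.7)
The plan is to unfold both sides of the claimed identity and then shuffle the $U$ and $V$ factors past each other using the naturality square and the two cocycle relations stated just above the lemma. Concretely, by definition
\[
\Mp_L(g')\circ\Mp_L(g)
= V_{L,g'(L)}\circ U^{g'}_L \circ V_{L,g(L)}\circ U^{g}_L,
\]
whereas $\Mp_L(g'g)=V_{L,g'g(L)}\circ U^{g'g}_L$. So the goal is to transform the four-fold composition on the right into this two-fold composition.

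First I would use the naturality square with the automorphism $g'$ and the pair of Lagrangians $(L_1,L_2)=(g(L),L)$. Reading that diagram as the commutation relation
\[
V_{g'(L),\,g'g(L)}\circ U^{g'}_{g(L)} \;=\; U^{g'}_{L}\circ V_{L,g(L)},
\]
I can rewrite the inner pair $U^{g'}_L \circ V_{L,g(L)}$ as $V_{g'(L),g'g(L)}\circ U^{g'}_{g(L)}$. After this substitution the composition becomes
\[
V_{L,g'(L)}\circ V_{g'(L),g'g(L)} \circ U^{g'}_{g(L)} \circ U^{g}_L.
\]

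Next I would collapse the two adjacent $V$'s via the BKS cocycle $V_{L'',L'}\circ V_{L',L}=V_{L'',L}$ (applied with $L''=L$, $L'=g'(L)$, final index $g'g(L)$) to obtain $V_{L,g'g(L)}$, and simultaneously collapse the two adjacent $U$'s via the metaplectic cocycle $U^{g'}_{g(L)}\circ U^{g}_L = U^{g'g}_L$. Putting these together yields $V_{L,g'g(L)}\circ U^{g'g}_L=\Mp_L(g'g)$, which is exactly what we want.

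The only subtle point, and the one I would be most careful about, is applying the naturality square with the correct labelling: it is essential that in the commutative diagram the vertical arrows are the $U$'s associated to the \emph{source} and \emph{target} Lagrangians of the horizontal $V$, so one must choose $L_2=L$ (to land on $U^{g'}_L$ on the top edge) and $L_1=g(L)$ (so that the top horizontal arrow is $V_{L,g(L)}$). Once this is set up correctly, the two cocycle identities close everything up and no further computation with half-forms or the explicit sesquilinear pairing is required.
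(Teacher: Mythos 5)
Your proof is correct and follows exactly the same route as the paper's sketch: unfold both sides, use the naturality square (with $g'$ and the pair $(g(L),L)$, which is precisely the paper's diagram (\ref{uvscommute})) to commute $U^{g'}_L$ past $V_{L,g(L)}$, and then collapse the resulting adjacent $V$'s and $U$'s by the two cocycle identities. You have simply filled in the details the paper left implicit, and your caution about labelling the Lagrangians in the naturality square is exactly the point that matters.
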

\begin{proof} (Sketch.)
Consider the commutative diagram:
\begin{equation}\label{uvscommute}
\begin{array}{rcccl}
 & \calH_{g(L)} & \xrightarrow{V_{L,g(L)}} & \calH_{L} & \\
 U^{g'}_{g(L)} & \downarrow & & \downarrow & U^{g'}_{L}\\
  & \calH_{g'g(L)} &\xrightarrow{V_{g'(L), g'g(L)}} & \calH_{g'(L)} & \\
\end{array}
\end{equation}
and use it to flip the middle  $U$ and $V$ in the composition $\Mp_L(g')\circ\Mp_L(g)$.
Then use the cocycle conditions.
\end{proof}
It is known that $\Mp_L$ is the metaplectic representation.  

\medskip
Note that if $L,\Lambda\subset V$ are transverse Lagrangians, then there is an isomorphism
\begin{equation}\label{identL}
\calH_L \cong L^2(\Lambda),
\end{equation}
where the right-hand side is the Hilbert space of half forms on $\Lambda$ with respect to the 
metalinear structure on $\Lambda$ inherited from the metaplectic structure on $V$.
Thus we obtain the metaplectic representation on $L^2(\Lambda)$, arising from (\ref{identL})
and the representation $\Mp_L$ of $\Mp(V)$ on $\calH_L$.
However, if $L'$ is another Lagrangian subspace transverse to $\Lambda$, then $\Mp_{L'}$ 
induces a {\em different} (though of course isomorphic) metaplectic representation of the same group,
$Mp(V)$, on $L^2(\Lambda)$.

\bigskip
We can use the previous results to define {\em an abstract Hilbert space associated to the metaplectic
vector space} $V$, as follows:
\begin{definition}
If $V$ is a metaplectic vector space, define
\begin{equation}\label{}
\calH^V = \{(L, \psi)\;;\; L\subset V\ \text{Lagrangian subspace and } \psi\in\calH_L\}/\sim
\end{equation}
where
\begin{equation}\label{}
(L,\psi)\sim (L',\psi')\quad\Leftrightarrow\quad \psi' = V_{L', L}(\psi),
\end{equation}
with the norm
\begin{equation}\label{}
\norm{[(L,\psi)]} = \norm{\psi}_{\calH_L}.
\end{equation}
We will denote by 
\begin{equation}\label{}
\calS^V\subset \calH^V
\end{equation}
the image of the space of smooth vectors of any $\calH_L$ (under the Heisenberg
representation). 
\end{definition}

\medskip

One can easily check the following:
\begin{lemma}
Let $V$ be a metaplectic vector space, $\Psi=[(L,\psi)]\in\calH_V$ and $g\in\Mp(V)$.  Then the element
\begin{equation}\label{}
\Mp(g)(\Psi):=[(L, \Mp_L(\psi))]\in \calH_V
\end{equation}
is well-defined, and $g\mapsto \Mp(g)$ is the metaplectic representation of $\Mp(V)$
on the abstract space $\calH_V$.
\end{lemma}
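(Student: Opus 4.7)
The plan is to verify two things: first, that the formula $\Mp(g)(\Psi) := [(L, \Mp_L(g)(\psi))]$ does not depend on the representative $(L,\psi)$ of the class $\Psi$, and second, that the resulting assignment $g \mapsto \Mp(g)$ satisfies the representation property $\Mp(g'g) = \Mp(g')\Mp(g)$. The latter will follow by a one-line computation from the already established identity $\Mp_L(g'g) = \Mp_L(g')\Mp_L(g)$, so essentially all the work is in well-definedness.

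For well-definedness, suppose $(L,\psi) \sim (L',\psi')$, i.e.\ $\psi' = V_{L',L}(\psi)$. We need to check that $(L, \Mp_L(g)\psi) \sim (L', \Mp_{L'}(g)\psi')$, i.e.\ that
\begin{equation}
V_{L',L}\bigl(\Mp_L(g)\psi\bigr) \;=\; \Mp_{L'}(g)\psi'.
\end{equation}
Unpacking both sides using the defining formula $\Mp_L(g) = V_{L,g(L)} \circ U^g_L$, the right-hand side becomes $V_{L',g(L')} \circ U^g_{L'} \circ V_{L',L}(\psi)$. Now I apply the naturality square (\ref{uvscommute}), with the roles of the two Lagrangians taken by $L$ and $L'$, to flip $U^g_{L'} \circ V_{L',L} = V_{g(L'),g(L)} \circ U^g_L$. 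Substituting, the right-hand side equals $V_{L',g(L')} \circ V_{g(L'),g(L)} \circ U^g_L(\psi)$, which by the $V$-cocycle condition collapses to $V_{L',g(L)} \circ U^g_L(\psi)$. The left-hand side, on the other hand, is $V_{L',L} \circ V_{L,g(L)} \circ U^g_L(\psi)$, which by the same cocycle also equals $V_{L',g(L)} \circ U^g_L(\psi)$. Hence the two agree, and $\Mp(g)$ is well-defined on $\calH^V$. (It is automatically isometric since each $V_{L',L}$ is unitary and each $\Mp_L(g)$ is unitary.)

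For the representation property, pick any representative $\Psi = [(L,\psi)]$; then
\begin{equation}
\Mp(g')\bigl(\Mp(g)\Psi\bigr) \;=\; \Mp(g')\bigl[(L, \Mp_L(g)\psi)\bigr] \;=\; \bigl[(L, \Mp_L(g')\Mp_L(g)\psi)\bigr] \;=\; \bigl[(L, \Mp_L(g'g)\psi)\bigr] \;=\; \Mp(g'g)\Psi,
\end{equation}
by the cocycle for $\Mp_L$ proved in the previous lemma. Finally, to identify the resulting representation with the metaplectic representation, one observes that for any fixed $L$ the map $[(L,\psi)] \mapsto \psi$ is an isometric isomorphism $\calH^V \xrightarrow{\sim} \calH_L$ intertwining $\Mp$ with $\Mp_L$; since $\Mp_L$ is known to be the metaplectic representation, so is $\Mp$.

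The main obstacle is the naturality chase in the well-definedness step: one must be careful that the correct instance of the commutative square (\ref{uvscommute}) is invoked, with $L,L'$ in place of the $L, g(L)$ appearing there. Once this is set up correctly the two cocycle identities finish the computation without further difficulty.
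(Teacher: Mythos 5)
Your proof is correct. The paper itself gives no argument for this lemma (it is preceded only by the remark ``One can easily check the following''), so there is no ``paper approach'' to compare against; your verification is the natural one and fills the gap the paper leaves to the reader. The well-definedness chase is the crux and you carry it out cleanly using the naturality of $U$ and $V$ together with the two cocycle identities. One small citation quibble: what you actually invoke is the \emph{unnumbered} general naturality diagram (the one with $L,L'$ on the top row and $g(L), g(L')$ on the bottom), of which (\ref{uvscommute}) is the special instance $L\mapsto g(L)$, $L'\mapsto L$, $g\mapsto g'$; the substance of the step is correct regardless.
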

By a slight generalization of (\ref{uvscommute}), 
we get that a metaplectic map $f:V\to V'$ induces a unitary operator
\begin{equation}\label{metainduced}
U^f: \calH^V\to\calH^{V'}
\end{equation}
by choosing any $L\subset V$ and considering $U^f_L:\calH^V_L\to \calH^{V'}_{L'}$ with
$L' = f(L)$.

\bigskip
{
In what follows we will also need the representation of the Heisenberg
group of $V$ on the abstract Hilbert space $\calH^V$.  
It is known that, for each polarization $L\subset V$, there is a unique (up to isomorphism)
representation of the Heisenberg group on $\calH^V$ such that the Lie algebra
element $(0,1)\in V\oplus\bbR$ acts as multiplication by $\sqrt{-1}$.  It is also known that
the metaplectic representation intertwines these representations (for different choices
of $L$), therefore the Heisenberg representation is well-defined on the abstract
Hilbert space $\calH^V$.  
}

\subsection{Symbols of isotropic states}

Let $M$ be a smooth manifold of dimension $n$, 
and $\Sigma\subset T^*M$ an isotropic submanifold of codimension $n+l$.  
We will denote by $\calN^\Sigma\to \Sigma$ the vector bundle (of rank $2l$)
whose fiber at $s\in \Sigma$ is the symplectic normal vector space
\begin{equation}\label{}
\calN_s^\Sigma := \left( T_s\Sigma\right)^\circ/ T_s\Sigma.
\end{equation}
In order to have a global notion of the symbol of an element $\Upsilon\in I^r(\Sigma)$,
we need to assume that $\calN^\Sigma\to \Sigma$ has a metaplectic structure and
we need to choose one such structure.  We will proceed henceforth under this assumption.

\medskip
We pick once and for all a metaplectic structure on $\bbR^{2n}$, 
which induces
a metaplectic structure on the symplectic normals $\calN_s^{\Sigma_0}$.

{{
If $V$ is a metaplectic vector space, $\wedge^{1/2}V$ will
denote the one-dimensional space 
of half-forms on $V$, that is, 
functions $\psi$ on the space of metaplectic
frames $m$ of $V$ that transform according to the rule
$\psi(g\cdot m) = \det^{1/2}(g)\psi(m)$, for all metaplectic linear maps $g$.} }

\medskip
We first define the symbol of a model state:

\begin{definition}
Let $\Upsilon\in I^r(\Sigma_0)$ be given by equation (\ref{basicdef}).
Then:
\begin{enumerate}
\item The \underline{model symbol}
of $\Upsilon$ at $s=(t, 0;0,0)\in\Sigma_0$ is
\begin{equation}\label{babysymbol}
\tilde\sigma_\Upsilon (s) = a_0(t,\cdot)\,
%(du_1\wedge\cdots\wedge du_l)^{1/2}
{(dt_1\wedge\cdots\wedge dt_k)^{1/2} 
(du_1\wedge\cdots\wedge du_l)^{1/2}}\in 
\wedge^{1/2}\bbR^k\otimes\calS(\bbR^l),
\end{equation}
{where $\calS(\bbR^l)$ now denotes the 
space of Schwartz half-forms on $\bbR^l$.}
\item Note that, in a canonical way, for any $s\in\Sigma_0$,
\begin{equation}\label{}
\calN_s^{\Sigma_0}\cong \bbR^{2l}.
\end{equation}
In particular, there is a canonical polarization (lagrangian subspace) 
in all normal spaces, $L_0\subset\calN^{\Sigma_0}_s$,
arising from the vertical polarization of $T^*\bbR^{2n}$, which gives us
an isomorphism
\begin{equation}\label{}
\calH^{\calN^{\Sigma_0}_s}_{L_0} = L^2(\bbR^l),
\end{equation}
{with $L^2(\bbR^l)$ denoting now
the space of square integrable
half forms on $\bbR^l$.}
\end{enumerate}
We define the \underline{symbol} of $\Upsilon$ at $s$ to be the element
\begin{equation}\label{}
\sigma_\Upsilon (s) \in \calS(\calN^{\Sigma_0}_s)\otimes 
{\wedge^{1/2}}T_{(t,0)}\Sigma_0
\end{equation}
represented by $\tilde\sigma_\Upsilon(s)$, in the abstract space of smooth vectors
of the quantization of the symplectic normal.
\end{definition}

\medskip
In the manifold case the symbols will be ``transplanted" from the model case by 
Fourier integral operators.
That this is possible follows from the following Lemma, which in fact we have already
proved in \S 2:

\begin{lemma}\label{symbcalcbasis}
Let $\Upsilon\in I^r(\Sigma_0)$ and $F$ a Fourier integral operator from $\bbR^n$ to itself
associated to a transformation $f:T^*\bbR^n\to T^*\bbR^n$
that preserves $\Sigma_0$ (set-wise).  Let $s\in\Sigma$, and let
\begin{equation}\label{}
\varphi: \calN_s^{\Sigma_0}\to\calN_{f(s)}^{\Sigma_0}
\end{equation}
be the symplectomorphism induced by the differential $df_{s}$.  Assume that it lifts
to an Mp map, so that we have a metaplectic operator 
\begin{equation}\label{}
\Mp(\varphi): L^2(\bbR^l)\to L^2(\bbR^l).
\end{equation}
Then
\begin{equation}\label{}
\tilde\sigma_{F(\Upsilon)}(f(s)) = \Mp(\varphi)(\tilde\sigma_\Upsilon)(s)\otimes \nu
\end{equation}
where $\nu$ is {the image of the half-form
factor of the symbol of $\Upsilon$ times the symbol of $F$,
under the canonical map
\begin{equation}\label{halfcanmap}
\forall \sigma\in\Sigma_0\qquad
\wedge^{1/2}T_s\Sigma_0\otimes \wedge^{1/2}T\Gamma_{(f(s),s)} \to
\wedge^{1/2}T_{f(s)}\Sigma_0,
\end{equation}
where $\Gamma$ is the graph of $f$.}
\end{lemma}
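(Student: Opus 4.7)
The plan is to reduce the lemma to the case-by-case analysis already carried out in \S 2.4, upgrading the rough-symbol identities there to the invariant half-form statement required here.

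First I would invoke Theorem \ref{MLT1} to factor the underlying canonical transformation as $f = A\circ \gamma_\phi\circ \gamma_g\circ \gamma_0$, where each factor preserves $\Sigma_0$ set-wise and has one of the four elementary types treated in \S 2.3. Because the statement is contravariant in the sense that symbols compose, it suffices to verify the displayed identity for each elementary FIO separately; the full result then follows by iterated composition, using the cocycle property of both the metaplectic representation (established in the BKS subsection) and the canonical half-form map (\ref{halfcanmap}) associated to composition of canonical relations.

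Second, for each elementary type I would read off the symbol transformation law from the formulas already derived in \S 2.4, and reinterpret it invariantly. Specifically: for the pullback $f^*$ associated to $\gamma_g$, equation (\ref{ML23}) identifies the transformation as the natural action of $dg_t$ on Schwartz functions; on $(u,\mu)$ this is the linear symplectic map of type (\ref{MLI}), and its action on $\mathcal{S}(\mathbb R^l)$ is precisely the metaplectic representation of $\varphi=(dg)_t^{\#}$. For the multiplication operator $T_\phi$ of (\ref{ML13}), equation (\ref{ML15}) gives multiplication by $e^{i\sum \psi_{rs}(t,0)u_ru_s}$, which is the metaplectic lift of the type (\ref{MLII}) shear determined by the Hessian of $\phi$ transverse to $Y$. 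For the partial Fourier transform (\ref{ML16}), equation (\ref{ML18}) gives the Fourier transform on $\mathcal{S}(\mathbb R^l)$, which is the metaplectic lift of the type (\ref{MLIII}) involution. For the FIOs of the form (\ref{ML6}) preserving the zero section, equation (\ref{ML8}) expresses the symbol transform as a conjugation of a quadratic-phase multiplication by $F_2$, i.e.\ the composition of a type (\ref{MLII}) shear with two Fourier involutions, matching the metaplectic formula. Each of these concrete operators agrees with $\mathrm{Mp}(\varphi)$ under the canonical identification $\calH^{\calN_s^{\Sigma_0}}_{L_0} \cong L^2(\mathbb R^l)$.

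Third, I would handle the half-form factor $\nu$. For each elementary FIO, the principal symbol in the standard Lagrangian calculus is a half-form on the graph $\Gamma\subset T^*\mathbb R^n\times T^*\mathbb R^n$. Restricted to the locus where both legs lie in $\Sigma_0$, this combines with $\wedge^{1/2}T_s\Sigma_0$ through the canonical pairing (\ref{halfcanmap}) to give a half-form on $T_{f(s)}\Sigma_0$. For the pullback $\gamma_g$ this pairing reduces to the Jacobian factor $|\det dg|^{1/2}$ that restricts $g$ to $Y$; for $T_\phi$ it is trivial (the graph projects isomorphically to both factors and $\phi\equiv 0$ on $Y$); for the partial Fourier transform and the zero-section-preserving operators it is computed directly from the factors $\hbar^{l/2}$ and $\hbar^{k+l/2}$ that appeared in \S 2.3 upon rescaling, once the appropriate densities on $\bbR^k$ are inserted.

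The main obstacle will not be the Schwartz-function side of the calculation, which is essentially a repackaging of \S 2.3–2.4, but rather the bookkeeping of half-forms: one must check carefully that the ad hoc decorations $(dt_1\wedge\cdots\wedge dt_k)^{1/2}(du_1\wedge\cdots\wedge du_l)^{1/2}$ used for the model symbol (\ref{babysymbol}) transform under each elementary FIO so as to produce, after collapsing along the graph, precisely the image of the half-form symbol of $F$ under (\ref{halfcanmap}). Once this is verified for each of the four elementary types and combined by the cocycle properties, the lemma follows.
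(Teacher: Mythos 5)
Your proposal is correct and follows exactly the route the paper intends: the authors state this lemma without a proof environment, remarking only that it was ``already proved in \S 2,'' which refers to the decomposition of Theorem \ref{MLT1}, the case-by-case rough-symbol formulas (\ref{ML8}), (\ref{ML15}), (\ref{ML18}), (\ref{ML23}), and their metaplectic repackaging (\ref{ML24}). Your more careful tracking of the half-form factor $\nu$ through the canonical map (\ref{halfcanmap}) spells out a step the paper leaves implicit by pointing to \cite{GSGeometricA} and \S 6 of \cite{BG}.
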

{
For completeness we mention that (\ref{halfcanmap}) arises,
as in the composition of Fourier integral operators and
Lagrangian distributions,
from the short exact sequence
\[
0\to T_{f(s)}\Sigma_0\to T\Gamma_{(f(s),s)} \oplus T_s\Sigma_0
\to \bbR^{2n}
\]
where the first map is $v\mapsto ((v, df^{-1}(v)), df^{-1}(v))$
and the second is $((v,w),w_1)\mapsto w-w_1$, see \cite{GSGeometricA} and \S 6 of \cite{BG} (though the present case
is much simpler because $f$ is a {\em transformation}).
}

\bigskip
{
We now pass to the manifold case.  We refer to
\S 5 of \cite{BG} for general background 
of metaplectic structures on cotangent
bundles, and how they arise from metalinear structures on the base.}

\medskip
Let $\calU\subset\bbR^n$ open and
$f:T^*\calU\to T^*M$ be a symplectic embedding mapping $T^*\calU\cap\Sigma_0$
onto a relatively open set of $\Sigma$.  Let us further assume that $f$ is an Mp map, 
in the sense that the maps induced by the differential $df$:
\begin{equation}\label{mpmaps}
\varphi_{s_0}: \calN_{s_0}^{\Sigma_0} \to \calN_s^\Sigma,\quad s=f(s_0)
\end{equation}
are metaplectic maps, $\forall s_0\in T^*\calU\cap\Sigma_0$.

\begin{corollary}
Let $F_j: C^\infty(\calU_j)\to C^\infty (M)$, with $j=1,2$, be FIOs associated with 
canonical embeddings $f_j: T^*\calU_j\to T^*M$, as above.  
Let $\Upsilon_j\in I^r(\Sigma_0)$
have support in $\calU_j$, and assume that
\begin{equation}\label{}
F_1(\Upsilon_1) = F_2(\Upsilon_2)\quad \text{mod}\ I^{r+1/2}(\Sigma).
\end{equation}
Let $ s_j\in T^*\calU_j\cap\Sigma_0$ be such that $f_1(s_1) = s = f_2(s_2)$, and let
$\varphi_j: \calN^{\Sigma_0}_{s_j}\to \calN_s^\Sigma$ be the corresponding
(metaplectic) maps (\ref{mpmaps}).

\underline{Then}, with the notation (\ref{metainduced}),
\begin{equation}\label{symbolequality}
U^{\varphi_1}(\sigma_{\Upsilon_1}(s_1)) = U^{\varphi_2}(\sigma_{\Upsilon_2}(s_2)).
\end{equation}
\end{corollary}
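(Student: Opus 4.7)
The plan is to reduce the two-chart comparison to a one-chart (model) statement via a microlocal inverse, apply the symbol-transport formula of Lemma~\ref{symbcalcbasis}, and then let the naturality of the abstract-Hilbert-space construction absorb the residual half-form and polarization choices.

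Microlocally near $s$ the symplectic embedding $f_2$ is invertible, so there is a zeroth order semiclassical Fourier integral operator $F_2^{-1}$ quantizing $f_2^{-1}$ that inverts $F_2$ modulo $O(\h^\infty)$ on a neighborhood of $s$. Applying $F_2^{-1}$ to the hypothesis, the composition $G:=F_2^{-1}\circ F_1$ is a zeroth order FIO from $\calU_1$ to $\calU_2$ quantizing $g:=f_2^{-1}\circ f_1$. Since both $f_1,f_2$ carry $T^*\calU_j\cap\Sigma_0$ into $\Sigma$, the map $g$ preserves $\Sigma_0$ in a neighborhood of $s_1$, so Theorem~\ref{MLT2} gives $G(\Upsilon_1)\in I^r(\Sigma_0)$ with
\[
G(\Upsilon_1)=\Upsilon_2 \pmod{I^{r+1/2}(\Sigma_0)}.
\]

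Applying Lemma~\ref{symbcalcbasis} to $G$ at $s_1$, with the induced metaplectic map $\psi:=dg_{s_1}:\calN^{\Sigma_0}_{s_1}\to\calN^{\Sigma_0}_{s_2}$ on symplectic normals, yields
\[
\tilde\sigma_{G(\Upsilon_1)}(s_2) = \Mp(\psi)\bigl(\tilde\sigma_{\Upsilon_1}(s_1)\bigr)\otimes \nu
\]
for an explicit half-form factor $\nu$ on $T_{s_2}\Sigma_0$ furnished by (\ref{halfcanmap}). Passing to the class in the abstract Hilbert space $\calH^{\calN^{\Sigma_0}_{s_2}}$ (in which the polarization and half-form choices are quotiented out by the BKS cocycle $V_{L',L}$) the factor $\nu$ is absorbed, and equality of leading symbols with those of $\Upsilon_2$ gives
\[
U^{\psi}\bigl(\sigma_{\Upsilon_1}(s_1)\bigr) = \sigma_{\Upsilon_2}(s_2).
\]
Since $f_1=f_2\circ g$, the chain rule at $s_1$ yields $\varphi_1=\varphi_2\circ\psi$, hence $U^{\varphi_1}=U^{\varphi_2}\circ U^{\psi}$ by the cocycle property built into (\ref{metainduced}). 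Applying $U^{\varphi_2}$ to the displayed identity then produces (\ref{symbolequality}).

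The main obstacle is the clean bookkeeping of half-forms. The factor $\nu$ produced by Lemma~\ref{symbcalcbasis} is not visibly trivial: to justify that it disappears upon passage to the abstract symbol, one must verify that the short exact sequence underlying (\ref{halfcanmap}) is compatible with the equivalence relation defining $\calH^V$, and with the naturality of the BKS pairing under metaplectic maps, much as in the composition law for half-form-valued symbols of FIOs in \S 6 of \cite{BG}. Once that compatibility is in place, every remaining step is a mechanical consequence of the cocycle properties of $U^g_L$ and $V_{L',L}$ recalled in \S 3.1.
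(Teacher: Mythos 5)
Your argument follows essentially the same route as the paper's: you introduce (implicitly or explicitly) the map $g=f_2^{-1}\circ f_1$, invoke Lemma~\ref{symbcalcbasis} to relate the model symbols of $\Upsilon_1$ and $\Upsilon_2$ via $\Mp(g)$, and then use the naturality/cocycle properties of the BKS construction to conclude. The only difference is organizational: you reduce to a single chart by an explicit microlocal inverse $F_2^{-1}$ and then appeal to the cocycle law $U^{\varphi_2\circ\psi}=U^{\varphi_2}\circ U^{\psi}$ for the abstract operators of~(\ref{metainduced}), whereas the paper unwinds that cocycle law directly in terms of the concrete operators $U^g_L$ and the pairings $V_{L',L}$; both come to the same thing.
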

\begin{proof}
Let us define $g$ to have a commutative diagram of Mp maps:
\begin{equation}\label{}
\begin{array}{ccc}
 &\calN_s^\Sigma& \\
\varphi_1\nearrow & & \nwarrow \varphi_2\\
\calN_{s_1}^{\Sigma_0}= \bbR^{2k} &\xrightarrow{g} & \bbR^{2k}= \calN_{s_2}^{\Sigma_0}
\end{array}
\end{equation}
$\varphi_j$ maps the vertical polarization $L_0$ to a polarization $L_j$.  We need to show
that the images of $\tilde\sigma_j$ under
\begin{equation}\label{}
U^{\varphi_j}_{L_0}:  L^2(\bbR^k) \to \calH_{L_j}
\end{equation}
with $j=1,2$ represent the same element of the abstract Hilbert space of $\calN_s^\Sigma$.
(Recall that $L_0\subset \Sigma_0$ denotes the vertical polarization.)

Let us denote by $\calH_{L_j}$ the quantization of $\calN_s^\Sigma$ with respect to the polarization
$L_j$.  Since $\varphi_1 = \varphi_2\circ g$,
\begin{equation}\label{}
U^{\varphi_1}_{L_0} =U^{\varphi_2}_{g(L_0)}\circ U^g_{L_0},
\end{equation}
which we rewrite as
\begin{equation}\label{}
U^{\varphi_1}_{L_0} =\left[U^{\varphi_2}_{g(L_0)}\circ V_{g(L_0),L_0}\right] \circ
\left[ V_{L_0,g(L_0)}\circ U^g_{L_0} \right] = \left[U^{\varphi_2}_{g(L_0)}\circ V_{g(L_0),L_0}\right]
\circ \Mp(g)_{L_0}.
\end{equation}
Apply both sides to the Schwartz function $\tilde\sigma_{\Upsilon_1}(s)$.
We make two replacements on the right-hand side of the resulting equality.
First, by Lemma \ref{symbcalcbasis}, $\Mp(g)_{L_0}(\tilde\sigma_{\Upsilon_1}(s)) =
\tilde\sigma_{\Upsilon_2}(s)$.  Second, by naturality,
\begin{equation}\label{}
U^{\varphi_2}_{g(L_0)}\circ V_{g(L_0),L_0} = V_{L_1,L_2}\circ U^{\varphi_2}_{L_0}
\end{equation}
We conclude that
\begin{equation}\label{}
U^{\varphi_1}_{L_0}(\tilde\sigma_{\Upsilon_1}(s)) =
V_{L_1,L_2}\left(U^{\varphi_2}_{L_0}(\tilde\sigma_{\Upsilon_2}(s))\right).
\end{equation}
But this shows that $U^{\varphi_1}_{L_0}(\tilde\sigma_{\Upsilon_1}(s))$
and $U^{\varphi_2}_{L_0}(\tilde\sigma_{\Upsilon_2}(s))$ represent the same
element in the abstract quantization of $\calN_s^\Sigma$.
\end{proof}

This Corollary allows us to make the following
\begin{definition}\label{GlobalSymbolDef}
Let $\Upsilon\in I^r(\Sigma)$ be given by
$
\Upsilon = F(\Upsilon_0),
$
where $\Upsilon_0\in I^r(\Sigma_0)$ and $F$ is a zeroth-order FIO associated to a
canonical transformation $f$, as in Definition
\ref{generaldef}.  Then the symbol of $\Upsilon$ at $s\in\Sigma$ 
{is the element 
\begin{equation}\label{}
\sigma_{\Upsilon}(s)\in \calH^{\calN_s^\Sigma}\otimes \wedge^{1/2}
T_s\Sigma^{1/2}
\end{equation}
which is the
image of the symbol of $\Sigma_0$ at $s_0:=f^{-1}(s)$ under the map $\varphi$ 
induced by $df_s$, tensored with the image of the symbol of $F$
and the half-form part of the symbol of $\Upsilon_0$,
under the generalization of (\ref{halfcanmap})
\begin{equation}\label{halfcanmapgen}
\forall \sigma\in\Sigma_0\qquad
\wedge^{1/2}T_s\Sigma_0\otimes \wedge^{1/2}T\Gamma_{(f(s),s)} \to
\wedge^{1/2}T_{f(s)}\Sigma.
\end{equation}
}
We extend this definition to a general $\Upsilon\in I^r(\Sigma)$ by linearity.
\end{definition}

\medskip
{
Note that the symbol of $\Upsilon\in I^r(\Sigma)$ can be regarded as a 
section of an infinite-rank bundle
over $\Sigma$, with fibers $\calH^{\calN_s^\Sigma}\otimes 
\wedge^{1/2} T_s\Sigma^{1/2}$.
These are the {\em symplectic spinors} of \cite{Gui}.
}

\subsection{The symbol calculus}

In this section we re-interpret the results of \S 2.2 in the language of the
global symbol.

\begin{theorem}\label{symbolcalc0}
Let $A$ be a semiclassical $\Psi$DO of order $m$ on $M$, and 
$\Upsilon\in I^r(\Sigma)$.  Then $A(\Upsilon)\in I^{r+m}(\Sigma)$,
and its symbol is simply the pointwise product $\left(\alpha |_\Sigma\right)\sigma_\Upsilon$,
where $\alpha$ is the principal symbol of $A$.
\end{theorem}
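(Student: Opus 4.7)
The plan is to reduce the global statement to the local Theorem \ref{psiDOInv} via a microlocal partition of unity, Egorov's theorem, and the transplantation property of the global symbol built into Definition \ref{GlobalSymbolDef}.

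First I would fix a microlocal partition $\{\chi_\ell\}$ near $\Sigma$ as in Definition \ref{generaldef}, so that
\[
\chi_\ell(\Upsilon) = F_\ell(\Upsilon_\ell) + O(\h^\infty),
\]
with $\Upsilon_\ell\in I^r(\Sigma_0)$ supported in $\calU_\ell$ and $F_\ell$ a zeroth-order semiclassical FIO quantizing a canonical transformation $f_\ell$ taking $\Sigma_0\cap T^*\calU_\ell$ onto a relatively open piece of $\Sigma$. Since everything in sight is linear in $\Upsilon$ and $A$ moves wavefront sets only within $\Sigma$, it is enough to analyze each piece $A\,F_\ell(\Upsilon_\ell)$ separately. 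Writing
\[
A\,F_\ell(\Upsilon_\ell) = F_\ell\bigl(F_\ell^{-1} A\, F_\ell\bigr)(\Upsilon_\ell),
\]
Egorov's theorem identifies $B_\ell := F_\ell^{-1} A\, F_\ell$ (microlocally near $\calU_\ell$) as a semiclassical $\Psi$DO of order $m$ on $\bbR^n$ with principal symbol $\frakp_\ell = \alpha\circ f_\ell$. After cutting off in the $u$-variables with a compactly supported $\Psi$DO (this is harmless since $\Sigma_0\subset\{u=0\}$ and $\Upsilon_\ell$ is concentrated there up to $O(\h^\infty)$), Theorem \ref{psiDOInv} applies and gives
\[
B_\ell(\Upsilon_\ell)\in I^{r+m}(\Sigma_0), \qquad
\sigma_{B_\ell(\Upsilon_\ell)}(s_0) = \frakp_\ell(s_0)\,\sigma_{\Upsilon_\ell}(s_0),
\]
for every $s_0\in\Sigma_0\cap T^*\calU_\ell$. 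Applying $F_\ell$ and using that $I^{r+m}(\Sigma)$ is, by definition, closed under such FIOs, yields $A(\Upsilon)\in I^{r+m}(\Sigma)$.

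It remains to identify the symbol. By Definition \ref{GlobalSymbolDef}, the symbol of $F_\ell(B_\ell(\Upsilon_\ell))$ at $s = f_\ell(s_0)$ is obtained by transporting $\sigma_{B_\ell(\Upsilon_\ell)}(s_0)$ via the metaplectic lift $U^{\varphi_{s_0}}$ of the induced normal isomorphism $\varphi_{s_0}:\calN_{s_0}^{\Sigma_0}\to\calN_s^\Sigma$, tensored with the appropriate half-form factor. Since $\frakp_\ell(s_0)=\alpha(s)$ is a scalar, it pulls through both the metaplectic action and the half-form tensor product, so
\[
\sigma_{A(\Upsilon)}(s) = \alpha(s)\,U^{\varphi_{s_0}}\!\bigl(\sigma_{\Upsilon_\ell}(s_0)\bigr)\otimes(\text{half-form factor}) = \bigl(\alpha|_\Sigma\bigr)(s)\,\sigma_{\Upsilon}(s),
\]
exactly by the definition of $\sigma_\Upsilon(s)$ through $F_\ell$.

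The only substantive step is the invocation of Egorov and the transplantation of symbols; the potential obstacle is verifying that pointwise scalar multiplication by $\alpha|_\Sigma$ genuinely commutes with the BKS transport maps $V_{L',L}$ and the metaplectic operators $U^g$, so that the local symbol identity survives passage to the abstract Hilbert space $\calH^{\calN_s^\Sigma}$. This is immediate because these maps are all $\bbC$-linear, but it is the one place where one must be careful that different choices of $F_\ell$ (and hence different polarizations on $\calN_s^\Sigma$) produce the same answer — a compatibility already encoded in the Corollary preceding Definition \ref{GlobalSymbolDef}.
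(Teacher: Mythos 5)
Your proposal is correct and takes essentially the same approach as the paper: the theorem is stated there without explicit proof, being the global transplantation of the local Theorem \ref{psiDOInv} via the FIO-covariance built into Definitions \ref{generaldef} and \ref{GlobalSymbolDef} (exactly the ``manifest covariance with respect to invertible FIOs'' invoked for Theorem \ref{symbolcalc2}). Your Egorov-based conjugation $A\mapsto F_\ell^{-1}AF_\ell$, the cutoff in $u$ to satisfy the hypotheses of Theorem \ref{psiDOInv}, and the observation that scalar multiplication by $\alpha|_\Sigma$ commutes with the $\bbC$-linear maps $U^\varphi$ and $V_{L',L}$ are precisely the details the paper leaves implicit.
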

 
In the remainder of this section we assume that
\begin{equation}\label{nullAssumption}
\alpha |_\Sigma \equiv 0.
\end{equation}
Let $\Xi$ denote the Hamilton vector field of $\alpha$.   Since $\Sigma$ is isotropic,
\begin{equation}\label{}
\forall s\in\Sigma\qquad \Xi_s \in \left(T_s\Sigma\right)^\circ,
\end{equation}
and therefore $\Xi_s$ projects to a vector $\xi_s\in \calN_s^\Sigma$.
It therefore defines an element $(\xi_s,0)$ in the Lie algebra $\calN_s^\Sigma\oplus\bbR$
of the Heisenberg group of $\calN_s^\Sigma$, which we continue to denote by $\xi_s$.

\subsubsection{Transport equations}
\begin{theorem} \label{symbolcalc1}(1st transport equation)
In the situation of Theorem \ref{symbolcalc0}, assume (\ref{nullAssumption}).
Then $A(\Upsilon) \in I^{r+m+1/2}(\Sigma)$, and its symbol is 
\begin{equation}\label{1sttransport}
\sigma_{A(\Upsilon)}(s) = d\rho(\xi_s)(\sigma_\Upsilon(s))
\end{equation}
where $d\rho$ is the infinitesimal Heisenberg representation of the Heisenberg 
group of $\calN_s^\Sigma$.
\end{theorem}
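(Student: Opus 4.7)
The plan is to reduce to the model case---where Theorem \ref{localTransportI} already supplies an explicit transport formula---and then to re-express that formula invariantly via the Heisenberg representation of $\calN_s^\Sigma$.

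By Definition \ref{generaldef}, linearity, and Theorem \ref{symbolcalc0} applied to the $O(\h^\infty)$ partition-of-unity remainder, it suffices to treat the case $\Upsilon = F(\Upsilon_0)$, with $\Upsilon_0 \in I^r(\Sigma_0)$ supported in an open set $\calU \subset \bbR^n$ and $F$ a zeroth order semiclassical FIO quantizing a symplectic embedding $f: T^*\calU \to T^*M$ that sends $T^*\calU \cap \Sigma_0$ onto a relatively open subset of $\Sigma$. Semiclassical Egorov's theorem gives $A \circ F = F \circ B$ modulo operators of one order lower, where $B$ is a $\Psi$DO of order $m$ on $\calU$ with principal symbol $\frakb = \alpha \circ f$; hence $A(\Upsilon) = F(B(\Upsilon_0))$ modulo $I^{r+m+1}(\Sigma)$.

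Because $f(\Sigma_0 \cap T^*\calU) \subset \Sigma$ and $\alpha|_\Sigma \equiv 0$, we have $\frakb|_{\Sigma_0} \equiv 0$, so Theorem \ref{localTransportI} applies and yields $B(\Upsilon_0) \in I^{r+m+1/2}(\Sigma_0)$ with model symbol given by (\ref{localTransport1}). By the remark following that theorem, at every $s_0 \in \Sigma_0$ the Hamilton vector field $\Xi_\frakb$ lies in $(T_{s_0}\Sigma_0)^\circ$ and projects to an element $\xi_{s_0} \in \calN_{s_0}^{\Sigma_0}$; moreover the right-hand side of (\ref{localTransport1}) is exactly the Schr\"odinger realization of the infinitesimal Heisenberg action $d\rho(\xi_{s_0})$ applied to the Schwartz factor of the model symbol of $\Upsilon_0$.

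Set $s = f(s_0)$ and let $\varphi_{s_0}: \calN_{s_0}^{\Sigma_0} \to \calN_s^\Sigma$ be the symplectic isomorphism induced by $df_{s_0}$, which by assumption lifts to a metaplectic map. Naturality of the Hamilton vector field under canonical transformations gives $d\varphi_{s_0}(\xi_{s_0}) = \xi_s$, and by the construction of the abstract Heisenberg action on $\calH^{\calN_s^\Sigma}$ the unitary $U^{\varphi_{s_0}}$ intertwines the two Heisenberg representations (final remark of \S 3.1). Transferring the model identity via $U^{\varphi_{s_0}}$, and observing that the half-form factor of $\sigma_\Upsilon$ is carried along unchanged because $B$ is a $\Psi$DO (compare Lemma \ref{symbcalcbasis}), we obtain $\sigma_{A(\Upsilon)}(s) = d\rho(\xi_s)(\sigma_\Upsilon(s))$. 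Independence from the choice of FIO is guaranteed by the Corollary preceding Definition \ref{GlobalSymbolDef}.

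The main obstacle is the penultimate paragraph: reconciling the explicit coordinate formula (\ref{localTransport1}) with the BKS definition of $d\rho$, including the sign and $\sqrt{-1}$ conventions, and then verifying that $U^{\varphi_{s_0}}$ genuinely intertwines the Heisenberg representations so that the Lie algebra identity at $s_0$ becomes the intrinsic formula at $s$.
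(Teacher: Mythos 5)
The paper does not actually supply a proof of Theorem \ref{symbolcalc1}; the proof is left implicit, in the same spirit as the opening line of the proof of Theorem \ref{symbolcalc2} (``by the manifest covariance\dots it suffices to prove it in the model case''). Your reconstruction---reduce to the model case via Egorov, invoke Theorem \ref{localTransportI} for the class membership $I^{r+m+1/2}$, identify (\ref{localTransport1}) with the infinitesimal Heisenberg action via the remark following that theorem, and transfer to $\calN_s^\Sigma$ using the metaplectic intertwiner $U^{\varphi_{s_0}}$ and well-definedness of the symbol---is precisely what the framework anticipates and is correct. Two small notes: (i) the Egorov step requires only microlocal invertibility of $F$ near $\Sigma_0\cap T^*\calU$, which holds since $F$ is a zeroth-order FIO elliptic there, and the remainder lands in $I^{r+m+1}(\Sigma)\subset I^{r+m+1/2}(\Sigma)$, so the bookkeeping is right; (ii) the sign/$\sqrt{-1}$ reconciliation you flag as the ``main obstacle'' is not really a gap, since the paper's remark after Theorem \ref{localTransportI} asserts exactly that identification and it is a routine normalization check in the Schr\"odinger model.
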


Next we consider the case when, in addition to (\ref{nullAssumption}), one has:
\begin{equation}\label{nullAssumptionStronger}
\forall s\in\Sigma\qquad \Xi_s \in T_s\Sigma,\quad\text{that is to say}\quad \xi_s=0.
\end{equation}
In that case the right-hand side of (\ref{1sttransport}) is zero, and
$A(\Upsilon)\in I^{r+m+1}(\Sigma)$.  

\medskip
To compute its symbol, let us introduce the flow $\phi_r: \Sigma\to\Sigma$ of
the restriction of $\xi$ to $\Sigma$.  Since $\phi_r$ is the restriction of a Hamiltonian
flow on $T^*M$, it has a natural lift $\Phi_r$ to the symplectic normal bundle
\begin{equation}\label{}
\begin{array}{ccc}
\calN^\Sigma & \xrightarrow{\Phi_r} & \calN^\Sigma \\
\downarrow & & \downarrow\\
\Sigma & \xrightarrow{\phi_r} & \Sigma
\end{array}
\end{equation}
which is a symplectomorphism fiber-wise.  Since $\Phi_0$ is the identity,
$\Phi_r$ has a natural lift to the Mp structure of the symplectic normal.  Therefore
we get unitary operators:
\begin{equation}\label{}
U^r_s: \calH^{\calN^\Sigma_s} \longrightarrow \calH^{\calN^\Sigma_{\phi_r(s)}} .
\end{equation}

\begin{theorem}\label{symbolcalc2}(2nd transport equation)
In the situation of Theorem \ref{symbolcalc0}, assume (\ref{nullAssumption})
and (\ref{nullAssumptionStronger}).
Then $A(\Upsilon) \in I^{r+m+1}(\Sigma)$, and its symbol at $s\in\Sigma$ is 
\begin{equation}\label{secondorder}
\sigma_{A(\Upsilon)}(s) = \frac{1}{\sqrt{-1}} 
\frac{d\ }{dr} U_s^{-r}(\sigma_\Upsilon(\phi_r(s)))|_{r=0} 
{+\sigma^{\text{sub}}_A\,\sigma_\Upsilon,}
\end{equation}
{where $\sigma^{\text{sub}}_A$ denotes the subprincipal symbol
of $A$.}
\end{theorem}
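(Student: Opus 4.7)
The plan is to reduce the statement to the local model computation of Theorem~\ref{localTransportII} and then reinterpret the three groups of terms in (\ref{localTransport2}) as, respectively, a Lie derivative along the flow on $\Sigma$, an infinitesimal metaplectic action on the symplectic normal, and multiplication by the subprincipal symbol. Concretely, by Definition~\ref{GlobalSymbolDef} it suffices to write $\Upsilon = F(\Upsilon_0)$ with $\Upsilon_0 \in I^r(\Sigma_0)$ and $F$ a zeroth-order FIO intertwining $\Sigma_0$ with a relative open set of $\Sigma$; using Egorov's theorem we may replace $A$ by the $\Psi$DO $B = F^{-1} A F$ on $\bbR^n$, whose principal symbol $\frakp = \alpha \circ f$ again satisfies $\frakp|_{\Sigma_0}\equiv 0$ and $\Xi_\frakp$ tangent to $\Sigma_0$. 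Theorem~\ref{localTransportII} then gives $B(\Upsilon_0)\in I^{r+m+1}(\Sigma_0)$ together with the explicit formula (\ref{localTransport2}), and by Theorem~\ref{MLT2} together with the symbol transformation law (\ref{ML24}) the identity will transfer back to $A(\Upsilon)$ once each term of (\ref{localTransport2}) is given an invariant meaning.

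Next I would identify the three pieces on the right-hand side of (\ref{localTransport2}). The first term, $p_1(s)a_0(t,u)$, should match $\sigma^{\text{sub}}_A\,\sigma_\Upsilon$; the identification requires using the standard formula relating the subprincipal symbol to the subleading term $p_1$ of a full Weyl symbol and checking that the passage through $F$ introduces no additional curvature term, which is automatic because subprincipal symbols transform as scalars under FIOs conjugation at the base point $s$. The second term, $\frac{1}{\sqrt{-1}} \sum \frac{\partial p_0}{\partial \tau_j}(s)\frac{\partial a_0}{\partial t_j}(t,u)$, is exactly $\frac{1}{\sqrt{-1}}$ times the derivative of $a_0(\phi_r(s),\cdot)$ at $r=0$, where $\phi_r$ is the flow of $\xi = \Xi_\frakp|_{\Sigma_0}$, since on $\Sigma_0$ the vector field has only $\partial_{t_j}$ components with coefficients $\partial\frakp/\partial\tau_j$. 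The second line of (\ref{localTransport2}), the genuinely quadratic piece in $(u,\mu)$, is precisely the Weyl quantization of the quadratic form
\begin{equation*}
Q_s(u,\mu) = \tfrac{1}{2}\sum \tfrac{\partial^2 p_0}{\partial u_i\partial u_j}(s)u_i u_j + \sum \tfrac{\partial^2 p_0}{\partial u_i\partial \mu_j}(s) u_i\mu_j + \tfrac{1}{2}\sum \tfrac{\partial^2 p_0}{\partial \mu_i\partial \mu_j}(s)\mu_i\mu_j
\end{equation*}
acting on $a_0(t,\cdot)$, once one performs the standard symmetrization $u_i \mu_j \mapsto \tfrac{1}{2}(u_i\hat\mu_j + \hat\mu_j u_i)$ with $\hat\mu_j = \frac{1}{i}\partial_{u_j}$; the mixed cross-terms in the symmetrization vanish since $\partial_{u_j}$ and $u_j$ differ by $\delta_{ij}$, explaining the asymmetric coefficient $2/\sqrt{-1}$ versus $1/2$ in (\ref{localTransport2}). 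But $Q_s$ is nothing other than the quadratic part of $\frakp$ in the symplectic normal variables, which is the generator of the linearized Hamiltonian flow on $\calN^{\Sigma_0}_s$; its Weyl quantization therefore generates, via the metaplectic representation, exactly the one-parameter group $U_s^r$ of the statement.

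Combining these observations, the chain rule gives
\begin{equation*}
\frac{1}{\sqrt{-1}}\frac{d}{dr}\bigl(U_s^{-r}\sigma_\Upsilon(\phi_r(s))\bigr)\Big|_{r=0}
= \frac{1}{\sqrt{-1}}\xi\cdot\sigma_\Upsilon(s) + d\rho(Q_s)\sigma_\Upsilon(s),
\end{equation*}
where $d\rho$ is the infinitesimal metaplectic representation; adding $\sigma^{\text{sub}}_A\sigma_\Upsilon$ recovers the full right-hand side of (\ref{secondorder}). I would then invoke Definition~\ref{GlobalSymbolDef}, the covariance of symbols under FIOs (Lemma~\ref{symbcalcbasis}), and the naturality of the metaplectic representation with respect to the symplectomorphism $df$ on normal bundles to transport the identity from $\Sigma_0$ to $\Sigma$, noting that the half-form factor is transported unchanged because we are differentiating at $r=0$ along a flow starting at the identity. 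The main obstacle will be the careful bookkeeping in the second step: ensuring that the Weyl-versus-standard quantization conventions used implicitly in (\ref{localTransport2}) match the conventions under which the metaplectic representation is defined, and verifying that passing from $\frakp$ to its conjugate under $F$ produces no spurious subprincipal contribution at a point where $\frakp$ already vanishes to first order along $\Sigma_0$.
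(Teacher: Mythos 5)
Your overall strategy matches the paper's: reduce to the model isotropic $\Sigma_0$, apply Theorem~\ref{localTransportII}, and reinterpret the three groups of terms in (\ref{localTransport2}) as pieces of the right-hand side of (\ref{secondorder}). Your identification of the second line of (\ref{localTransport2}) with the infinitesimal metaplectic action, and of the term $\frac{1}{\sqrt{-1}}\sum\frac{\partial p_0}{\partial\tau_j}\frac{\partial a_0}{\partial t_j}$ with the $r$-derivative of the Schwartz part of the transported symbol, are both correct.

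However, there is a genuine gap in your treatment of the half-form factor, and it propagates into your identification $p_1(s)\,a_0 = \sigma^{\text{sub}}_A\,\sigma_\Upsilon$. You assert that ``the half-form factor is transported unchanged because we are differentiating at $r=0$ along a flow starting at the identity.'' This is false: $\phi_0 = \mathrm{id}$ only guarantees that $\phi_0^*$ is the identity on half-forms, not that $\frac{d}{dr}\phi_r^*\big|_{r=0}$ vanishes. That derivative is the Lie derivative $\calL_\xi$ of the half-form $(\wedge^{1/2}dt)(\wedge^{1/2}du)$, which contributes the divergence-type term
\[
\frac{1}{2}\left(\sum_i\frac{\partial^2 H}{\partial t_i\,\partial\tau_i} + \sum_j\frac{\partial^2 H}{\partial u_j\,\partial\mu_j}\right)a_0(t,\cdot),
\]
so the Leibniz expansion of $\frac{d}{dr}U_s^{-r}(\sigma_\Upsilon(\phi_r(s)))|_{r=0}$ has \emph{three} terms, not two. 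Without this third term, the balance fails: the local computation in Theorem~\ref{localTransportII} is carried out in standard (left) quantization, and for a standard total symbol $p_0 + \h p_1 + \cdots$ one has $\sigma^{\text{sub}}_A = p_1 - \frac{1}{2\sqrt{-1}}\sum\partial^2_{x_j\xi_j}p_0$, not $\sigma^{\text{sub}}_A = p_1$. You flag the Weyl-vs-standard ambiguity as a possible obstacle at the end, but you do not resolve it, and the resolution is precisely the half-form Lie derivative you discarded. In other words, the term you drop is exactly the one that converts $p_1$ into $\sigma^{\text{sub}}_A$.

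The fix is to include the Lie derivative of the half-form in the chain-rule expansion; then the identity $\frac{1}{\sqrt{-1}}\cdot\frac{1}{2}\left(\sum\partial^2_{t_i\tau_i}H + \sum\partial^2_{u_j\mu_j}H\right) + \sigma^{\text{sub}}_A = p_1$ follows from the coordinate formula for the subprincipal symbol of a left-quantized $\Psi$DO, and everything closes as in the paper's own proof.
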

We can think of (\ref{secondorder}) as a Lie derivative on the infinite-rank bundle
over $\Sigma$ with fibers $\calH^{\calN^\Sigma_s}$.  The Lie derivative exists because of the  
existence of the natural lifts of $\phi_r$ to the bundle automorphisms $U^r$, as explained above.  
{
\begin{proof}
By the manifest covariance of (\ref{secondorder}) with respect to 
invertible FIOs, it suffices to prove it in the model case.
Consider again $\bbR^n=\bbR^k\oplus\bbR^l$, with coordinates
$x =(t,u)$, and the model isotropic
$\Sigma_0 = \{(t,0;0)\}\subset T^*\bbR^n$,  
and fix $s=(t_0,0;0)\in\Sigma_0$.
Consider also a simple model isotropic state, 
$\Upsilon(t,u,\h) = a(t,u\h^{-1/2})$. Let us identify all Hilbert
spaces $\calH^{\calN^\Sigma_s}$ with $L^2(\bbR^l)$.
Then, by the discussion of \S 3.1, we have:
\begin{equation}\label{steak}
U_s^{-r}(\sigma_\Upsilon(\phi_r(s))) = 
Mp(g_r)(a(T(r,s),\cdot))\;\phi_r^*
((\wedge^{1/2} dt)
(\wedge^{1/2} du))
\end{equation}
where $g_r$ is the linear symplectic transformation induced
by $d\phi_r$ in the symplectic normal, and 
$T(r,s)$ is the value of the $t$ coordinate at
$\phi_r(s)$.  By Leibniz' rule, upon differentiation of (\ref{steak})
we obtain 
the sum of three terms: 
\begin{equation}\label{firsterm}
I = mp(\chi)(a(t_0,\cdot))\,(\wedge^{1/2} dt)(\wedge^{1/2} du),
\end{equation}
where $\chi = \frac{d\ }{dr}dg_r|_{r=0}$ and $mp$ is the
infinitesimal metaplectic representation;
\begin{equation}\label{secondterm}
I\!I = \frac{d\ }{dr} a(T(r,s),\cdot)|_{r=0} \,(\wedge^{1/2} dt)(\wedge^{1/2} du);
\end{equation}
and
\begin{equation}\label{thirdterm}
I\!I\!I = a(t_0,\cdot)\,\frac 12\, \left(\sum 
\frac{\partial^2 H\ }{\partial t_i\partial\tau_i} +
\frac{\partial^2 H\ }{\partial u_j\partial\mu_j}
\right)
(\wedge^{1/2} dt)(\wedge^{1/2} du),
\end{equation}
where $H$ is the principal symbol of $A$.
(The calculation of the Lie derivative 
$\frac{d\ }{dr}\,\phi_r^*
((dt_1\wedge\cdots\wedge dt_k)^{1/2} 
(du_1\wedge\cdots\wedge du_l)^{1/2})|_{t=0}$, yielding
(\ref{thirdterm}), is exactly as in
in the proof of Proposition 1.3.1 in \cite{GSSemiclassical}.)
Let us now omit the half-form factors in $I$, $I\!I$ and $I\!I\!I$, and
show that, 
$\frac{1}{\sqrt{-1}}(I+I\!I+I\!I\!I)+\sigma^{\text{sub}}_A\,a(t_0,\cdot)$ 
equals (\ref{localTransport2}) (with the notational change $p_0=H$). 
The term $\frac{1}{\sqrt{-1}}I$ gives exactly the second line of 
(\ref{localTransport2}).  Further, given the assumption that the Hamilton
field of $H$ is tangent to $\Sigma_0$, 
\[
\frac{1}{\sqrt{-1}}\,I\!I = \frac 1{\sqrt{-1}} \sum  
\frac{\partial H}{\partial \tau_j}(s) \frac{\partial a}{\partial t_j}(t,u),
\]
so all that remains to be verified is that
\[
\frac{1}{\sqrt{-1}}\,\frac 12\, \left(\sum 
\frac{\partial^2 H\ }{\partial t_i\partial\tau_i} +
\frac{\partial^2 H\ }{\partial u_j\partial\mu_j}
\right)
+\sigma^{\text{sub}}_A\, = 
p_1(s).
\]
But this is equivalent to the expression for $\sigma^{\text{sub}}_A$ in
coordinates (see e.g. \S 1.3.4 in \cite{GSSemiclassical}).
\end{proof}
}

\subsubsection{Isotropic regularity}

We conclude this section with the observation 
that our spaces of isotropic states satisfy a certain  
{\em isotropic regularity} condition.

Let $\Sigma\subset T^*M$ an isotropic submanifold.  We let
\begin{equation}\label{}
\calP(\Sigma)
= \{f\in C^\infty(T^*M)\;;\; f|_\Sigma\equiv 0 \ \text{and}\ \Xi_f\ \text{is tangent to}\ \Sigma \}.
\end{equation}

\begin{lemma}
$\calP(\Sigma)$ is a Poisson subalgebra of $C^\infty(T^*M)$ and an ideal as well.
\end{lemma}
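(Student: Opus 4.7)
The plan is to verify, one by one, that $\calP(\Sigma)$ is (i) a linear subspace, (ii) an ideal in the commutative algebra $C^\infty(T^*M)$ (which in particular gives closure under pointwise multiplication), and (iii) a Lie subalgebra under the Poisson bracket. Step (i) is immediate, since both defining conditions (vanishing on $\Sigma$ and tangency of the Hamilton vector field to $\Sigma$) are linear in $f$.

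For step (ii), I would take $f\in\calP(\Sigma)$ and an arbitrary $h\in C^\infty(T^*M)$. Clearly $(fh)|_\Sigma\equiv 0$ since $f|_\Sigma\equiv 0$. The Leibniz rule gives
\[
\Xi_{fh} = f\,\Xi_h + h\,\Xi_f,
\]
and along $\Sigma$ the first term vanishes because $f|_\Sigma=0$, while the second is a smooth multiple of a vector field tangent to $\Sigma$, hence tangent to $\Sigma$. This simultaneously proves the ideal property and closure under products (the case $h\in\calP(\Sigma)$).

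For step (iii), I would take $f,g\in\calP(\Sigma)$ and argue first that $\{f,g\}=\Xi_f(g)$ vanishes on $\Sigma$: since $\Xi_f$ is tangent to $\Sigma$ and $g|_\Sigma\equiv 0$, the derivative of $g$ along $\Xi_f$ is zero at every point of $\Sigma$. For the tangency of $\Xi_{\{f,g\}}$, I would invoke the standard identity $\Xi_{\{f,g\}}=[\Xi_f,\Xi_g]$ together with the fact that the Lie bracket of two vector fields tangent to a submanifold is itself tangent to that submanifold. This completes the verification.

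No step is a genuine obstacle: each assertion reduces to a single standard identity (Leibniz for $\Xi_{fh}$, the Lie algebra homomorphism $f\mapsto\Xi_f$, and the tangency of Lie brackets to submanifolds), so the proof is essentially mechanical.
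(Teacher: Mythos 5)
Your proof is correct and follows essentially the same argument as the paper: the Poisson-bracket closure rests on $\{f,g\}=\Xi_f(g)$ and $\Xi_{\{f,g\}}=[\Xi_f,\Xi_g]$, and the ideal property on the Leibniz identity $\Xi_{fh}=f\,\Xi_h+h\,\Xi_f$ restricted to $\Sigma$ (the paper just writes $\Xi_{fg}=g\,\Xi_f$ on $\Sigma$, which is the same observation).
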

\begin{proof}
Let $f,g\in\calP(\Sigma)$.  We need to show that $\PB{f}{g}\in\calP(\Sigma)$.  Since
$\PB{f}{g} = \calL_{\Xi_f}g$, $\Xi_f$ is tangent to $\Sigma$ and $g$ is constant
on $\Sigma$, $\PB{f}{g}|_\Sigma =0$.  Also $\Xi_{\PB{f}{g}} = [\Xi_f, \Xi_g]$,
so this is tangent to $\Sigma$ if both $\Xi_f$, $\Xi_g$ are.

To prove the second part let $f\in\calP(\Sigma)$ and $g\in C^\infty(T^*M)$.  Clearly $fg$ vanishes
on $\Sigma$, and since, at any point on $\Sigma$
$\ \Xi_{fg} = g\Xi_f$, $\Xi_{fg}$ is tangent to $\Sigma$.
\end{proof}

The following is immediate from the symbol calculus:
\begin{corollary}
Given an isotropic $\Sigma\subset T^*M$,
the spaces $I^r(\Sigma)$ are stable under the action of 
arbitrary compositions
\[
P_1\circ\cdots \circ P_N
\]
where $P_1,\ldots ,P_N$ are first-order semiclassical pseudodifferential
operators whose symbols are all in
$\calP(\Sigma)$.
\end{corollary}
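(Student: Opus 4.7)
The plan is to proceed by induction on $N$, using Theorem \ref{symbolcalc2} as the essential tool. Each $P_i$ is a first-order semiclassical pseudodifferential operator whose principal symbol $p_i$ lies in $\calP(\Sigma)$, which by definition means $p_i|_\Sigma \equiv 0$ and $\Xi_{p_i}$ is tangent to $\Sigma$. These are precisely the hypotheses required by Theorem \ref{symbolcalc2} (applied with $m=1$), so a single application of $P_i$ raises the isotropic order by $2$: for any $\Upsilon \in I^r(\Sigma)$ one has $P_i(\Upsilon) \in I^{r+2}(\Sigma)$.

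The base case $N=1$ is then immediate. For the inductive step, assume that $Q:=P_2\circ\cdots\circ P_N$ maps $I^r(\Sigma)$ into $I^{r+2(N-1)}(\Sigma)$; then by Theorem \ref{symbolcalc2} applied to $P_1$ and the element $Q(\Upsilon)\in I^{r+2(N-1)}(\Sigma)$, we obtain $(P_1\circ\cdots\circ P_N)(\Upsilon) = P_1(Q(\Upsilon)) \in I^{r+2N}(\Sigma)$. Since $I^{r'}(\Sigma)\subset I^{r}(\Sigma)$ whenever $r'\geq r$ (immediate from the asymptotic expansion \eqref{2exp}), this proves in particular that the composition preserves $I^r(\Sigma)$ for every $r$, which is the claimed stability.

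There is no substantive obstacle here: the statement is formulated exactly as a direct corollary of the symbol calculus developed in Theorems \ref{symbolcalc0}--\ref{symbolcalc2}. The role of the preceding Lemma, which asserts that $\calP(\Sigma)$ is a Poisson subalgebra and an ideal, is conceptual rather than technical: it guarantees that the class of operators to which the corollary applies is itself closed under commutators, hence that the algebra generated by such operators is a natural object. This structural fact is not, however, invoked in the argument above; only the pointwise hypothesis on each individual $p_i$ is used, and that hypothesis depends solely on $p_i$ and $\Sigma$, so it remains valid at each step of the induction regardless of the intermediate isotropic order produced by $Q$.
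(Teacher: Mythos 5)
Your induction is the right framework, and your observation that the Poisson-algebra Lemma is conceptual rather than logically required here is correct; the paper clearly intends this Corollary to be read off directly from Theorems \ref{symbolcalc0}--\ref{symbolcalc2}. However, your bookkeeping with ``$m=1$'' quietly trivializes the statement and, more importantly, makes the hypothesis $p_i\in\calP(\Sigma)$ superfluous --- which should have been a warning sign. If each $P_i$ had semiclassical order $m=1$ in the sense of Theorem \ref{symbolcalc0} (symbol $\sim\h^1 p_i$), then already \emph{without any condition on the symbols} one would have $P_i\colon I^r\to I^{r+1}\subset I^r$, and the composition would land in $I^{r+N}\subset I^r$ automatically. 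The Corollary would then say nothing that Theorem \ref{symbolcalc0} alone does not, and the remark that follows (``elements don't get worse \ldots\ provided their symbols are in $\calP(\Sigma)$'') would make no sense.

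The intended reading --- consistent with that remark, and with the reference to H\"ormander's characterization of Lagrangian distributions by ``iterated regularity'' under classical first-order operators --- is that the $P_i$ are first-order in the \emph{classical} (differential-operator) sense, i.e.\ of the form $\h^{-1}Q_i$ with $Q_i$ a zeroth-order semiclassical operator, so $m=-1$ in the notation of Theorem \ref{symbolcalc0}. Then, without conditions, $P_i\colon I^r\to I^{r-1}$ (strictly worse); imposing only $p_i|_\Sigma\equiv 0$ improves this to $I^{r-1/2}$ by Theorem \ref{symbolcalc1} (still worse); and it is precisely the second half of the hypothesis $p_i\in\calP(\Sigma)$ --- that the Hamilton field $\Xi_{p_i}$ is tangent to $\Sigma$ --- which, via Theorem \ref{symbolcalc2}, recovers the last $\h^{1/2}$ and gives $P_i\colon I^r\to I^r$ \emph{exactly}, with no leftover $\h$-gain. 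Your inductive scheme then goes through verbatim with the arithmetic corrected: each step preserves $I^r$ on the nose, so the composition does too. The key point you should add is that both parts of the definition of $\calP(\Sigma)$ are used, each contributing $+1/2$ to offset the $-1$ from the order of $P_i$.
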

Thus elements in $I^r(\Sigma)$ ``don't get worse" upon application
of any number of first-order operators, provided their symbols are in
$\calP(\Sigma)$.
We conjecture that indeed the spaces $I^r(\Sigma)$ can be characterized
by such an isotropic regularity condition, in a
similar way to H\"ormander's characterization of lagrangian distributions
in \cite{Ho} (but in the semiclassical setting).

\subsection{Norm estimates}
This short section is devoted to the proof of the following
\begin{theorem}\label{NormEst}
Let $\Sigma\subset T^*X$ an isotropic of dimension $n-l$, $n = \dim (X)$,
and $\Upsilon\in I^r(\Sigma)$ with compact support.
Then
\begin{equation}\label{normEst}
\norm{\Upsilon}^2 = \h^{2r+l/2} \int_\Sigma |\sigma_\Upsilon|^2 + O(\h^{2r+l/2+1/2})
\end{equation}
where $|\sigma_\Upsilon|^2$ is the top-degree form on $\Sigma$
obtained by integrating, at each point $s\in \Sigma$,
the norm squared of the Schwartz function $\sigma_\Upsilon(s)$
(times the square of the half-form factor along $\Sigma$).
\begin{proof}
It suffices to verify the formula in the model case, that is, when
\[
\Upsilon =  a(t,\hmtwo u,\h): \bbR^n\times (0,\h_0)\to\bbC,
\]
where 
$
a(t,u,\h) \sim \h^{r}\sum_{j=0}^\infty a_j(t,u)\, \h^{j/2}
$
and we further assume that $\Upsilon$ is compactly supported
in the $t$ variables.  The proof is then elementary, reducing to the 
calculation of the leading term
\[
\h^{2r}\iint |a_0(t,\hmtwo u)|^2\, du\, dt = \h^{2r+l/2}\iint |a_0(t,v)|^2\, dv\, dt.
\]
\end{proof}
\end{theorem}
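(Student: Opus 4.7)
My plan is to reduce the estimate to the model case by a microlocal partition of unity, and then verify it there by a direct change of variables. Specifically, using Definition \ref{generaldef}, I would pick a partition of unity $\{\chi_\ell\}$ of a neighborhood of $\Sigma$ together with zeroth-order semiclassical FIOs $F_\ell$ such that $\chi_\ell\Upsilon = F_\ell(\Upsilon_\ell) + O(\hbar^\infty)$, where $\Upsilon_\ell\in I^r(\Sigma_0)$ is supported in $\calU_\ell\subset\bbR^n$. Since $\Upsilon$ has wave-front set in $\Sigma$, the complement of $\bigcup\chi_\ell$ contributes $O(\hbar^\infty)$ to $\|\Upsilon\|^2$, and by refining the cover I may arrange that cross-terms $\langle\chi_\ell\Upsilon,\chi_{\ell'}\Upsilon\rangle$ involving non-overlapping patches are likewise $O(\hbar^\infty)$.

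For each remaining piece I use the fact that a zeroth-order FIO associated to a canonical transformation between open subsets of cotangent bundles is unitary modulo an operator of one lower order, so $\|F_\ell(\Upsilon_\ell)\|^2 = \|\Upsilon_\ell\|^2$ up to a relative error of $O(\hbar)$. On the symbol side, the global symbol $\sigma_\Upsilon(f_\ell(s_0))$ is the image of $\tilde\sigma_{\Upsilon_\ell}(s_0)$ under the metaplectic lift $U^{\varphi_\ell}$ tensored with the half-form transport of Lemma \ref{symbcalcbasis}. Because metaplectic operators are unitary on $\calS(\bbR^l)$ and the half-form factor accounts precisely for the change of density between $T_{s_0}\Sigma_0$ and $T_{f_\ell(s_0)}\Sigma$, the pointwise quantity $|\sigma_\Upsilon(s)|^2$ is invariantly defined as a top-degree form on $\Sigma$ and matches $|\tilde\sigma_{\Upsilon_\ell}|^2$ under pull-back. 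Thus both sides of (\ref{normEst}) are preserved by the reduction to the model situation.

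In the model case $\Upsilon(t,u,\hbar)=a(t,\hmtwo u,\hbar)$, the substitution $v = \hmtwo u$ produces $du = \hbar^{l/2}\,dv$, hence
\[
\|\Upsilon\|^2 = \hbar^{l/2}\iint |a(t,v,\hbar)|^2\, dt\, dv.
\]
Expanding $a \sim \hbar^r a_0 + \hbar^{r+1/2} a_1 + \cdots$ and using the Schwartz decay of each $a_j$ uniformly in $t$ on compact sets, the leading term is $\hbar^{2r+l/2}\iint |a_0(t,v)|^2\, dt\, dv$, while the first correction comes from the cross-terms $2\,\mathrm{Re}(a_0\bar a_1)$ and contributes $O(\hbar^{2r+l/2+1/2})$, as required. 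I anticipate the only conceptually delicate step to be the very first one: verifying that the partition-of-unity reduction does not produce boundary or interference terms that spoil the leading asymptotic. This is controlled by microlocal disjointness of the pieces and is standard once the symbol $\sigma_\Upsilon$ has been shown to behave covariantly under the unitary FIOs, which is exactly the content of \S 3.2.
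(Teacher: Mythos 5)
Your proof takes essentially the same approach as the paper: reduce to the model class via the FIOs that define $I^r(\Sigma)$, then compute by the substitution $v = \h^{-1/2}u$. The paper's own proof asserts the reduction in one line ("it suffices to verify the formula in the model case") and then performs exactly the model-case computation you give; your proposal simply fills in the reduction step (microlocal partition of unity, unitarity of zeroth-order FIOs modulo $O(\h)$, covariance of the symbol and its half-form factor), which the paper leaves implicit.

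One small point to be careful about when writing out the reduction: the cross-terms $\langle\chi_\ell\Upsilon,\chi_{\ell'}\Upsilon\rangle$ for \emph{overlapping} microlocal cutoffs are \emph{not} negligible, and disjointness handles only the non-overlapping ones. The standard fix is to choose the $\chi_\ell$ so that $\sum_\ell \chi_\ell^*\chi_\ell = I$ microlocally near $\Sigma$; then $\|\Upsilon\|^2 = \sum_\ell \|\chi_\ell\Upsilon\|^2 + O(\h^\infty)$ with no cross-terms, and each summand localizes to a single model chart. With that adjustment your argument is complete.
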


\subsection{An alternate approach}

We will conclude this description of isotropic states by showing that there 
is an alternate description of these states involving the Hermite 
distributions  of Boutet de Monvel, \cite{Bout}.
This alternate description follows \cite{PU2}.

\subsubsection{Local theory}

Let $U\subset\bbR^n$ be an open set.  Consider $T^*(U\times\bbR)$ with coordinates
$(x,\theta\;;\; \xi,\kappa)$, and let
\[
T^*(U\times\bbR)_+ = \{(x,\theta\;;\; \xi,\kappa)\;;\; \kappa>0\}.
\]
We begin by noticing that if $\frakh \in C_0^{-\infty}(U\times\bbR)$ is a distribution whose 
wave-front set is contained in $T^*(U\times\bbR)_+$, then 
the partial Fourier transform
\begin{equation}\label{deBorelization}
\widehat\frakh(x,\h) := \int e^{-i\hinv\theta} \frakh(x,\theta)\, d\theta
\end{equation}
is a smooth function of $x$ for each $\h$, 
since the wave-front set of $\frakh$ does not contain covectors conormal to 
the fibers of the projection $U\times\bbR\to U$. 

\begin{definition}
Let $r$ be a half integer and $\wt\Sigma\subset T^*(U\times\bbR)_+$
a conic isotropic submanifold.  
We then define $\tilde I^r(\wt\Sigma)$ to be the class of $\h$-dependent
functions on $U$
given by the partial Fourier transform (\ref{deBorelization}), as $\frakh$ ranges over
the space $I^{-r+\frac n2}(U\times\bbR, \wt\Sigma)$ of 
Hermite distributions in the sense of Boutet de Monvel, \cite{BG}.
\end{definition}

The alternative approach of this section is embodied by the following
\begin{theorem}\label{AlternativeApproach}
Let $\Sigma\subset T^*U$ a connected isotropic submanifold (not necessarily conic),
and assume that there is  a conic isotropic submanifold,
$\wt\Sigma\subset T^*(U\times\bbR)_+$, such that
\[
\Sigma = \{(x,p)\in T^*U\;;\; \exists\theta\in\bbR\ (x,\theta\;;\; p, \kappa=1)\in \wt\Sigma\}.
\]
Then there exists $\theta_0\in\bbR$ such that
\[
\tilde I^r(\wt\Sigma)= e^{i\hinv\theta_0}I^r(\Sigma).
\]
\end{theorem}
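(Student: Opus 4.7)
The plan is to identify a canonical constant $\theta_0$ intrinsically from $\wt\Sigma$, reduce to a model case, and then carry out an explicit computation of the partial Fourier transform (\ref{deBorelization}). For the constant: since $\wt\Sigma \subset T^*(U\times\bbR)_+$ is conic and isotropic, the Liouville form $\xi\cdot dx + \kappa\,d\theta$ vanishes on $\wt\Sigma$. Restricting to the slice $\wt\Sigma \cap \{\kappa=1\}$ and using that $\Sigma \subset T^*U$ is itself isotropic (so $\xi\cdot dx$ vanishes on $\Sigma$), one obtains $d\theta = 0$ on the slice. Connectedness of $\Sigma$ (hence of the slice) then forces $\theta \equiv \theta_0$ for a unique constant $\theta_0$, which is the one appearing in the statement.

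Next, I would reduce to a model case. The class $I^r(\Sigma)$ is functorial under semiclassical FIOs by Theorem \ref{MLT2}, and the Hermite space $I^{-r+n/2}(U\times\bbR, \wt\Sigma)$ is functorial under homogeneous FIOs in the sense of Boutet de Monvel. Conjugating by operators on each side brings $\wt\Sigma$ to the model
\begin{equation*}
\wt\Sigma_0 = \{(t, u=0, \theta=\theta_0;\ \tau=0, \mu=0, \kappa) : \kappa > 0\},
\end{equation*}
whose slice at $\kappa=1$ projects to the model isotropic $\Sigma_0$ of \S 2. The key compatibility is that a homogeneous FIO whose phase is linear in $\kappa$ corresponds, under the substitution $\kappa = \hbar^{-1}$, to a semiclassical FIO on $U$, and these intertwine the partial Fourier transform (\ref{deBorelization}).

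In the model case, a Hermite distribution $\frakh \in I^{-r+n/2}(U\times\bbR, \wt\Sigma_0)$ admits a local oscillatory-integral representation with phase $(\theta-\theta_0)\kappa + u\cdot\mu$ and a Hermite-type symbol that is Schwartz in $\mu$. Substituting $\kappa = \hbar^{-1}$ and rescaling $\mu = \hbar^{-1/2}\nu$ in (\ref{deBorelization}), the integral collapses, after accounting for Jacobians, to
\begin{equation*}
\widehat\frakh(t, u, \hbar) = e^{i\hinv\theta_0}\, \hbar^r\, a\!\left(t,\, \hmtwo u,\, \hbar\right),
\end{equation*}
where $a(t, v, \hbar)$ has an asymptotic expansion $\sim \sum_j a_j(t, v)\,\hbar^{j/2}$ with each $a_j$ Schwartz in $v$. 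This gives $\widehat\frakh \in e^{i\hinv\theta_0} I^r(\Sigma_0)$, i.e.\ the inclusion $\tilde I^r(\wt\Sigma) \subset e^{i\hinv\theta_0} I^r(\Sigma)$; the reverse inclusion is obtained by Borel-summing the semiclassical expansion of a given $\Upsilon \in I^r(\Sigma_0)$ into a classical $\kappa$-symbol, producing a Hermite distribution whose partial Fourier transform recovers $e^{-i\hinv\theta_0}\Upsilon$.

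The main obstacle is the bookkeeping of symbol orders and normalization factors in the last step. The shift $m \mapsto -m + n/2$ in the definition of $\tilde I^r(\wt\Sigma)$ must balance precisely the powers of $\hbar$ arising from (i) the Jacobian of $\kappa = \hbar^{-1}$, (ii) the rescaling $\mu = \hbar^{-1/2}\nu$, and (iii) the Boutet-Guillemin convention for symbol orders of Hermite distributions. Verifying that the Schwartz decay of the Hermite symbol in the $\mu$-direction transfers correctly to the Schwartz profile of the semiclassical leading symbol $a_0$ is the technical crux; this duality between conormal Schwartz decay and Hermite-type oscillatory integrals is precisely what underlies the correspondence of \cite{PU2}.
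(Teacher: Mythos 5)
Your proposal is essentially correct and follows the same overall strategy as the paper: identify $\theta_0$ from the Liouville form, reduce to the model isotropic by FIO conjugation (which requires the equivariance of the Hermite classes under semiclassical FIOs, established via the correspondence of \cite{PU2}), and then verify the identity in the model case by an explicit computation. The one place where you diverge in detail is inside the model-case step: the paper proves $I^r(\Sigma_0)\subset\tilde I^r(\wt\Sigma_0)$ by writing down the Hermite distribution $\frakh_j(t,u,\theta)=\int_0^\infty e^{i\kappa\theta}a_j(t,\kappa^{1/2}u)\kappa^{-j/2}\,d\kappa$ directly, and gets the reverse inclusion by a symbolic induction (match symbols, subtract, iterate). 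You go the other way round: you collapse the oscillatory-integral representation of a Hermite distribution by noticing that the $\theta$-integration in (\ref{deBorelization}) produces a delta at $\kappa=\h^{-1}$, and handle $I\subset\tilde I$ by Borel summation into a $\kappa$-symbol. Both are valid; the paper's symbolic argument for $\tilde I\subset I$ avoids having to manipulate a general Hermite amplitude explicitly, which is cleaner, while your direct collapse makes the mechanism very transparent.

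One small imprecision worth fixing: you describe the Hermite amplitude as ``Schwartz in $\mu$,'' but the defining feature of Hermite amplitudes is that they are Schwartz in the \emph{rescaled} transverse variable $\mu/\sqrt{\kappa}$ together with a classical expansion in $\kappa$. That is exactly what makes your combined substitution $\kappa=\h^{-1}$, $\mu=\h^{-1/2}\nu$ work, since then $\mu/\sqrt\kappa=\nu$. As stated, ``Schwartz in $\mu$'' is not a homogeneous condition and would not by itself reproduce the class $I^r(\Sigma_0)$ after the Fourier transform. Your subsequent computation uses the correct scaling, so the argument goes through, but the description of the amplitude should be stated with the $\sqrt\kappa$ normalization.
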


\begin{remark}
The hypothesis of the theorem can be seen to be equivalent to the existence
of a function $\psi:\Sigma\to\bbR$ such that 
\begin{equation}\label{}
d\psi = \iota^*(pdx),
\end{equation}
where $\iota:\Sigma\hookrightarrow T^*U$ is the inclusion and $pdx$ the
tautological one-form.  It therefore is always satisfied (micro) locally.
To such a function one associates the isotropic $\wt\Sigma\subset T^*(U\times\bbR)$ given by
\[
\wt\Sigma = \{(x,\theta\;;\;\xi,\kappa)\;;\; \theta = \psi(x,p),\ \xi = \kappa p,\, (x,p)\in\Sigma\}.
\]
The overall phase factor $e^{i\hinv\theta_0}$ reflects the fact that the
function $\psi$ is only defined up to a constant, if $\Sigma$ is connected.
\end{remark}

\begin{remark}
To make the statement in the theorem clear, the conclusion is that there is a $\theta_0$ such 
that for any Hermite distribution 
$\frakh\in I^{-r+\frac n2}(U\times\bbR, \wt\Sigma)$
there is a corresponding $\Upsilon\in I^r(\Sigma)$ such that 
$\widehat\frakh = e^{i\hinv\theta_0}\Upsilon$.
Moreover, there is a simple
correspondence between the symbols of $\frakh$ and $\Upsilon$,
which roughly speaking says that under the identification of $\Sigma$ 
with the subset $\wt\Sigma\cap\{\kappa=1\}$, the symbol of 
$\Upsilon$ is the restriction of the symbol of $\frakh$ to the set 
$\kappa = 1$
(the symplectic normal spaces of $\Sigma$ and of $\wt\Sigma$
are naturally isomorphic at corresponding points).

\end{remark}

\medskip
The proof of Theorem \ref{AlternativeApproach} will take the remainder of this section.  

\begin{proposition}
If $\Sigma_0$ is the model isotropic, then for a suitable choice of $\wt\Sigma_0$ one has
$\tilde I^r(\wt\Sigma_0)= I^r(\Sigma_0)$.
\end{proposition}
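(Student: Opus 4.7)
My plan is to identify $\wt\Sigma_0$ explicitly, recall the model oscillatory-integral form of the Hermite distributions associated to it, and then compute the partial $\h$-Fourier transform (\ref{deBorelization}) to match with the definition of $I^r(\Sigma_0)$.

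Since $\Sigma_0 \subset T^*\bbR^n$ lies in the zero section, the tautological one-form $p\,dx$ restricts to zero on $\Sigma_0$, so (following the preceding remark) we may take the primitive $\psi \equiv 0$ and set
\[
\wt\Sigma_0 \;=\; \{(t, 0, 0\,;\, 0, 0, \kappa)\,:\, t\in\bbR^k,\ \kappa>0\}\;\subset\; T^*(\bbR^n\times\bbR)_+.
\]
This is a conic isotropic submanifold whose slice $\kappa=1$ recovers $\Sigma_0$, and whose symplectic normal bundle is naturally identified, at each point, with that of $\Sigma_0$ (both being spanned by $\partial_{u_j}, \partial_{\mu_j}$).

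Next I appeal to \cite{BG} to write any $\frakh \in I^{-r+n/2}(\bbR^n\times\bbR, \wt\Sigma_0)$, modulo Schwartz errors, in the model form
\[
\frakh(t, u, \theta) \;=\; \int_0^\infty e^{i\kappa\theta}\, \kappa^{-r}\, a(t, \sqrt{\kappa}\,u, \kappa)\, d\kappa,
\]
where $a(t, v, \kappa)$ is a classical symbol of order $0$ in $\kappa$ with expansion $a(t, v, \kappa) \sim \sum_{j\geq 0} \kappa^{-j/2} a_j(t, v)$, each $a_j$ Schwartz in $v$ uniformly on compact subsets of $t$. (The overall power $\kappa^{-r}$, combined with the half-density Jacobians built into BG's convention, accounts for the stated order $-r + n/2$.) The main computation is then immediate: interchanging the $\theta$- and $\kappa$-integrations in (\ref{deBorelization}) --- legitimate because the wavefront condition $\kappa>0$ makes $\frakh$ tempered in $\theta$ up to a Schwartz remainder --- the $\theta$-integration produces $2\pi\,\delta(\kappa - \hinv)$, whence
\[
\widehat\frakh(t, u, \h) \;=\; 2\pi\, \h^{r}\, a(t, \hmtwo u, \hinv) \;\sim\; 2\pi\sum_{j\geq 0} \h^{r + j/2}\, a_j(t, \hmtwo u),
\]
which is precisely the form defining $I^r(\Sigma_0)$. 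Conversely, given $\Upsilon \in I^r(\Sigma_0)$ with symbol data $\{a_j\}$, one Borel-sums the $a_j$ into a classical symbol $a(t, v, \kappa)$ of order $0$ in $\kappa$ and Schwartz in $v$, and defines $\frakh$ by the same integral formula; the resulting $\frakh$ lies in $I^{-r+n/2}(\bbR^n\times\bbR, \wt\Sigma_0)$ and satisfies $\widehat\frakh = 2\pi\,\Upsilon$ modulo $O(\h^\infty)$.

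The main obstacle is the bookkeeping that reconciles Boutet's order $-r + n/2$ with the power $\kappa^{-r}$ appearing in the integrand; this reduces to tracking the $\kappa^{1/4}$-type factors coming from the $2l$ transverse symplectic directions in BG's half-density convention, together with the standard dimensional shift for a distribution on a manifold of dimension $n+1$. A subsidiary technical point is justifying the Fubini step, handled by separating the integrand near $\kappa = 0$ as a Schwartz function of $\theta$, whose semiclassical Fourier transform is $O(\h^\infty)$.
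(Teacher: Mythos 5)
Your choice of $\wt\Sigma_0$ agrees with the paper's, and the computation at the heart of your argument (partial Fourier transform in $\theta$ of $\int_0^\infty e^{i\kappa\theta}\kappa^{-r}a(t,\sqrt{\kappa}\,u,\kappa)\,d\kappa$, giving $2\pi\,\h^r a(t,\h^{-1/2}u,\h^{-1})$) is correct and is, in the easy direction, essentially the paper's own calculation. However, you structure the proof quite differently. You try to establish \emph{both} inclusions at once by first invoking a ``model form'' lemma: that every $\frakh\in I^{-r+n/2}(\bbR^n\times\bbR,\wt\Sigma_0)$ can be written, up to a negligible error, as such an oscillatory integral with amplitude $\kappa^{-r}a(t,\sqrt{\kappa}\,u,\kappa)$. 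That normal form is not literally a citable statement in \cite{BG}; it requires an amplitude-reduction argument (killing the $\theta$- and $u$-dependence of the Hermite amplitude order by order, inverting the $\eta$-integral, and Borel-summing) which is precisely the part you gloss over. The paper sidesteps exactly this by splitting: for $I^r(\Sigma_0)\subset\tilde I^r(\wt\Sigma_0)$ it constructs the Hermite distribution $\frakh_j$ explicitly for each term $a_j\h^{j/2}$ of the expansion (your ``converse'' direction), and for $\tilde I^r\subset I^r$ it runs a soft inductive symbol-matching argument --- subtract off an isotropic state with matching symbol, drop half an order, and iterate --- which never needs a global normal form for $\frakh$. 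So you buy a single, more explicit computation, at the price of a normal-form lemma that the paper deliberately avoids.

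Two smaller points. First, your justification of the Fubini/$\delta$-function step (``$\frakh$ tempered in $\theta$ up to a Schwartz remainder'') isn't quite the right statement: a genuine $\frakh\in C_0^{-\infty}(U\times\bbR)$ is compactly supported in $\theta$, whereas your model integral is not, so the two differ by something that is smooth and compactly supported in $\theta$ (because $\mathrm{WF}(\frakh)\subset\{\theta=0\}$), and it is \emph{that} that has $O(\h^\infty)$ partial Fourier transform. Second, the order bookkeeping ($\kappa^{-r}$ versus the Boutet order $-r+\tfrac n2$) is asserted but not checked; in the paper the factor $\kappa^{-l/2}$ that appears after the substitution $\eta=\sqrt\kappa\,\zeta$ is exactly what carries the dimensional shift, and it would strengthen your write-up to make that explicit rather than appealing to ``half-density Jacobians'' generically.
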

\begin{proof}
Recall that the model isotropic $\Sigma_0\subset T^*\bbR^{k+l}$ is
\[
\Sigma_0 = \{(t, u=0; \xi=0)\}
\]
where the coordinates on $T^*\bbR^{k+l}$ are $(t, u\;;\; \xi = (\tau, \mu))$.
A canonical ``lift" to $T^*(\bbR^{k+l}\times\bbR)$ is 
\[
\wt\Sigma_0 = \{(t, u=0, \theta = 0; \xi=0, \kappa>0)\}.
\]
To simplify notation, we'll take without loss of generality $r=0$.

\smallskip
To show that $I^0(\Sigma_0)\subset \tilde{I}^0(\wt\Sigma_0)$,
let $\Upsilon(t,u,\h) = a(t, \h^{-1/2}u, \h)\in I^0(\Sigma_0)$. 
Recall that $a(t,u,\h)\sim \sum_{j=0}^\infty a_j(t,u)\,  \h^{j/2}$, where the $a_j$
are Schwartz in the variables $u$.  It will be enough to show that 
$\Upsilon_j\in \tilde{I}^j(\wt\Sigma_0)$, where
\[
\Upsilon_j(t,u,\h) = a(t, \h^{-1/2}u)\h^{j/2}.
\]
Let
\[
\frakh_j(t,u,\theta) = \int_0^\infty e^{i\kappa\theta} a_j(t, \kappa^{1/2}u)\, \kappa^{-j/2}\, d\kappa.
\]
Let $\hat{a}_j$ be the Fourier transform of $a_j$ in the $u$ variables, so that
\[
a_j(t,u) = \int e^{i\zeta\cdot u}\, \hat{a}_j(t,\zeta)\, d\zeta.
\]
Then
\[
\frakh_j(t,u,\theta) = \int_{\kappa >0} e^{i(\kappa\theta + \kappa^{1/2}\zeta\cdot u)}\,
\hat{a}_j(t,\zeta)\,\kappa^{-j/2}\, d\kappa\,d\zeta.
\]
If we let $\eta = \sqrt{\kappa}\,\zeta$, by substitution we get
\[
\frakh_j(t,u,\theta) = \int_{\kappa >0} e^{i(\kappa\theta + \eta\cdot u)}\,\hat{a}_j(t,\eta/\sqrt{\kappa})\,
\kappa^{-l/2}\, d\eta\,d\kappa.
\]
But this expression shows exactly that $\frakh_j$ is an Hermite distribution in the stated class.

\bigskip

Now we prove that $\tilde{I}^0(\wt\Sigma_0)\subset I^0(\Sigma_0)$.  
Let $\frakh$ be a Hermite distribution associated with 
$\wt\Sigma_0$.
By the discussion on symbols, $\frakh$ induces a symbol in the same symbol space as
those of elements in $I^0(\Sigma_0)$.  Let $\Upsilon_0\in I^0(\Sigma_0)$ be any
Hermite state with the same symbol as the one induced by $\frakh$.  Then, by the
previous part of the proof, $\Upsilon_0\in \tilde{I}^0(\wt\Sigma_0)$
and $\widehat\frakh-\Upsilon_0\in\tilde{I}^{1/2}(\wt\Sigma_0)$.  
Repeat the argument inductively, to obtain $\Upsilon_\infty\in \tilde{I}^0(\wt\Sigma_0)$ such that 
$\widehat\frakh-\Upsilon_\infty\in\tilde{I}^{\infty}(\wt\Sigma_0)$.
\end{proof}

\begin{proposition}
The classes $\tilde I^r(\wt\Sigma)$ are equivariant under the action of semiclassical
FIOs on $C^\infty(U)$.
\end{proposition}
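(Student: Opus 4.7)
The plan is to lift each semiclassical Fourier integral operator $F$ on $C^\infty(U)$ to a homogeneous FIO $\wt F$ on $C^\infty(U\times\bbR)$ which intertwines with the partial Fourier transform (\ref{deBorelization}), and then to invoke Boutet de Monvel's equivariance theorem for Hermite distributions under homogeneous FIOs.

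First, given a semiclassical FIO $F$ on $C^\infty(U)$ quantizing a canonical transformation $f:T^*U\to T^*U$, I would construct a homogeneous FIO $\wt F$ on $U\times\bbR$ whose underlying homogeneous canonical transformation $\wt f:T^*(U\times\bbR)_+\to T^*(U\times\bbR)_+$ is the natural homogeneous lift of $f$,
\[
\wt f(x,\theta;\xi,\kappa)=\bigl(y,\,\theta+\Delta_f(x,\xi/\kappa);\,\kappa\eta,\,\kappa\bigr),
\]
where $(y,\eta)=f(x,\xi/\kappa)$ and the phase correction $\Delta_f$ is read off from a generating function for $f$. The map $\wt f$ is homogeneous of degree one in $\kappa$, preserves $T^*(U\times\bbR)_+$, and sends conic isotropic submanifolds to conic isotropic submanifolds.

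Second, I would verify the intertwining relation $F(\widehat\frakh)=\widehat{\wt F\,\frakh}$, modulo lower-order terms, for distributions $\frakh$ with wave-front set contained in $T^*(U\times\bbR)_+$. Writing $F$ as an oscillatory integral with semiclassical phase $\phi(x,y,\eta)/\h$ and substituting $\h=1/\kappa$, the phase becomes $\kappa\,\phi(x,y,\eta/\kappa)$, matching the homogeneous phase of $\wt F$. The $\theta$-Fourier transform converts $\kappa$-homogeneity of degree one into $\h^{-1}$-scaling, and the phase correction $\Delta_f$ is chosen so that the $\theta$-phase contributions on both sides align. Third, by Boutet de Monvel's theorem (see \S 5 of \cite{BG}), $\wt F$ maps $I^{-r+n/2}(U\times\bbR,\wt\Sigma)$ into $I^{-r+n/2}(U\times\bbR,\wt f(\wt\Sigma))$. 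Because $\wt f$ is $\kappa$-homogeneous and preserves $\{\kappa>0\}$, the image $\wt f(\wt\Sigma)$ is again a conic isotropic submanifold of $T^*(U\times\bbR)_+$, and combined with the intertwining this yields $F(\widehat\frakh)\in\tilde I^r(\wt f(\wt\Sigma))$, the desired equivariance.

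The hardest step will be the explicit construction of $\wt F$ and the careful verification of the intertwining, in particular identifying the correct phase correction $\Delta_f$ and controlling the various amplitudes under the substitution $\h=1/\kappa$ so that homogeneous and semiclassical oscillatory integrals match on the nose. An alternative that bypasses much of this bookkeeping would be to decompose $F$ into the elementary factors appearing in Theorem \ref{MLT1} — pullbacks by diffeomorphisms, multiplication by $e^{i\phi/\h}$ factors, partial Fourier transforms, and quantizations of symplectomorphisms fixing the zero section — and to check equivariance case by case, using the preceding proposition (which treats the model isotropic $\wt\Sigma_0$) as the base case and the Egorov-style symbol correspondence to transplant back and forth.
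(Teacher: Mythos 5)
Your approach is essentially the same as the paper's, though you spell out details the paper leaves to a citation: the paper simply invokes \cite{PU2} for the correspondence between semiclassical FIOs on $C^\infty(U)$ and homogeneous FIOs on $C^\infty(U\times\bbR)$ commuting with the $\bbR$-action (this is exactly your explicit homogeneous lift $\wt f$, with phase correction $\Delta_f$ satisfying $d\Delta_f = f^*(p\,dx)-p\,dx$, and the accompanying intertwining $F\widehat\frakh=\widehat{\wt F\frakh}$), and then cites \cite{BG} \S 3 (not \S 5, but that's cosmetic) for the invariance of Hermite distributions under homogeneous FIOs. The paper's one-line proof relies on these two references doing the work; your sketch fills in the construction behind the first reference and is correct, modulo the minor bookkeeping you yourself flag (one would substitute both $\h=1/\kappa$ and rescale the frequency variable to turn $\phi(x,y,\eta)/\h$ into a jointly degree-one homogeneous phase).
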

\begin{proof}
By \cite{PU2}, semiclassical FIOs  on $C^\infty(U)$
correspond to H\"ormander's FIOS on $C^\infty(U\times\bbR)$ that commute
with the $\bbR$ action on $C^\infty(U\times\bbR)$, and the classes of 
Hermite distributions are invariant under FIOs, see \cite{BG} \S 3.
\end{proof}

\subsubsection{Global theory}

\medskip
Let $M$ be a manifold, and $\Sigma\subset T^*M$ a isotropic
submanifold.
Let $\iota:\Sigma\hookrightarrow T^*M$ be the inclusion, and denote by
$pdx$ the tautological one-form of $T^*M$.  The isotropic condition
on $\Sigma$ is that $\iota^*pdx$ is closed.  It is not generally true that 
$\iota^*pdx$ is exact, as assumed in the previous section, but 
in some cases it is true that $\Sigma$ satisfies 
the {\em Bohr-Sommerfeld condition}, namely, 
 that
$\exists\ f:\Sigma\to S^1$ smooth such that
\begin{equation}\label{}
 \iota^*pdx = \sqrt{-1}\, d\log(f).
\end{equation}
We can then lift  $\Sigma$ to a homogeneous submanifold
of the cotangent bundle of $M\times S^1$:
\begin{equation}\label{}
\widetilde\Sigma := \left\{ (x, f(x,p)
\;;\; \kappa p,\kappa)\in T^*(M\times S^1)
\;;\; (x,p)\in \Sigma,\, \kappa\in\bbR^+\right\}.
\end{equation}
It is not difficult to check that $\widetilde\Sigma$ is an
isotropic submanifold of $T^*(M\times S^1)$.  Conversely,
a conic isotropic submanifold $\widetilde\Sigma\subset T^*(M\times S^1)_+$,
where 
\[
T^*(M\times S^1)_+ := \{(t, u, \theta\;;\;\tau,\eta,\kappa)\in T^*(M\times S^1)\;;\; \kappa>0\},
\]
gives rise to an isotropic $\Sigma\subset T^*M$ by the process of reduction:
\[
\Sigma = \left(\widetilde\Sigma\cap\{\kappa=1\}\right)/S^1.
\]

The results of the previous section imply:

\begin{theorem}
Let $\Upsilon\in I(M\times S^1, \widetilde{\Sigma})$ be an Hermite
distribution in the sense of Boutet de Monvel,
\cite{BG}, and let
\[
\Upsilon(x,\theta) = \sum_m \Upsilon_m(x)\,e^{im\theta}
\]
be its Fourier series.  Then the family $\{\Upsilon_m\}$
is an isotropic state associated to $\Sigma$,
in the sense of Definition \ref{generaldef},
after the substitution $\h = 1/m$.  
\end{theorem}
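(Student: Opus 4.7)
The plan is to reduce this global statement to Theorem \ref{AlternativeApproach} by means of a microlocal partition of unity, with the Bohr-Sommerfeld condition playing the role of guaranteeing that the local phase ambiguities match up. First I would cover $\widetilde\Sigma$ by conic open sets $\widetilde V_\ell$ on each of which there exists a homogeneous canonical transformation
\[
\widetilde f_\ell: T^*(\calU_\ell\times\bbR)_+ \to T^*(M\times S^1)_+
\]
sending a relatively open subset of the model conic isotropic $\widetilde\Sigma_0$ onto $\widetilde V_\ell\cap\widetilde\Sigma$, with $\calU_\ell\subset\bbR^n$ open. By averaging over the respective $\bbR$- and $S^1$-actions on the extra factor one may assume each $\widetilde f_\ell$ is equivariant, hence quantized by an equivariant H\"ormander FIO $\widetilde F_\ell$. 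The correspondence of \cite{PU2} then presents $\widetilde F_\ell$ as a semiclassical FIO $F_\ell:C^\infty(\calU_\ell)\to C^\infty(M)$ quantizing the reduced canonical transformation $f_\ell$, which by construction carries $\Sigma_0\cap T^*\calU_\ell$ diffeomorphically onto a relatively open subset of $\Sigma$.

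Next, with $\{\chi_\ell\}$ a microlocal partition of unity subordinate to $\{\widetilde V_\ell\}$, write $\Upsilon = \sum_\ell\chi_\ell\Upsilon$ modulo smoothing. The FIO-covariance of Hermite distributions (\cite{BG}, \S 3, already used in the previous subsection) allows us to write $\chi_\ell\Upsilon = \widetilde F_\ell(\widetilde\Upsilon_\ell)$ modulo smoothing, with $\widetilde\Upsilon_\ell$ a Hermite distribution on $\calU_\ell\times\bbR$ associated to $\widetilde\Sigma_0$. Projecting onto the $m$-th Fourier mode in $\theta$ and using the $S^1$-equivariance of $\widetilde F_\ell$ to commute with this projection gives
\[
(\chi_\ell\Upsilon)_m = F_\ell\bigl((\widetilde\Upsilon_\ell)_m\bigr) + O(m^{-\infty}),
\]
where $F_\ell$ is evaluated at $\h = 1/m$. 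By the local content of Theorem \ref{AlternativeApproach}, $(\widetilde\Upsilon_\ell)_m$ — viewed as an $\h$-dependent function with $\h=1/m$ — lies in $e^{im\theta_{0,\ell}} I^r(\Sigma_0\cap T^*\calU_\ell)$ for some constant $\theta_{0,\ell}$. Summing over $\ell$ then exhibits $\Upsilon_m$ in the form demanded by Definition \ref{generaldef}.

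The main obstacle is the compatibility of the local phase constants $\theta_{0,\ell}$ on overlaps: on $\widetilde V_\ell\cap\widetilde V_{\ell'}$ two local trivializations produce a phase difference that is locally constant on $\Sigma$, and in general this would genuinely obstruct globalization. Here is where the Bohr-Sommerfeld hypothesis enters. The existence of a global $f:\Sigma\to S^1$ with $\iota^*(p\,dx) = \sqrt{-1}\,d\log f$ means that on each patch a local primitive $\psi_\ell$ of $\iota^*(p\,dx)$ agrees with $-\sqrt{-1}\log f$ modulo $2\pi\bbZ$, so that the discrepancies $\theta_{0,\ell}-\theta_{0,\ell'}\in 2\pi\bbZ$ and hence $e^{im(\theta_{0,\ell}-\theta_{0,\ell'})} = 1$ for every $m\in\bbZ$. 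This integer-quantization of the phase is the crux of the globalization. Finally, the identification of symbols stated in the remark following Theorem \ref{AlternativeApproach} passes through the FIOs $F_\ell$ and $\widetilde F_\ell$ by naturality, giving exactly the required symbol for $\{\Upsilon_m\}$ as an isotropic state on $\Sigma$.
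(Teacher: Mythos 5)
The paper does not actually give a proof of this theorem; it simply states that ``the results of the previous section imply'' it. Your proposal is the natural way to make that implication explicit, and it is essentially correct: cover $\widetilde\Sigma$ by conic charts, conjugate each piece of the Hermite distribution to the model by an equivariant homogeneous FIO, apply the Paul--Uribe dictionary to pass to semiclassical FIOs, invoke the local Theorem \ref{AlternativeApproach}, and sum. The identification of symbols follows by naturality exactly as you say.

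One comment on the phase discussion, which you present as ``the crux of the globalization.'' In fact the patch-to-patch constants $\theta_{0,\ell}$ pose no obstruction at all. In Definition \ref{generaldef} the requirement is only that $\chi_\ell(\Upsilon) = F_\ell(\Upsilon_\ell) + O(\h^\infty)$ for \emph{some} zeroth-order semiclassical FIO $F_\ell$; since multiplication by $e^{i\theta_{0,\ell}/\hbar}$ (a constant phase) is itself a zeroth-order FIO associated to the identity, it can simply be absorbed into $F_\ell$. There is no consistency condition on the $\theta_{0,\ell}$ to check and no need to argue they differ by elements of $2\pi\mathbb Z$. What the Bohr--Sommerfeld/exactness hypothesis really buys is the \emph{existence} of a global conic lift $\widetilde\Sigma$, which here is part of the hypotheses, so your argument closes without that extra step. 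Removing it makes the proof cleaner and matches the terseness of the paper's remark.

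A smaller point: the statement that one ``averages over the $\bbR$- and $S^1$-actions'' to make $\widetilde f_\ell$ equivariant is not quite an averaging procedure (the average of canonical transformations is not a canonical transformation); rather one constructs the conic canonical transformation equivariantly from the start using the homogeneity, and then the Paul--Uribe correspondence applies directly. This is a presentational issue, not a gap.
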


\section{Applications}

In this section we will briefly describe some applications of the theorems above. 
\subsection{{Propagation of coherent states}}

We begin with:
\begin{definition}
Let $X$ be a manifold, and $p_0 =(x_0,\xi_0)\in T^*X$.  By a {\em coherent 
state} centered at $p_0$ we will mean any element $\Upsilon\in I^0(\{p_0\})$, 
that is, any isotropic state associated to $\Sigma = \{p_0\}$.
\end{definition}

Now let $P$ be a self-adjoint
semiclassical pseudo-differential operator of order zero on $X$.
As an important example, if $X$ is a Riemannian manifold and $V: X \to \mathbb R$ a 
$C^\infty$ function,  we can take for $P$ the Schr\"odinger operator 
\begin{equation}\label{SchOp}
P (\psi)= \hbar^2 \Delta \psi + V\psi
\end{equation}
with $\Delta$ the Laplace-Beltrami operator.
Let us denote by $H(x,\xi):T^*X\to\bbR$ the symbol of $P$ (so that, in the example
$H(x,\xi) = \norm{\xi}^2+v(x)$), and let us assume that $H$ is proper.
For each function $\psi_0 \in C^\infty(X)$, 
let $\psi(x, t)$ be the solution of the Schr\"odinger
equation
\begin{equation}\label{SchEq}
i\hbar \frac{\partial \psi}{\partial t} = P(\psi)
\end{equation}
with initial condition $\psi(x, 0)=\psi_0$. Then, for each $t$, the map $\psi_0 \mapsto 
\psi(x, t)$ is a semiclassical Fourier integral operator 
\begin{equation}\label{4-13}
F_t: C^\infty(X) \to C^\infty(X )
\end{equation}
associated to the graph of the time $t$ map 
\[
\phi_t: T^*X\to T^*X
\]
of the Hamilton flow of $H$.  

\medskip
The first result on propagation of coherent states is the following:
\begin{theorem}
Let $\Upsilon$ be a coherent state centered at $p_0\in T^*X$.  Then, for each $t\in\bbR$,
$F_t(\Upsilon)$ is a coherent state, namely
$F_t(\Upsilon)\in I^0(\{\phi_t(p_0)\})$.  Moreover, the symbol of $F_t(\Upsilon)$
is the result of applying Mp$(g)$ to the symbol of $\Upsilon$, where
\[
g = d(\phi_t)_{p_0}: T_{p_0}T^*X \to T_{\phi_t(p_0)}T^*X.
\]
\end{theorem}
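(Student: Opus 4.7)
The plan is to recognize this as an immediate corollary of the machinery developed in \S 2 and \S 3, specialized to the case where the isotropic submanifold is a single point. The key observation is that a point $\{p_0\}\subset T^*X$ is (trivially) an isotropic submanifold of $T^*X$ of codimension $2n$ (where $n=\dim X$), and $\phi_t$ is a symplectomorphism of $T^*X$ that maps the one-point set $\{p_0\}$ onto the one-point set $\{\phi_t(p_0)\}$. Thus the hypotheses of the manifold version of Theorem \ref{MLT2} (as encoded in Definition \ref{generaldef}) apply to the FIO $F_t$.

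First I would verify that $F_t$ is a zeroth-order semiclassical Fourier integral operator associated to the graph of $\phi_t$. This is standard: since $P$ is self-adjoint of order zero with proper symbol $H$, the propagator $F_t=e^{-itP/\hbar}$ is a unitary semiclassical FIO quantizing the Hamilton flow $\phi_t$ of $H$. Applying Theorem \ref{MLT2} (or rather its incorporation into Definition \ref{generaldef} via localization and transport by model FIOs) to $\Upsilon\in I^0(\{p_0\})$ and the canonical transformation $\phi_t$, we conclude $F_t(\Upsilon)\in I^0(\{\phi_t(p_0)\})$. This gives the first assertion.

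For the symbol, I would invoke Definition \ref{GlobalSymbolDef} (or equivalently the local formula (\ref{ML24})). Since $\Sigma=\{p_0\}$ is zero-dimensional, $T_{p_0}\Sigma=0$ and therefore $(T_{p_0}\Sigma)^\circ = T_{p_0}(T^*X)$, so the symplectic normal space $\calN_{p_0}^{\{p_0\}}$ is canonically the full tangent space $T_{p_0}(T^*X)$, with its natural symplectic structure. The map induced on symplectic normals by the differential $(d\phi_t)_{p_0}$ is therefore just $(d\phi_t)_{p_0}$ itself, viewed as a linear symplectomorphism. Moreover, the half-form factor $\wedge^{1/2}T_s\Sigma$ is trivial (one-dimensional, canonically $\bbC$) and plays no role. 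Then (\ref{ML24}), lifted to the manifold setting by Definition \ref{GlobalSymbolDef}, gives precisely
\[
\sigma_{F_t(\Upsilon)}(\phi_t(p_0)) \;=\; \Mp(g)\bigl(\sigma_\Upsilon(p_0)\bigr),\qquad g = (d\phi_t)_{p_0},
\]
where $\Mp(g)$ is the metaplectic operator on the abstract quantization $\calH^{T_{p_0}(T^*X)}$ constructed in \S 3.1.

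The only point requiring any care is the lifting of $g$ to the metaplectic group, which we need in order to define $\Mp(g)$. Because $\phi_t$ is a continuous family of symplectomorphisms starting at the identity, $t\mapsto (d\phi_t)_{p_0}$ is a continuous path in $\Sp(T_{p_0}(T^*X))$ starting at the identity, hence admits a unique continuous lift to $\Mp$. This is the only genuinely nontrivial ingredient (and the anticipated obstacle), but it is standard and the resulting $\Mp(g)$ makes the transformation law unambiguous. The whole argument is an application of the symbol calculus already developed, specialized to the simplest possible isotropic.
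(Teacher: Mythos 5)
Your proposal is correct and takes essentially the same route as the paper: the paper's own proof is a one-line appeal to the global theory of \S 3.2 and Definition \ref{GlobalSymbolDef}, with the remark that the half-form factor is trivial because the isotropic is a point. You have simply unpacked that citation, and your observation that the symplectic normal space at $p_0$ is all of $T_{p_0}(T^*X)$ and that the path $t\mapsto (d\phi_t)_{p_0}$ lifts uniquely to $\Mp$ are exactly the checks the paper leaves implicit.
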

The proof is immediate, by the global theory of $\S 3.2$, in particular the Definition
\ref{GlobalSymbolDef} of the symbol on manifolds.  (Note that the symbols of coherent
states do not have a half-form component, since the isotropic is just a point.)

\bigskip
We now show that if $F:C^\infty(X)\to C^\infty(X\times\bbR)$ is the operator
\[
F(\psi_0) = \psi(x,t)
\]
and $\Upsilon$ is a coherent state, then $F(\Upsilon)$ is still an isotropic state on $X\times\bbR$.
This is {\em not} immediate from the results of \S 3, because the operator $F$ is a semi-classical
FIO associated to a canonical relation that is not a transformation.

\medskip
In fact we'll prove something slightly more general.
Let $X$ and $Y$ be manifolds, and let $\Gamma \subset T^*X  \times T^*Y$ be a canonical relation
(not necessarily the graph of a  symplectomorphism; in particular, we do not 
assume that $X$ and $Y$ have the same dimension).
Let $F: C^\infty(Y) \to C^\infty(X)$ be a semiclassical Fourier integral operator quantizing $\Gamma$. We will prove 
\begin{theorem}\label{thm4.1}
Let $p_0=(y_0, \eta_0) \in T^*Y$ be a regular value of the projection $\pi: \Gamma \to T^*Y$. Then $\pi^{-1}(p_0)=\Sigma$ is an isotropic submanifold of $T^*X$. Moreover, if $\psi_\hbar \in C^\infty(Y)$ is a coherent state centered at $p_0$, then 
\begin{equation}
F(\psi_\hbar) \in I(\Sigma).
\end{equation}
\end{theorem}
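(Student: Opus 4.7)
The plan splits into two parts. For the geometric part, I observe that $\Gamma$ is Lagrangian in $T^*X\times T^*Y$ with respect to the twisted form $\omega_X\oplus(-\omega_Y)$. At any $\gamma=(p,p_0)\in\pi^{-1}(p_0)$, the tangent space to the fiber is $\ker d\pi_\gamma=\{(v,0)\in T_\gamma\Gamma\}$, and for two such vectors $(v_1,0),(v_2,0)$ the Lagrangian condition gives $\omega_X(v_1,v_2)=0$. Thus the projection to $T^*X$ sends the fiber tangent space into an isotropic subspace of $T_pT^*X$. The regular value hypothesis makes $d\pi_\gamma$ surjective, so $\pi^{-1}(p_0)$ has dimension $\dim X-\dim Y$, and the projection to $T^*X$ is trivially injective on its tangent space. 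Hence $\Sigma$ is an immersed isotropic submanifold of codimension $\dim X+\dim Y$, consistent with Definition \ref{generaldef}.

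For the analytic part I would reduce to a model computation using the covariance of the classes $I^r(\cdot)$ under semiclassical FIOs (Theorem \ref{MLT2} together with Definition \ref{generaldef}). Composing with suitable FIOs on $X$ and on $Y$, I may assume $p_0$ is the origin of $T^*\bbR^{n_Y}$, that $\psi_\hbar$ is a model coherent state of the form $a(y/\sqrt\hbar,\hbar)$, and that $\Sigma$, in some local chart on $X$, coincides with the model isotropic $\Sigma_0$. The regular value hypothesis then lets me choose a local phase function $\varphi(x,y,\theta)$ parametrizing $\Gamma$ near $\pi^{-1}(p_0)$ in which the fiber over the origin is cut out by $y=0$ together with the critical equation $\partial_\theta\varphi=0$. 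Writing $F$ as the associated oscillatory integral and performing the substitution $y=\sqrt\hbar\,\tilde y$, the phase expands as $\varphi(x,0,\theta)/\hbar + \tilde y\cdot\partial_y\varphi(x,0,\theta)/\sqrt\hbar + O(1)$; a stationary phase analysis, analogous to the proofs of Theorems \ref{psiDOInv}--\ref{localTransportII}, exhibits $F(\psi_\hbar)$ as an element of $I^r(\Sigma_0)$ for an appropriate half-integer $r$.

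The main obstacle is producing this local normal form for $\Gamma$ near the fiber $\pi^{-1}(p_0)$ so that the resulting model calculation on the $X$-side cleanly yields the rescaled Schwartz form $a(t,u/\sqrt\hbar,\hbar)$ characterizing the model class. A more conceptual alternative that avoids constructing the normal form by hand is to invoke Theorem \ref{AlternativeApproach}: lift $\psi_\hbar$ to a homogeneous Hermite distribution on $Y\times S^1$ and $F$ to a homogeneous Fourier integral operator on $(X\times S^1)\times(Y\times S^1)$, apply the Boutet--Guillemin composition theorem for Hermite distributions under clean intersection (\cite{BG}, \S 6), and descend back to the semiclassical setting by restriction to the appropriate Fourier mode. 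The regular value hypothesis on $\pi$ is exactly the clean intersection condition required by that theorem, so the composition produces a Hermite distribution associated with the lifted fiber, which upon descent gives the desired element of $I(\Sigma)$.
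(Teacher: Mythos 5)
Your geometric argument that $\Sigma$ is isotropic is correct (the Lagrangian property of $\Gamma$ with respect to $\omega_X\oplus(-\omega_Y)$, restricted to $\ker d\pi$, gives exactly $\omega_X(v_1,v_2)=0$; the dimension count and immersion property also check out). Your overall analytic strategy — reduce by FIO covariance to a local model, substitute $y=\sqrt\hbar\,\tilde y$, and exhibit the result as a rescaled Schwartz form — is also the right idea and matches the spirit of the paper's proof of Theorems~\ref{psiDOInv}--\ref{localTransportII}.

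However, you explicitly flag the key technical difficulty (``the main obstacle is producing this local normal form for $\Gamma$'') and do not resolve it. The paper resolves precisely this point by a different and more effective reduction: rather than trying to bring $\Sigma$ into the model form $\Sigma_0$ or work with a phase $\varphi(x,y,\theta)$ carrying auxiliary variables $\theta$, the paper first proves (via a lemma stated to be analogous to Theorem~\ref{MLT1}) that one may pre- and post-compose $F$ with linear metaplectic FIOs so that $\Gamma$ becomes \emph{horizontal}, i.e.\ the graph of a map over $X\times Y$. This eliminates the auxiliary variables entirely, so $F$ is the oscillatory integral (\ref{horFIO}) with phase $\phi(x,y)$ only. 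The regular value hypothesis then normalizes $\phi$ to the form (\ref{4-8}), $\phi(x,y)=\phi_0(x)+\sum x_iy_i+\sum a_{ij}(x,y)y_iy_j$, after which the substitution $y=\sqrt\hbar\,\tilde y$ produces directly the expression (\ref{4-10})--(\ref{4-12}) — a model isotropic state times the harmless Lagrangian factor $e^{i\phi_0(x)/\hbar}$. Your expansion $\varphi(x,0,\theta)/\hbar+\tilde y\cdot\partial_y\varphi(x,0,\theta)/\sqrt\hbar+O(1)$ still carries a nonoscillatory-in-$\tilde y$ piece at order $\hbar^{-1}$ and requires a further stationary-phase argument in $\theta$ whose interaction with the $\hbar^{-1/2}$ term is exactly where the difficulty lies; the horizontality reduction bypasses this. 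Also, as a minor slip, the fiber $\pi^{-1}(p_0)$ in your phase-function description is cut out by $y=0$, $\partial_y\varphi(x,0,\theta)=0$ \emph{and} $\partial_\theta\varphi(x,0,\theta)=0$, not just the last two.

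Your suggested alternative — lift to homogeneous Hermite distributions on $Y\times S^1$ and $X\times S^1$, invoke the clean-intersection composition theorem of \cite{BG} \S 7, and descend — is a genuinely different, more conceptual route and is plausible (the regular value condition does give the required clean intersection when $\Sigma$ is a point). This is not the route the paper takes here, though it is consistent with \S 3.5; if you pursue it you would need to verify that the lifted FIO is of the $S^1$-equivariant type treated in \cite{PU2}, and that the resulting symbol identification matches Definition~\ref{GlobalSymbolDef}.
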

\begin{proof}
By a partition of unity argument we can assume that the Schwartz kernel of $F$ is supported on an open set $U \times V$, where $U$ and $V$ are coordinate patches in $Y$ and $X$, and hence we can assume without loss of generality that $Y=\mathbb R^m$ and $X=\mathbb R^n$. Less obviously we can also assume that $\Gamma$ is a horizontal submanifold of $T^*(X \times Y)$, i.e. that its projection onto $X \times Y$ is a bijection.  To see this we note:
\begin{lemma}
There exist linear symplectomorphisms, $A: T^*\mathbb R^n \to T^*\mathbb R^n$ and $B: T^*\mathbb R^m \to T^* \bbR^m$, such that $A F B$ is horizontal. Hence ``Theorem \ref{thm4.1} for $AFB$" implies Theorem \ref{thm4.1} for $F$".
\end{lemma}
(We will omit the proof of this since it is, more or less verbatim, the same as the proof of theorem \ref{MLT1} in the main lemma segment of \S 2.)

Thus we are reduced to proving theorem \ref{thm4.1} for F.I.O.'s of the form
\begin{equation}\label{horFIO}
F u(x) =\int a(x, y, \hbar) e^{\frac{i\phi(x, y)}{\hbar} }u(y)dy
\end{equation}
where $\phi$ is the defining function of $\Gamma$, i.e. 
\begin{equation}\label{phiDefGamma}
(x, \xi, y, \eta) \in \Gamma \Leftrightarrow \xi=\frac{\partial \phi}{\partial x} \mbox{\ and\ } \eta=-\frac{\partial \phi}{\partial y}.
\end{equation}

We can also assume without loss of generality that $p_0=(y_0, \eta_0)=(0, 0)$. Hence the transversality condition in theorem \ref{thm4.1} asserts that the equations 
\begin{equation}\label{4-5}
\frac{\partial \phi}{\partial y}(0, 0)=0,\quad \xi=\frac{\partial \phi}{\partial x}(x, 0)
\end{equation}
are a non-degenerate system of defining equations for $\Sigma$. In particular 
\begin{equation}\label{4-6}
d\frac{\partial \phi}{\partial y_i}(x, 0), \quad i=1, \cdots, m
\end{equation}
are linearly independent, and hence by a change of coordinates, we can assume 
\begin{equation}\label{4-7}
\frac{\partial \phi}{\partial y_i}(x, 0)=x_i, \quad i=1, \cdots, m
\end{equation}
and 
\begin{equation}\label{4-8}
\phi(x, y) = \phi_0(x)+ \sum_{i=1}^m x_i y_i + \sum a_{i,j}(x, y)y_i y_j.
\end{equation}
Thus if $\psi_{p_0}$ is the coherent state 
\begin{equation}\label{4-9}
\psi(\frac{y}{\hbar^{1/2}}), \quad \psi \in \mathcal S(\bbR^m),
\end{equation}
$F\psi_{p_0}$ is equal to 
\begin{equation}\label{4-10}
e^{\frac{i\phi_0(x)}{\hbar}} \tilde \psi(t, \frac{u}{\hbar^{1/2}}, \hbar),
\end{equation}
where $t=(x_{m+1}, \cdots, x_n)$, $u=(x_1, \cdots, x_m)$ and 
\begin{equation}\label{4-11}
\tilde \psi(t. u, \hbar) = \hbar^{n/2} \int e^{iu\cdot \mu} f_a(t, u, \mu, \hbar) d\mu
\end{equation}
with $f_a(t, u, y, \hbar)$ given by 
\begin{equation}\label{4-12}
a(t, \hbar^{1/2}u, \hbar^{1/2}\mu, \hbar)e^{i\sum a_{jk}(t,u, \hbar^{1/2}\mu)\mu_i \mu_j} \psi(\mu).
\end{equation}
Thus in particular, by (\ref{4-11}), $\tilde \psi(t, u, \hbar)$ is rapidly decreasing as a function of $u$ and hence (\ref{4-10}) is an isotropic state with microsupport on the set, $u=0$ and $\xi=\frac{\partial \phi_0}{\partial x}$. Note however that by (\ref{4-5}) this set is just the isotropic subset, $\Sigma=\pi^{-1}(p_0)$ of $T^*X$. 
\end{proof}

\begin{corollary}
Let $P$ be a semi-classical self-adjoint pseuodifferential operator on $X$ with proper symbol,
and
\begin{equation}\label{4-13}
F: C^\infty(X) \to C^\infty(X \times \bbR)
\end{equation}
the fundamental
solution of the Schr\"odinger equation (\ref{SchEq}).  Let $\Upsilon$ be a coherent state
centered at $p_0=(x_0,\xi_0)\in T^*X$.  Then $F(\Upsilon)\in I^0(\Sigma)$, where
\begin{equation}\label{4-17}
\Sigma = \{(x, \xi; t, \tau), \quad (x, \xi)=\phi_t(x_0, \xi_0), \, \tau=p(x_0, \xi_0)\},
\end{equation}
\end{corollary}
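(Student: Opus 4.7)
The plan is to deduce this corollary as a direct application of Theorem~\ref{thm4.1}, with $F$ viewed as a zeroth-order semiclassical Fourier integral operator from $C^\infty(X)$ to $C^\infty(X\times\bbR)$. First I would identify the canonical relation underlying $F$. By the standard WKB construction of the Schr\"odinger fundamental solution (valid globally in $t$ since $H$ is proper), $F$ is an FIO whose canonical relation is the ``space-time graph'' of the Hamiltonian flow,
\[
\Gamma = \left\{((x,\xi;\,t,\tau),\,(y,\eta))\in T^*(X\times\bbR)\times T^*X \;:\; (x,\xi)=\phi_t(y,\eta),\ \tau = H(y,\eta)\right\}.
\]

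Next I would verify the hypothesis of Theorem~\ref{thm4.1}. The projection $\pi:\Gamma\to T^*X$, $((x,\xi;t,\tau),(y,\eta))\mapsto (y,\eta)$, is a submersion everywhere: $\Gamma$ is smoothly parametrized by the coordinates $(y,\eta,t)$, and the derivative of $\pi$ in the $(y,\eta)$-directions alone surjects onto $T(T^*X)$. Hence every $p_0=(x_0,\xi_0)$ is a regular value, and its fiber is
\[
\pi^{-1}(p_0) = \left\{(\phi_t(x_0,\xi_0);\, t,\, H(x_0,\xi_0))\;:\;t\in\bbR\right\},
\]
which is exactly the one-dimensional isotropic $\Sigma$ of (\ref{4-17}) (with $p = H$); one-dimensionality makes it automatically isotropic. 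Theorem~\ref{thm4.1} then yields $F(\Upsilon)\in I(\Sigma)$.

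To pin down the order $I^0$, I would track $\h$-powers through the model calculation (\ref{4-10})--(\ref{4-12}) inside the proof of Theorem~\ref{thm4.1}: a coherent state at $p_0$ has the form $\psi(\h^{-1/2}u)\in I^0(\{p_0\})$, and the oscillatory integral defining $F\Upsilon$ produces, after the change of variables $\mu\mapsto \h^{-1/2}\mu$, an amplitude of the form $a(t,\h^{-1/2}u,\h)$ modulated by an overall phase $e^{i\phi_0/\h}$. This is precisely the normalization defining $I^0(\Sigma)$, once the phase is absorbed via a Bohr--Sommerfeld trivialization as in Theorem~\ref{AlternativeApproach}. I expect no serious obstacle: the construction of $\Gamma$ is classical, the submersion check is trivial (it needs only that $\phi_t$ exists for all $t$, which follows from properness of $H$), and the order tracking is routine; the only delicate point is consistent handling of the $e^{i\phi_0/\h}$ phase factor, which is harmless for membership in $I^r(\Sigma)$ by the remark following Theorem~\ref{AlternativeApproach}.
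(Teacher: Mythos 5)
Your argument follows the paper's own proof exactly: identify the canonical relation $\Gamma$ of the fundamental solution $F$, observe that $(x_0,\xi_0)$ is automatically a regular value of the projection $\pi:\Gamma\to T^*X$, and apply Theorem~\ref{thm4.1}. One small remark: the phase factor $e^{i\phi_0/\hbar}$ appearing in (\ref{4-10}) does not require the Bohr--Sommerfeld machinery of Theorem~\ref{AlternativeApproach} to be ``absorbed'' --- it is already accounted for by the invariance of the classes $I^r(\Sigma)$ under multiplication by $e^{i\phi/\hbar}$ (the operators $T_\phi$ of (\ref{ML13}), i.e.\ quantizations of the fiber-preserving symplectomorphisms $\gamma_\phi$), which is exactly how the paper's proof of Theorem~\ref{thm4.1} concludes that (\ref{4-10}) lies in $I(\Sigma)$ with $\Sigma$ the shifted isotropic $\{u=0,\ \xi = \partial\phi_0/\partial x\}$.
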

\begin{proof}
This follows from the previous theorem and the fact that the 
canonical relation, $\Gamma$, of the operator $F$ is defined by the condition:
\begin{equation}\label{4-14}
((x, \xi), (y, \eta), (t, \tau))  \in \Gamma
\end{equation}
if and only if
\begin{equation}\label{4-15}
(y, \eta) = \phi_t(x, \xi)\quad\text{and}\quad
\tau=p(x, \xi)=p(y, \eta). 
\end{equation}
\end{proof}

\medskip

\begin{remark}  Keeping the notation of the previous corollary,
suppose that the trajectory, $\gamma\subset T^*X$, of $p_0$ is periodic of period $T>0$.
Let $\rho\in C^\infty(\bbR)$ be $T$-periodic, and consider the push-forward
\[
u = \int_0^T F(\Upsilon)\, \rho(t)\, dt,
\]
where $\Upsilon$ is a coherent state centered at $p_0$.
We claim that one can show that $u\in I^{1/2}(\gamma)$.  Moreover,
if $\lambda =H(p_0)$, then $(P-\lambda)u\in I^{1}(\gamma)$, because the symbol of
$P-\lambda$ is zero on $\gamma$ (see Theorem \ref{symbolcalc0}).
But note that, since the Hamilton field of $H$ is tangent to $\gamma$,
by the first transport equation (Theorem \ref{symbolcalc1}) we in fact have that
$(P-\lambda)u\in I^{3/2}(\gamma)$.

In suitable situations one can 
construct quasi-modes $u\in I(\gamma)$ by symbolic methods, that is, non-trivial
isotropic states satisfying
$(P-\lambda)u\in I^r(\gamma)$ for all $r>0$.  By the discussion above, the first 
obstruction is that the symbol of $u$ should satisfy the characteristic equation
(equal zero) at some point on $\gamma$.
Then, by invariance with respect 
to the bicharacteristic flow, the symbol of $u$ satisfies the second transport equation 
at all points of this bicharacteristic
\end{remark}

\color{black}
\subsection{A result on the pseudospectrum}

The following theorem has a symbolic proof, and as an immediate corollary 
we obtain a result on the pseudospectrum of a non self-adjoint pseudodifferential operator.
The latter result was first proved in \cite{DSZ}, by other methods.
\begin{theorem}
Let $A$ be a semiclassical pseudodifferential operator on a manifold $M$ with
principal symbol $H: T^*M\to\bbC$.  Let $p\in T^*M$ be such that
\begin{equation}\label{pbassumption}
\PB{\Re(H)}{\Im(H)}(p) <0.
\end{equation}
Then there exists $\Upsilon\in I^{0}(\{p\})$ with non-zero symbol such that
\begin{equation}\label{}
\left( A-\lambda I\right)(\Upsilon) = O(\hbar^\infty),
\end{equation}
where $\lambda = H(p)$ and the asymptotics are in the $C^\infty$ topology.
\end{theorem}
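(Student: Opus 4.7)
The plan is to build $\Upsilon \in I^0(\{p\})$ as a Borel-summed asymptotic series $\Upsilon \sim \sum_{j\geq 0}\Upsilon_j$ with $\Upsilon_j \in I^{j/2}(\{p\})$, chosen so that $(A - \lambda I)\Upsilon = O(\h^\infty)$. Setting $B := A - \lambda I$, the principal symbol $b_0 := H - \lambda$ vanishes at $p$, so Theorem \ref{symbolcalc1} gives $B\Upsilon_0 \in I^{1/2}(\{p\})$ for any $\Upsilon_0 \in I^0(\{p\})$, with symbol at $p$ equal to $d\rho(\xi_p)\sigma_{\Upsilon_0}$, where $\xi_p := \Xi_{b_0}(p) \in T_pT^*M \otimes \bbC$ is the complex Hamilton vector field of $b_0$ at $p$, regarded as an element of the complexified Heisenberg Lie algebra of the symplectic normal $\calN^{\{p\}}_p = T_pT^*M$. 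The whole construction hinges on an analysis of this operator $d\rho(\xi_p)$ on $\calS(\bbR^n)$.

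The key structural fact is that hypothesis (\ref{pbassumption}) forces $d\rho(\xi_p)$ to be a (complex) annihilation operator with nontrivial Schwartz kernel. Writing $\xi_p = \xi_1 + i\xi_2$ with $\xi_1 = \Xi_{\Re H}(p)$ and $\xi_2 = \Xi_{\Im H}(p)$, the Heisenberg commutation relation yields
\[
[d\rho(\xi_1), d\rho(\xi_2)] = i\,\omega(\xi_1,\xi_2)\,I = i\,\{\Re H, \Im H\}(p)\,I,
\]
a negative-imaginary multiple of the identity, so $\xi_1, \xi_2$ span a symplectic $2$-plane in $T_pT^*M$. Completing it to a symplectic basis and using the Schr\"odinger model $\calH^{\calN_p^{\{p\}}} \cong L^2(\bbR^n)$ adapted to this first conjugate pair, $d\rho(\xi_p)$ becomes, up to a positive real constant, the annihilation operator $\partial_{x_1} + x_1$ acting trivially in $x_2, \ldots, x_n$. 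Its kernel in $\calS(\bbR^n)$ is infinite-dimensional (it contains $e^{-x_1^2/2}\otimes \psi$ for every $\psi \in \calS(\bbR^{n-1})$), and a Hermite-basis argument shows it is surjective onto $\calS(\bbR^n)$.

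I will then pick any nonzero $\sigma_0$ in this kernel, so that $B\Upsilon_0 \in I^1(\{p\})$, and construct the higher $\Upsilon_j$ inductively: having chosen $\Upsilon_0,\ldots,\Upsilon_{j-1}$ with $B(\Upsilon_0 + \cdots + \Upsilon_{j-1}) \in I^{(j+1)/2}(\{p\})$, I let $g_j \in \calS(\bbR^n)$ be the symbol of this error, solve $d\rho(\xi_p)\sigma_j = -g_j$ by the surjectivity above, and take $\Upsilon_j \in I^{j/2}(\{p\})$ with leading symbol $\sigma_j$. Borel-summing the $\Upsilon_j$ produces the required $\Upsilon$, with nonzero symbol $\sigma_0$. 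The main technical obstacle is the inductive step: computing $g_j$ requires tracking higher-order Taylor terms of $b_0$ at $p$, the subprincipal symbol of $A$, and the previously chosen $\sigma_i$ simultaneously, which amounts to a full $\h^{1/2}$-expansion of the model composition formula (\ref{PUpsilon}) in the spirit of the proofs of Theorems \ref{localTransportI} and \ref{localTransportII}. Once that expansion is organized, solvability of the recursion is automatic from the annihilation-operator surjectivity, and the symbol of $\Upsilon$ remains the chosen nonzero $\sigma_0$.
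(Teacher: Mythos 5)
Your proposal is correct and follows essentially the same route as the paper: reduce to the first transport equation at the point $p$, recognize $d\rho(\xi_p)$ as an annihilation operator on the symbol space because of the sign hypothesis on $\PB{\Re H}{\Im H}(p)$, pick a symbol in its kernel, and then iterate the surjectivity of $d\rho(\xi_p)$ to kill the error term order by order, finishing by Borel summation. The paper phrases the annihilation-operator fact by directly choosing a symplectic basis with $\xi = \epsilon e_1 + if_1$, $\epsilon>0$, and appeals to variation of parameters for surjectivity (citing Lemma~3.1 of \cite{BoU}); your version makes the same point via the commutator $[d\rho(\xi_1),d\rho(\xi_2)] = i\,\omega(\xi_1,\xi_2)\,I$ and a Hermite-basis argument, which is a presentational variant rather than a different proof.
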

\begin{proof}
If we let $P= A-\lambda I$, then for any $\Upsilon\in I^{0}(\{p\})$
 $P(\Upsilon)\in I^{1/2}(\{p\})$, because the symbol of $P$ 
vanishes at $p$.  By Theorem \ref{symbolcalc1}, the symbol of $P(\Upsilon)$ is
\begin{equation}\label{symbolOne}
\sigma_{P(\Upsilon)}=d\rho(\xi)(\sigma_\Upsilon),
\end{equation}
where $\xi\in T_p(T^*M)$ is the Hamilton field of $H$ evaluated at $p$, and $d\rho$ is the
infinitesimal Schr\"odinger representation of the Heisenberg group of $V:=T_p(T^*M)$.

\smallskip\noindent
{\sc Claim:}  {\em  Under the assumption (\ref{pbassumption}), the
 operator $d\rho(\xi): \calS(V)\to\calS(V)$ is onto and has a non-trivial
kernel.  Here $\calS(V)$ is the space of smooth vectors for the metaplectic representation
of $V$ (Schwartz functions).}

This claim is precisely Lemma 3.1 of \cite{BoU}, but we sketch the simple argument:  
It suffices to prove the statement in a model of the metaplectic representation, say
$\calS(V) =   \calS(\bbR^n)$ with the usual Schr\"odinger representation, after 
choosing a symplectic basis $(e_1,\ldots, e_n, f_1,\ldots,f_n)$ on $V\cong\bbR^{2n}$ so that
$\xi =\epsilon e_1 +i f_1$.  The condition on the sign of the Poisson bracket corresponds to:
$\epsilon >0$, and in this model
\begin{equation}\label{}
d\rho(\xi) = \calL = \frac{\partial\ }{\partial x_1} + \epsilon x_1.
\end{equation}
The kernel of this operator contains Schwartz functions (e.g. 
$e^{-(\epsilon x_1^2 + x_2^2+\cdots +x_n^2)/2}$), and by variation of parameters 
one can check that the solution of the
ODE $\calL u = f$, where $f$ is Schwartz, is Schwartz.

\medskip
With this claim at hand, choose the symbol of $\Upsilon$ so that (\ref{symbolOne}) is zero, and
denote by $\sigma_1$ the symbol of $P(\Upsilon)\in I^{1}(\{p\})$.

Next, let us look for $\gamma_1\in I^{1/2}(\{p\})$ so that
\begin{equation}\label{symbolTwo}
P(\Upsilon +\gamma_1)\in I^{3/2}(\{p\}).
\end{equation}
For any $\gamma_1\in I^{1/2}(\{p\})$ one has
$P(\gamma_1)\in I^{1}(\{p\})$ and
(\ref{symbolTwo}) will hold if $\gamma_1$ satisfies
$
\sigma_{P(\gamma_1)} = -\sigma_1,
$
which, once again by the first transport equation, amounts to
\begin{equation}\label{etc}
d\rho(\xi)(\sigma_{\gamma_1}) = -\sigma_1.
\end{equation}
By the previous Claim there is a solution to this problem, and we take $\gamma_1$ to have
it as its symbol.  This constructs $\Upsilon_1 = \Upsilon +\gamma_1$ such that
$P(\Upsilon_1)\in I^{3/2}(\{p\})$.

Now continue this process to all orders, at each step solving an inhomogeneous equation
of the form (\ref{etc}).
\end{proof}

In the situation of the previous Theorem, one can conclude that $\lambda$ is in 
{\em the semiclassical pseudospectrum of } $A$, since
\begin{equation}\label{}
\lim_{\h\to 0} \frac{\norm{(A-\lambda I)(\Upsilon)}}{\norm{\Upsilon}} = 0.
\end{equation}
This result on the pseudospectrum of $A$ 
was previously proved by Dencker, Sj\"ostrand and Zworski in \cite{DSZ}.

\subsection{Complex analytic examples of isotropic states}

Here we briefly indicate how to construct many examples of isotropic
states in the complex analytic category.

Consider a compact complex manifold $Z$, a holomorphic line bundle, 
$\mathbb L \to Z$, and a Hermitian inner product, $\langle \ , \ \rangle$, on 
$\mathbb L$ which is positive definite in the sense that the curvature form 
associated with the intrinsic metric connection on $\mathbb L$ is a K\"ahler 
form. Let $\mathbb L^*$ be the dual line bundle to $\mathbb L$. Then 
\[  
D(\mathbb L^*) = \{(z, v) \in \mathbb L^*, \langle v, v \rangle_z^* <1 \}
\]
is a strictly pseudoconvex domain.
Let $X = \partial D$ equipped with the volume form $\alpha \wedge 
(d\alpha)^{n-1}$, where $\alpha$ is the connection form.
$X$ is a circle bundle over $Z$.  We let
\begin{equation}\label{SzegoPro}
\Pi: L^2(X) \to H^2(X)
\end{equation}
be the Szeg\"o projector.  (Here $H^2(X)$ is the space of boundary values
of holomorphic functions on $D$.)  This projector was studied in,
for instance, \cite{BG} and \cite{BS}.  It is known that the
Schwartz kernel of $\Pi$ is an Hermite distribution associated to the 
conic isotropic
\begin{equation}\label{szegoIsotropic}
\{(x,x\;;\; r\alpha_x, -r\alpha_x)\in T^*X\times T^*X\;;\; x\in X,\ r>0\}.
\end{equation}

Now let $u$ be a Hermite (or lagrangian) distribution on $X$.  
If the isotropic submanifold of $u$ satisfies a ``clean intersection condition" with 
respect to (\ref{szegoIsotropic}), then 
$\Pi(u)$ is a Hermite distribution on $X$,  with respect to
a conic isotropic
\[ 
\wt\Sigma\subset \{(x, r\alpha_x)\;;\; x\in X,\ r>0\}.
\]
(For details on this composition theorem, see \S 7 in \cite{BG}.)
Furthermore, $\Pi(u)$ is 
in the generalized Hardy space, and we can decompose it as
\[
\Pi(u) = \sum_{m=1}^\infty u_m,
\]
with respect to the circle bundle action on $X$.
Specifically, for each $m$, $u_m\in \mathcal H_m$, 
where  $\mathcal H_m$ is the space of functions in $H^2(X)$ which transform under the action of $S^1$ by the character $e^{im\theta}$. 
(Note, by the way, that for each $m$, $u_m$ can be
interpreted as a holomorphic section of $\mathbb L^m\to Z$.) 

\medskip
The results of \S 3.5.2 immediately imply:
\begin{corollary}
Let $U\subset Z$ be an open set
such that the bundle $\pi: X\to Z$ is trivial over $U$,
and fix a trivialization $\pi^{-1}(U)\cong U\times S^1$.  In this trivialization,
let
\[
u_m(z,\theta) = \Upsilon_m(z)\, e^{im\theta}.
\]
Then the sequence $\{\Upsilon_m\}$ is an isotropic semiclassical state
on $U$, where $\h = 1/m$.
\end{corollary}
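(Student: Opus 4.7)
The plan is to apply, essentially verbatim, the theorem stated immediately before the corollary (on Fourier decompositions of Hermite distributions on $M\times S^1$), after transporting the setup to the local trivialization $\pi^{-1}(U)\cong U\times S^1$. All the substantive microlocal analysis has been done; what remains is a localization argument and a check of $S^1$-invariance.

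First I would recall that, by the composition theorem of \S 7 in \cite{BG} combined with the fact that the Szeg\"o kernel is a Hermite distribution associated to the conic isotropic (\ref{szegoIsotropic}), the clean intersection hypothesis on $u$ ensures that $\Pi(u)$ is a Hermite distribution on $X$ associated to a conic isotropic $\wt\Sigma\subset T^*X$ lying in the positive cone $\{(x,r\alpha_x):r>0\}$. Next I would observe that $\wt\Sigma$ is automatically $S^1$-invariant: the connection form $\alpha$ is preserved by the $S^1$-action so the positive cone is preserved, and for each $\theta\in S^1$ the rotate $T_\theta^*\Pi(u)=\sum_m e^{im\theta}u_m$ differs from $\Pi(u)$ only by multiplication by a smooth nowhere-vanishing function on each isotypic component, hence has the same semiclassical \wf.

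Now, using the trivialization $\pi^{-1}(U)\cong U\times S^1$, restrict $\Pi(u)$ to $\pi^{-1}(U)$. Because the Hermite class is microlocal, this restriction is a Hermite distribution on $U\times S^1$ associated to the $S^1$-invariant conic isotropic
\[
\wt\Sigma_U := \wt\Sigma\cap T^*(\pi^{-1}(U))\subset T^*(U\times S^1)_+.
\]
In the trivialization, $\alpha = d\theta+\pi^*\beta$ for a local potential $\beta$ of the curvature, so the condition $\xi=r\alpha_x$ with $r>0$ translates into $\kappa>0$ on $T^*(U\times S^1)$, confirming that $\wt\Sigma_U$ lives in the positive cone. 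The theorem stated just before the corollary now applies directly to the Fourier decomposition
\[
\Pi(u)(z,\theta)=\sum_{m>0}\Upsilon_m(z)\,e^{im\theta}
\]
(only positive modes appear, since modes with $m\le 0$ are forced to be smoothing by $\wt\Sigma_U\subset\{\kappa>0\}$), giving that $\{\Upsilon_m\}$ is an isotropic semiclassical state on $U$ associated to the reduced isotropic
\[
\Sigma_U = \bigl(\wt\Sigma_U\cap\{\kappa=1\}\bigr)\big/S^1\subset T^*U,
\]
under the substitution $\h=1/m$, which is the desired conclusion.

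I do not foresee a substantive obstacle. The only steps that require more than routine care are (i) verifying that microlocal restriction of a Hermite distribution to the open subset $\pi^{-1}(U)$ preserves both the class and the associated isotropic, and (ii) checking that $\wt\Sigma$ is $S^1$-invariant so that the reduction makes sense. Both are standard and addressed above, so in the end the statement reduces to a one-line invocation of the preceding theorem.
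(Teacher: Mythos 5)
Your overall approach is the same as the paper's: the paper treats the corollary as an immediate consequence of the preceding theorem (Fourier decomposition of Hermite distributions on $M\times S^1$), after localizing to the trivialization $\pi^{-1}(U)\cong U\times S^1$. Your localization step and the verification that $\wt\Sigma_U$ lies in the cone $\{\kappa>0\}$ (via $\alpha = d\theta + \pi^*\beta$) are both fine, and the observation that the nonpositive modes are smoothing is correct.

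However, the argument you give for $S^1$-invariance of $\wt\Sigma$ is flawed. You assert that $T_\theta^*\Pi(u)=\sum_m e^{im\theta}u_m$ ``differs from $\Pi(u)$ only by multiplication by a smooth nowhere-vanishing function on each isotypic component, hence has the same semiclassical wave-front set.'' Multiplying each Fourier mode by a different constant $e^{im\theta}$ is \emph{not} multiplication of the distribution $\Pi(u)$ by a single smooth function, so the conclusion about the wave-front set does not follow. In fact $\wt\Sigma$ need not be $S^1$-invariant in general: since $\Pi$ commutes with the $S^1$-action, $T_\theta^*\Pi(u) = \Pi(T_\theta^*u)$, whose isotropic is obtained from that of $\Pi(u)$ by rotating the isotropic of $u$; so $\wt\Sigma$ is $S^1$-invariant only if the isotropic of $u$ is. Fortunately this gap does not derail the proof: the reduction $\Sigma = \bigl(\wt\Sigma\cap\{\kappa=1\}\bigr)/S^1$ in the preceding theorem should be read, consistently with the local formulation in Theorem \ref{AlternativeApproach}, as the \emph{image} of $\wt\Sigma\cap\{\kappa=1\}$ under the reduction map to $T^*U$, which is well defined without any $S^1$-invariance hypothesis. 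So your care point (ii) can simply be dropped, and the rest of your argument is a correct elaboration of the paper's one-line proof.
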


Borthwick, Paul and Uribe considered in \cite{BPU}  the case when $u$
is Lagrangian and gave some applications.
We hope to provide details and applications of the case when $u$ is Hermite 
in a future paper.

%\begin{appendix}
%\section{Superpositions of coherent states}
% 
%\end{appendix}

\end{document}